\documentclass{article}
\usepackage{amsmath,amssymb}
\DeclareOldFontCommand{\sc}{\normalfont\scshape}{\mathrm}
\renewcommand{\Bbb}{\mathbb}
\newtheorem{thm}{Theorem}[section]
\newtheorem{lem}[thm]{Lemma}
\newtheorem{prop}[thm]{Proposition}
\newtheorem{cor}[thm]{Corollary}
\newtheorem{defn}[thm]{Definition}
\newenvironment{defn-new}{\begin{defn} \em}{\end{defn}}
\newtheorem{rem}[thm]{Remark}
\newenvironment{rem-new}{\begin{rem} \em}{\end{rem}}
\newtheorem{ex}[thm]{Example}
\newenvironment{ex-new}{\begin{ex} \em}{\end{ex}}
\newtheorem{prob}[thm]{Problem}
\newenvironment{prob-new}{\begin{prob} \em}{\end{prob}}

\newenvironment{notation-new}{\begin{rem} \em}{\end{rem}}
\newtheorem{agr}[thm]{Agreement}
\newenvironment{agr-new}{\begin{agr} \em}{\end{agr}}

\makeatletter \@addtoreset{equation}{section} \makeatother

\begin{document}

\begin{center}
{\Large {\bf {Newman--Penrose formalism in $3$-dimensional trans-Sasakian
manifolds}}}
\end{center}

\begin{center}
{\large Prachi$^{1}$, Marie-Am\'{e}lie Lawn$^{2}$ and Mukut Mani
Tripathi$^{3}$}
\end{center}

\begin{center}
$^{1}$Department of Mathematics,\\
Institute of Science, \\
Banaras Hindu University,\\
Varanasi 221005, India\\
E-mail: prachisinghsvc203131@gmail.com\\
ORCID: 0009-0006-0218-8109

\vspace{0.3cm}

$^{2}$Department of Mathematics,\\
Faculty of Natural Sciences, \\
Imperial College London, London,\\
180 Queen's Gate, London SW72AZ, UK\\
E-mail: m.lawn@imperial.ac.uk\\
ORCID: 0000-0003-2691-0195

\vspace{0.3cm}

$^{3}$Department of Mathematics,\\
Institute of Science, \\
Banaras Hindu University,\\
Varanasi 221005, India\\
E-mail: mmtripathi66@yahoo.com\\
ORCID: 0000-0002-6113-039X
\end{center}

\bigskip

\begin{quote}
{\bf Abstract.} We study $3$-dimensional trans-Sasakian manifolds using the Newman--Penrose formalism. In this framework, the geometry of the structure vector field is encoded by scalar spin coefficients: acceleration, shear, expansion, and twist. A central observation is that, in dimension $3$, the trans-Sasakian condition is equivalent to the characteristic vector field defining a shear-free geodesic congruence, or equivalently a conformal foliation by geodesics. Thus, the Newman--Penrose equations provide a direct scalar formulation of the conformal foliations studied by Baird and Wood in the theory of harmonic morphisms. Within this framework, we derive curvature and Laplacian identities for trans-Sasakian manifolds and their main subclasses, including formulae for the Ricci tensor, scalar curvature, Einstein condition, rough Laplacian, divergence and harmonicity of the characteristic vector field, together with several illustrative examples. As an application, we consider trans-Sasakian structures compatible with fixed homogeneous metrics of type ${\Bbb E}(\kappa,\tau)$. We prove a rigidity result: in the non-space-form cases, the Newman--Penrose equations force the characteristic vector field to be vertical. Hence, for $\tau\neq0$ and $\kappa\neq4\tau^2$, every compatible trans-Sasakian structure is the canonical vertical $\alpha$-Sasakian structure, while for $\tau=0$ and $\kappa\neq0$, it is vertical and cosymplectic. In particular, these non-space-form homogeneous metrics admit no proper compatible trans-Sasakian structures.
\end{quote}

\bigskip

\noindent {\em Keywords\/}{\rm :} Newman--Penrose formalism; spin coefficients;
trans-Sasakian manifold; cosymplectic manifold; Sasakian manifold; Kenmotsu
manifold; rough Laplacian; Einstein manifold; homogeneous manifolds

\bigskip

\noindent {\em $2020$ Mathematics Subject Classification\/}{\rm :}
Primary 53C25; Secondary 53C21, 83C60

\bigskip


\section{Introduction}

The Newman--Penrose formalism, introduced by Ezra T. Newman and Roger Penrose
\cite[Newman and Penrose 1962]{Newman-Penrose-62-JMP}, provides a powerful
framework in general relativity for studying spacetime geometry through a
complex null tetrad. Its essential feature is the replacement of the tensorial
description of the Levi--Civita connection by complex scalar invariants, the
spin coefficients, which encode the geometry of the chosen frame. In subsequent
work \cite[Newman and Penrose 1965]{Newman-Penrose-65-PRL}, Newman and Penrose
derived five complex, equivalently ten real, constants of motion for Bondi-type
solutions of the Einstein field equations. This programme was further extended
in \cite[Exton {\it et al.} 1969]{Exton-Newman-Penrose-69-JMP} to
Einstein--Maxwell spacetimes.

A refinement of the Newman--Penrose formalism was later developed by Geroch
{\it et al.} \cite[Geroch {\it et al.} 1973]{Geroch-Held-Penrose-73-JMP}.
Their approach keeps only a pair of null directions, together with spin- and
boost-weighted quantities and differential operators preserving these weights.
This gives a more invariant version of the spin-coefficient formalism, lying
between the original Newman--Penrose formalism and a fully covariant tensorial
description.

The Newman--Penrose formalism has also been adapted to Riemannian
three-manifolds. Aazami \cite[Aazami 2015]{Aazami-15-JGP} introduced the
formalism in this setting, and Matsuno \cite[Matsuno 2025]{Matsuno-25-arXiv}
recently applied it to $3$-dimensional almost contact metric manifolds,
characterizing several standard classes, including contact, Sasakian,
cosymplectic, Kenmotsu, trans-Sasakian, and $(\kappa,\mu)$-contact metric
manifolds. In the original Lorentzian setting, the Newman--Penrose formalism is
particularly well suited to the study of shear-free geodesic congruences. In
dimension three, the same language is natural for almost contact metric
geometry: normal almost contact metric structures are characterized by the
property that the structure vector field generates a shear-free geodesic
congruence. Thus trans-Sasakian geometry can be studied effectively through the
twist, expansion, and shear of this congruence.

It is well known that every orientable Riemannian $3$-manifold admits almost
contact metric structures compatible with the given metric; see
\cite[Cabrerizo {\em et al.} 2009]{Cab-FG-09}. Hence, once $g$ and an orientation are fixed, compatible
almost contact metric structures are abundant. The geometry becomes restrictive
only after imposing additional conditions, such as normality, the contact
metric condition, the Sasakian or trans-Sasakian equations, homogeneity, or
compatibility with a prescribed fibration.

In this paper we study the trans-Sasakian condition for the homogeneous metrics
of the spaces ${\Bbb E}(\kappa,\tau)$. Thurston's eight simply connected model
geometries in dimension $3$ are (see \cite[Thurston 1997, pp.\ 179-190]{Thurston-97})
\[
{\Bbb R}^3,\qquad {\Bbb S}^3,\qquad {\Bbb H}^3,\qquad
{\Bbb S}^2\times{\Bbb R},\qquad {\Bbb H}^2\times{\Bbb R},\qquad
{\rm Nil}_3,\qquad
\widetilde{{\rm SL}}(2,{\Bbb R}),\qquad
{\rm Sol}_3 .
\]
Here ${\Bbb R}^3$, ${\Bbb S}^3$, and ${\Bbb H}^3$ are the space-form
geometries; ${\Bbb S}^2\times{\Bbb R}$ and ${\Bbb H}^2\times{\Bbb R}$ are
product geometries; ${\rm Nil}_3$ and $\widetilde{{\rm SL}}(2,{\Bbb R})$ are
non-trivial homogeneous fibrations over surfaces; and ${\rm Sol}_3$ is the
remaining solvable geometry. The family ${\Bbb E}(\kappa,\tau)$ contains the
product geometries when $\tau=0$, and for $\tau\neq0$ includes ${\rm Nil}_3$,
Berger-sphere metrics, and $\widetilde{{\rm SL}}(2,{\Bbb R})$, with the
spherical space-form case corresponding to the exceptional values
$\kappa=4\tau^2$, and the flat case to $\kappa=\tau=0$.

By a trans-Sasakian structure compatible with a metric of type ${\Bbb
E}(\kappa,\tau)$, we mean an almost contact metric structure
$(\varphi,\xi,\eta,g)$ satisfying the trans-Sasakian equation, where the metric
$g$ is fixed to be the prescribed homogeneous metric. This is not the same as
classifying homogeneous almost contact metric or trans-Sasakian structures,
where $(\varphi,\xi,\eta,g)$ itself is assumed to be invariant under a chosen
transitive group action. Here no homogeneity assumption is imposed on
$\varphi$, $\xi$, $\eta$, or on the functions $\alpha$ and $\beta$. The problem
is therefore a fixed-metric rigidity problem: given a homogeneous metric,
determine whether any compatible trans-Sasakian structure can occur,
homogeneous or not.

Our main rigidity result is that, in the non-space-form ${\Bbb E}(\kappa,\tau)$
cases, the trans-Sasakian equations force the structure vector field to be the
canonical vertical field. More precisely, if $\tau\neq0$ and
$\kappa\neq4\tau^2$, then every compatible trans-Sasakian structure is vertical
and hence reduces to the canonical $\alpha$-Sasakian case. If $\tau=0$ and
$\kappa\neq0$, corresponding to $M^3=M^2_\kappa\times{\Bbb R}$, then every
compatible trans-Sasakian structure is again vertical and hence cosymplectic.
In particular, no proper trans-Sasakian structures are compatible with these
fixed homogeneous metrics.

A second important theme of the paper is the precise equivalence, in dimension
three, between compatible trans-Sasakian structures, shear-free geodesic
congruences, and conformal foliations by geodesics. We express this equivalence
in Newman--Penrose spin coefficients and thereby relate trans-Sasakian geometry
to the conformal foliations considered by Baird and Wood in the theory of
harmonic morphisms. From this viewpoint, the trans-Sasakian equation becomes a
scalar system governing the congruence generated by the structure vector field.
This equivalence gives a practical advantage beyond terminology.  The
Baird--Wood theory studies conformal foliations by geodesics through harmonic
morphisms and quotient foliations, whereas the Newman--Penrose formalism
detects the same condition directly through spin coefficients: in dimension
three a unit vector field generates a conformal foliation by geodesics if and
only if
\[
\kappa_{\mathrm{np}}=0,\qquad \sigma_{\mathrm{np}}=0.
\]
Thus the existence problem becomes a system of local scalar differential
equations attached to a congruence.  For the homogeneous metrics ${\Bbb
E}(\kappa,\tau)$, this makes the rigidity mechanism especially transparent:
starting from an arbitrary compatible structure, the Newman--Penrose equations
themselves force the structure vector field to be vertical. We expect the same
approach to be useful for other fixed $3$-dimensional metrics, such as
left-invariant metrics, cohomogeneity-one metrics and warped product
geometries, where the connection coefficients are explicit and the equations
$\kappa_{\mathrm{np}}=\sigma_{\mathrm{np}}=0$ give a direct test for conformal
geodesic foliations.

The paper is organized as follows. In Section~\ref{Sect-Prel}, we review
trans-Sasakian geometry and the Newman--Penrose formalism. In
Section~\ref{Sect-tsm-equiva-NP}, we prove the equivalence between
trans-Sasakian structures, shear-free geodesic congruences, and conformal
foliations by geodesics, and formulate it in Newman--Penrose notation. In
Section~\ref{Sect-NP-cur}, we derive curvature formulae in the Newman--Penrose
framework, including expressions for the Ricci tensor, the scalar curvature,
the Riemann curvature tensor, and the Einstein condition for trans-Sasakian
manifolds and their main subclasses, together with several illustrative examples. In Section~\ref{Sect-NP-RL}, we study the
rough Laplacian and divergence of the structure vector field, including the
conditions for parallelity, harmonicity, pointwise collinearity with the rough
Laplacian, and divergence-freeness. Finally, in Section~\ref{Sect-NP-homo-met},
we apply the formalism to the homogeneous metrics ${\Bbb E}(\kappa,\tau)$ and
prove the rigidity results described above.

\section{Preliminaries\label{Sect-Prel}}

\subsection{Trans-Sasakian manifolds}

Let $M$ be a $(2n+1)$-dimensional smooth manifold. An almost contact metric
structure $(\varphi,\xi,\eta,g)$ on $M$ consists of a $(1,1)$-tensor field
$\varphi$, a vector field $\xi$, a $1$-form $\eta$ and a compatible Riemannian
metric $g$ satisfying
\[
{\varphi}^{2} =  -I+\eta \otimes \xi, \quad \eta(\xi) = 1,\quad \varphi \xi =
0, \quad \eta \circ \varphi = 0,
\]
\[
g\left(\varphi {X},\varphi {Y}\right) = g\left(X,Y\right) - \eta \left(X\right)
\eta \left(Y\right)
\]
for all vector fields $X$, $Y$ on $M$. Then $(M,\varphi,\xi,\eta,g)$ is called
an almost contact metric manifold. For details, we refer to \cite[Blair
2010]{Blair-10-Book}.

An almost contact metric structure $(\varphi,\xi,\eta,g)$ on a manifold $M$ is
called a trans-Sasakian structure \cite[Oubina 1985]{Oubina-85-PMD} if the
product manifold $(M\times {\Bbb R},J,G)$ belongs to the class ${\cal W}_{4}$
defined in \cite[Gray and Hervella 1980]{Gray-Hervella-80-AMPA}, where $J$ is
the almost complex structure on $M\times {\Bbb R}$ given by
\[
J\left(X,f\frac{d}{dt}\right) = \left(\varphi{X}-f\xi,\eta (X)\frac{d}{dt}\right)
\]
for every vector field $X$ on $M$ and every smooth function $f$ on
$M\times{\Bbb R}$, and $G$ denotes the product metric on $M\times {\Bbb R}$. It
is well-known that an almost contact metric structure $(\varphi,\xi,\eta,g)$ on
a manifold $M$ is a trans-Sasakian structure if and only if \cite[Blair and
Oubina 1990]{Blair-Oubina-90-PM}
\begin{equation}
\left(\nabla_{X} \varphi \right) Y=\alpha \left(g\left(X,Y\right) \xi -
\eta\left(Y\right) X\right) + \beta\left(g\left(\varphi{X},{Y}\right)\xi -
\eta\left(Y\right) \varphi{X}\right)  \label{eq-tr-sas-def-a}
\end{equation}
for some smooth functions $\alpha$ and $\beta$ on $M$, where $\nabla$ denotes
the Levi-Civita connection of $g$. An almost contact metric structure
$(\varphi,\xi,\eta,g)$ satisfying (\ref{eq-tr-sas-def-a}) is called a
trans-Sasakian structure of type $(\alpha,\beta)$ \cite[Blair and Oubina
1990]{Blair-Oubina-90-PM}.

An $(\alpha,\beta)$ trans-Sasakian structure reduces to
\begin{itemize}
\item[{\bf (a)}] a ${\cal C}_{6}$-structure (\cite[Chinea and Gonzalez
1990]{Chinea-Gonzalez-90-AMPA}, \cite[Marrero 1992]{Marrero-92-AMPA}) if
$\beta=0$,

\item[{\bf (b)}] an $\alpha$-Sasakian structure \cite[Janssens and Vanhecke
1981, Theorem~2.6]{Vanh-Jan-81-Kodai} if $\beta=0\neq \alpha =$ constant,

\item[{\bf (c)}] a Sasakian structure (\cite[Sasaki and Hatakeyama
1962]{Sasaki-Hatakeyama-62-JMSJ}, \cite[Blair 2010, p.~86]{Blair-10-Book}) if
$\alpha=1$, $\beta=0$,

\item[{\bf (d)}] a ${\cal C}_{5}$-structure (\cite[Chinea and Gonzalez
1990]{Chinea-Gonzalez-90-AMPA}, \cite[Marrero 1992]{Marrero-92-AMPA}) if
$\alpha=0$,

\item[{\bf (e)}] a $\beta$-Kenmotsu structure \cite[Janssens and Vanhecke
1981, Theorem~2.8]{Vanh-Jan-81-Kodai} if $\alpha=0\neq \beta =$ constant,

\item[{\bf (f)}] a Kenmotsu structure \cite[Kenmotsu 1972]{Kenmotsu-72-Tohoku}
if $\alpha=0$, $\beta=1$,

\item[{\bf (g)}] a cosymplectic structure \cite[Blair 1967]{Blair-67-JDG} if
$\alpha =0=\beta$.
\end{itemize}

A trans-Sasakian manifold $(M,\varphi,\xi,\eta,g)$ satisfies the following
properties \cite[De and Tripathi 2003]{DeUC-Tri-03-Kyung}:
\begin{eqnarray}
R\left(X,Y\right) \xi &=&\left(\alpha^{2} - \beta^{2}\right)
\left(\eta \left(Y\right) X -\eta \left(X\right) Y\right)
+2\alpha \beta \left(\eta \left(Y\right) \varphi {X}-\eta \left(X\right)
\varphi {Y}\right)  \nonumber \\ &&+{\rm d}\alpha \left(Y\right) \varphi {X}
-{\rm d}\alpha \left(X\right)\varphi {Y} + {\rm d}\beta \left(Y\right)
{\varphi}^{2}X-{\rm d}\beta \left(X\right) {\varphi}^{2}Y, \label{eq-tr-sasa-cur-ope}
\end{eqnarray}
\begin{equation}
S\left(X,\xi \right) = \left(2n\left(\alpha^{2}- \beta^{2}\right) -{\rm d}\beta
\left(\xi \right) \right) \eta\left(X\right) -\left(2n-1\right) {\rm d}\beta
\left(X\right) -{\rm d}\alpha \left(\varphi X\right),  \label{eq-tsm-S(X,xi)}
\end{equation}
\begin{equation}
2\alpha \beta +{\rm d}\alpha  \left(\xi \right)=0,
\label{eq-condition-alpha-beta-xi}
\end{equation}
\begin{equation}
\nabla_{X}\xi =-\alpha \varphi {X}+\beta \left(X-\eta\left(X\right) \xi\right),
\label{eq-nabla_X(xi)}
\end{equation}
\begin{equation}
\left(\nabla_{X}\eta \right) Y = \alpha g\left(X,\varphi {Y}\right)
+\beta\left(g\left(X,Y\right) - \eta \left(X\right) \eta \left(Y\right) \right)
\label{eq-nabla_X(eta)}
\end{equation}
for all vector fields $X$, $Y$ on $M$, where $R$ denotes the Riemann curvature
tensor and $S$ denotes the Ricci tensor. It is known that any trans-Sasakian
manifold of dimension $\geq 5$ must be either cosymplectic, or an
$\alpha$-Sasakian or a $\beta$-Kenmotsu manifold \cite[Marrero
1992]{Marrero-92-AMPA}. Consequently, proper trans-Sasakian manifolds are
always of dimension $3$.

\subsection{Newman--Penrose formalism}

In this subsection, we briefly review the Newman--Penrose formalism adapted to
$3$-dimensional Riemannian manifolds, and follow the terminology and definition
introduced in \cite[Aazami 2015]{Aazami-15-JGP} and \cite[Matsuno
2025]{Matsuno-25-arXiv}.

Let $(M,g)$ be an orientable $3$-dimensional Riemannian manifold. Since every
orientable $3$-dimensional Riemannian manifold is parallelizable, one can
choose a global orthonormal frame $(e_{1},e_{2},\xi)$. By introducing the
complex vector fields
\begin{equation}
\partial =\frac{1}{\sqrt{2}}(e_{1}-ie_{2}), \qquad
\overline{\partial}=\frac{1}{\sqrt{2}}(e_{1}+ie_{2}),
\label{eq-partial-partial-bar}
\end{equation}
which are orthogonal to $\xi$, we obtain a complex frame
$(\partial,\overline{\partial},\xi)$. This complex frame will be called the
Newman--Penrose frame in dimension $3$. With respect to this frame, the
Riemannian metric $g$ satisfies
\begin{equation}
g(\xi,\xi) = g(\partial,\overline{\partial}) = 1, \qquad
g(\xi,\partial)=g(\partial,\partial)=g(\overline{\partial},\overline{\partial})
= 0. \label{eq-NPF-metric}
\end{equation}
Let $(\theta^{1},\theta^{2},\eta)$ denote the dual to the orthonormal frame
$(e_{1},e_{2},\xi)$. Defining the complex $1$-forms
\begin{equation}
\wp = \frac{1}{\sqrt{2}}(\theta^{1}+i\theta^{2}),\qquad
\overline{\wp}=\frac{1}{\sqrt{2}}(\theta^{1}-i\theta^{2}), \label{eq-mu-mu-bar}
\end{equation}
it follows that the complex dual frame corresponding to the complex frame
$(\partial,\overline{\partial},\xi)$ is given by $(\wp,\overline{\wp},\eta)$,
so that $\wp(\partial)=1$ and
\begin{equation}
\eta \left(\xi \right) = 1, \qquad \eta \left(\partial \right)=\eta
\left(\overline{\partial}\right) =0.  \label{eq-eta-xi-partial}
\end{equation}
With respect to the frame $(\partial,\overline{\partial},\xi)$, the spin
coefficients are defined as follows:
\begin{equation}
\sigma_{\sc np} = -g(\partial,\nabla_{\partial}\xi), \quad \rho_{\sc np} =
g(\partial,\nabla_{\overline{\partial}}\xi),\quad \kappa_{\sc np}
=-g(\partial,\nabla_{\xi}\xi), \label{eq-spin-coe-1}
\end{equation}
\begin{equation}
\beta_{\sc np} =g(\overline{\partial},\nabla_{\partial}\partial),
\quad\epsilon_{\sc np} =g(\overline{\partial},\nabla_{\xi}\partial).
\label{eq-spin-coe-2}
\end{equation}
The coefficient $\sigma_{\sc np}$ is referred to as the complex shear of $\xi$.
If $\kappa_{\sc np}=0$, then the integral curves of $\xi$ are geodesic. The
coefficient $\epsilon_{\sc np}$ is purely imaginary; although one can always
choose a local frame such that $\epsilon_{\sc np}=0$, such a frame may fail to
exist globally. The coefficient $\beta_{\sc np}$ is related to the sectional
curvature of the distribution orthogonal to $\xi$. In the homogeneous case, it
is frequently possible to select a frame for which $\beta_{\sc np}=0$. We now
express
\begin{equation}
\rho_{\sc np}=\Theta_{\sc np}+i\omega_{\sc np}, \label{eq-rho}
\end{equation}
where the real part $\Theta_{\sc np}$ is called the expansion of $\xi$, and the
imaginary part $\omega_{\sc np}$ is known as the twist of $\xi$. More
precisely, for the congruence generated by $\xi$, the expansion $\Theta_{\sc
np}$ measures the rate of change of the area of a cross-section orthogonal to
$\xi$, the twist $\omega_{\sc np}$ describes the angular velocity of such a
cross-section, and the shear $\sigma_{\sc np}$ describes its shear deformation.
Equivalently, these quantities may be interpreted as the coefficients appearing
in the corresponding covariant derivatives and Lie brackets given as follows:
\begin{equation}
\nabla_{\partial}\partial=\beta_{\sc np}\partial+\sigma_{\sc np}\xi,
\label{eq-PNF-nabla(partial,partial)}
\end{equation}
\begin{equation}
\nabla_{\partial}\overline{\partial} = -\beta_{\sc np}\overline{\partial}
-\overline{\rho_{\sc np}}\xi,  \label{eq-PNF-nabla(partial,barpartial)}
\end{equation}
\begin{equation}
\nabla_{\partial}\xi = \overline{\rho_{\sc np}}\partial -\sigma_{\sc np}
\overline{\partial}, \label{eq-PNF-nabla(partial,xi)}
\end{equation}
\begin{equation}
\nabla_{\xi}\partial = \epsilon_{\sc np} \partial+\kappa_{\sc np} \xi,
\label{eq-PNF-nabla(xi,partial)}
\end{equation}
\begin{equation}
\nabla_{\xi}\xi =-\overline{\kappa_{\sc np}}\partial -\kappa_{\sc np}
\overline{\partial}, \label{eq-PNF-nabla(xi,xi)}
\end{equation}
\begin{equation}
[\partial,\overline{\partial}] = \overline{\beta_{\sc np}}\partial -\beta_{\sc
np} \overline{\partial}+(\rho_{\sc np} -\overline{\rho_{\sc np}})\xi,
\label{eq-PNF-Lie(partial),barpartial}
\end{equation}
\begin{equation}
[\partial,\xi]= -(\epsilon_{\sc np} - \overline{\rho_{\sc np}})\partial
-\sigma_{\sc np} \overline{\partial}-\kappa_{\sc np} \xi .
\label{eq-PNF-Lie(xi,partial)}
\end{equation}

The choice of the complex frame $(\partial,\overline{\partial},\xi)$ is not
unique, and both local and global gauge transformations are possible. For
simplicity, we restrict our attention to global gauge transformations, noting
that the same considerations apply in the local setting. Consider
$e^{i\theta}:M\rightarrow {\Bbb S}^{1}$ to be a smooth map. A gauge
transformation is then given by $\partial \rightarrow
\partial^{\prime}=e^{i\theta}\partial$. Under this gauge transformation, the
spin coefficients are given by
\begin{equation}
\sigma_{\sc np}^{\prime}=e^{2i\theta}\sigma_{\sc np}, \quad \rho_{\sc
np}^{\prime}=\rho_{\sc np}, \quad\kappa_{\sc
np}^{\prime}=e^{i\theta}\kappa_{\sc np},  \label{eq-NPF-gauge-spin-coe-1}
\end{equation}
\begin{equation}
\beta_{\sc np}^{\prime} = e^{i\theta}(\beta_{\sc np} +i\partial
\theta),\quad\epsilon_{\sc np}^{\prime}=\epsilon_{\sc np} +i\xi (\theta).
\label{eq-NPF-gauge-spin-coe-2}
\end{equation}
It follows from the above transformation, the value of $\rho_{\sc np}$ is
well-defined. In contrast, for $\kappa_{\sc np}$ and $\sigma_{\sc np}$, only
their absolute values $|\kappa_{\sc np} |$ and $|\sigma_{\sc np} |$, are
well-defined. A quantity $q$ is said to have spin weight $s$ if it transforms
as $q^{\prime}=e^{is\theta}q$ under a gauge transformation. The spin weights of
$\kappa_{\sc np}$, $\rho_{\sc np}$, and $\sigma_{\sc np}$ are $1$, $0$, and
$2$, respectively.

In general, we must be careful when differentiating spin-weighted quantities,
because the derivatives $\partial(q)$, $\overline{\partial}(q)$, and $\xi(q)$
do not necessarily remain spin-weighted. However, it is possible to define
suitable operators that preserve the spin weight, as described below
\cite[Geroch {\em et al.} 1973]{Geroch-Held-Penrose-73-JMP}:
\begin{equation}
\eth (q)=\partial (q)-sq\beta_{\sc np},  \label{eq-spin-weight2}
\end{equation}
\begin{equation}
\overline{\eth}(q)=\overline{\partial}(q)+sq\overline{\beta_{\sc np}},
\label{eq-spin-weight3}
\end{equation}
\begin{equation}
P(q)=\xi (q)-s\epsilon_{\sc np} q,  \label{eq-spin-weight1}
\end{equation}
where $s$ is the spin weight of $q$. The spin-weighted quantities $\eth(q)$,
$\overline{\eth}(q)$, and $P(q)$ are of spin weights $s+1$, $s-1$ and $s$,
respectively \cite[Matsuno 2025]{Matsuno-25-arXiv}.

The spin coefficients are not mutually independent. They satisfy certain
compatibility relations, known as the generalized Sachs equations, which are
given as follows \cite[Aazami 2015]{Aazami-15-JGP}:
\begin{equation}
\xi (\sigma_{\sc np})-\partial (\kappa_{\sc np}) = \kappa_{\sc
np}^{2}+2\sigma_{\sc np} \epsilon_{\sc np} -\sigma_{\sc np} (\rho_{\sc np}
+\overline{\rho_{\sc np}})-\kappa_{\sc np} \beta_{\sc np}
+S(\partial,\partial),  \label{eq-gen-Sach-2}
\end{equation}
\begin{equation}
-\partial (\rho_{\sc np}) - \overline{\partial}(\sigma_{\sc np})=2\sigma_{\sc
np} \overline{\beta_{\sc np}}+(\rho_{\sc np} -\overline{\rho_{\sc
np}})\kappa_{\sc np} +S(\partial,\xi),  \label{eq-gen-Sach-3}
\end{equation}
\begin{equation}
\xi (\beta_{\sc np}) - \partial (\epsilon_{\sc np})=\sigma_{\sc np}
(\overline{\kappa_{\sc np}}-\overline{\beta_{\sc np}})+\kappa_{\sc np}
(\epsilon_{\sc np} +\overline{\rho_{\sc np}})+\beta_{\sc np} (\epsilon_{\sc np}
-\overline{\rho_{\sc np}})-S(\partial,\xi),  \label{eq-gen-Sach-4}
\end{equation}
\begin{equation}
\partial (\overline{\beta_{\sc np}}) + \overline{\partial}(\beta_{\sc np})=
|\sigma_{\sc np} |^{2}-|\rho_{\sc np}|^{2}-2|\beta_{\sc np} |^{2}-(\rho_{\sc np}
-\overline{\rho_{\sc np}})\epsilon_{\sc np} -S(\partial,\overline{\partial})
+\frac{1}{2}S(\xi,\xi),  \label{eq-gen-Sach-5}
\end{equation}
\begin{equation}
-\xi (\rho_{\sc np}) - \overline{\partial}(\kappa_{\sc np})=|\kappa_{\sc np}
|^{2}+|\sigma_{\sc np} |^{2}+\rho_{\sc np}^{2}+\kappa_{\sc np}
\overline{\beta_{\sc np}}+\frac{1}{2}S(\xi,\xi). \label{eq-gen-Sach-1}
\end{equation}

Let $(M,\varphi,\xi,\eta,g)$ be a $3$-dimensional almost contact metric
manifold, and let $(\partial,\overline{\partial},\xi)$ be a Newman--Penrose
frame. One may choose $\partial$ such that
\begin{equation}
\varphi (\partial) = i\partial,\quad \varphi (\overline{\partial}) =
-i\overline{\partial}. \label{eq-varphi(delta)-varphi(bardelta)}
\end{equation}
Conversely, the almost contact metric structure $(\varphi,\xi,\eta,g)$ is
uniquely determined by the Newman--Penrose frame
$(\partial,\overline{\partial},\xi)$.
An almost contact metric manifold $(M,\varphi,\xi,\eta,g)$ is called a
trans-Sasakian manifold of type $(\alpha,\beta)$ if
\begin{equation}
\kappa_{\sc np} = \sigma_{\sc np} =0, \qquad \rho_{\sc np}=\beta+i\alpha.
\label{eq-tsm-NP}
\end{equation}

\begin{rem-new}\label{rem-tS-cases} (cf.~\cite[Matsuno 2025]{Matsuno-25-arXiv})
A $3$-dimensional trans-Sasakian manifold $(M,\varphi,\xi,\eta,g)$ of type
$(\alpha,\beta)$ reduces to
\begin{itemize}
\item[{\bf (a)}] a ${\cal C}_{6}$-manifold if $\rho_{\sc np}=i\omega_{\sc np}$
(or equivalently $\alpha=\omega_{\sc np}$, $\beta=0$),

\item[{\bf (b)}] an $\alpha$-Sasakian manifold if $\rho_{\sc np}=i\omega_{\sc
np}=$ constant (or equivalently $\alpha=\omega_{\sc np}=$ constant, $\beta=0$),

\item[{\bf (c)}] a Sasakian manifold if $\rho_{\sc np}=i$ (or equivalently
$\alpha=\omega_{\sc np}$, $\beta=0$),

\item[{\bf (d)}] a ${\cal C}_{5}$-manifold if $\rho_{\sc np}=\Theta_{\sc np}$
(or equivalently $\alpha=0$, $\beta=\Theta_{\sc np}$),

\item[{\bf (e)}] a $\beta$-Kenmotsu manifold if $\rho_{\sc np}=\Theta_{\sc
np}=$ constant (or equivalently $\alpha=0$, $\beta=\Theta_{\sc np}=$ constant),

\item[{\bf (f)}] a Kenmotsu manifold $\rho_{\sc np}=1$ (or equivalently
$\alpha=0$, $\beta=\Theta_{\sc np}=1$),

\item[{\bf (g)}] a cosymplectic manifold if $\rho_{\sc np}=0$ (or equivalently
$\alpha=0=\beta$).
\end{itemize}
\end{rem-new}

\section{Trans-Sasakian structures, shear-free geodesic congruences, and
conformal foliations by geodesics\label{Sect-tsm-equiva-NP}}

In dimension $3$, the Newman--Penrose formalism admits a particularly geometric
interpretation of the trans-Sasakian condition. Once the metric and orientation
are fixed, an almost contact metric structure is completely determined by its
structure vector field $\xi$. The covariant derivatives of $\xi$ are then
described by the congruence formed by its integral curves, through the
associated acceleration, shear, expansion, and twist.

The purpose of this subsection is to make this correspondence precise. We
prove, directly from the definitions, that in dimension $3$ the following three
conditions are equivalent:
\begin{enumerate}
\item[{\bf (a)}] a compatible trans-Sasakian almost contact metric structure;

\item[{\bf (b)}] a geodesic shear-free congruence generated by the structure
vector field; and

\item[{\bf (c)}] a conformal foliation by geodesics.
\end{enumerate}
In Newman--Penrose notation, this equivalence is expressed by
\[
\kappa_{\sc np}=0,\qquad \sigma_{\sc np}=0,\qquad \rho_{\sc np}=\beta+i\alpha.
\]
This viewpoint also explains the relation with the work of Baird and Wood (see
\cite{Baird-Wood-91-JAMS}, \cite{Baird-Wood-92-PLMS}), and motivates the
fixed-metric rigidity problem for the homogeneous metrics ${\Bbb
E}(\kappa,\tau)$ treated in the following subsection.

Let $(M^3,g)$ be an oriented $3$-dimensional Riemannian manifold and let $\xi$
be a global unit vector field. Put
\[
\eta=g(\xi,\cdot), \qquad {\cal D}=\xi^\perp=\ker\eta .
\]
The orientation and metric determine the cross product $\times$, and hence an
almost contact metric structure by
\[
\varphi\xi=0,\qquad \varphi X=\xi\times X \quad\text{for }X\in{\cal D}.
\]
Equivalently, if $(e_1,e_2,\xi)$ is a local oriented orthonormal frame, then
\[
\varphi e_1=e_2,\qquad \varphi e_2=-e_1,\qquad \varphi\xi=0.
\]
We use the trans-Sasakian convention recalled in Section~\ref{Sect-Prel},
namely
\begin{equation}\label{eq:TS}
(\nabla_X\varphi)Y = \alpha\{g(X,Y)\xi-\eta(Y)X\} + \beta\{g(\varphi
X,Y)\xi-\eta(Y)\varphi X\}.
\end{equation}

The vector field $\xi$ determines a one-dimensional foliation, locally given by
its flow lines. These flow lines form a geodesic congruence if
\[
\nabla_\xi\xi=0.
\]
The transverse deformation of the congruence is measured by the endomorphism
\[
A:{\cal D}\longrightarrow{\cal D}, \qquad A(X)=\nabla_X\xi .
\]
The image is horizontal because $\xi$ has unit length. With respect to a local
oriented orthonormal frame $(e_1,e_2)$ of ${\cal D}$, write
\begin{equation}\label{eq:matrixA}
\nabla_{e_1}\xi = f_{11} e_1 + f_{12} e_2, \qquad \nabla_{e_2}\xi
= f_{21} e_1 + f_{22} e_2.
\end{equation}
The expansion, twist and shear are the three elementary parts of this
endomorphism. The expansion is
\[
\Theta=\frac12\operatorname{tr}(A)=\frac12(f_{11}+f_{22}),
\]
and the twist is the skew-symmetric part, equivalently
\[
\omega=\frac12(f_{21}-f_{12})
\]
with the above orientation convention.  The shear is the trace-free symmetric
part of $A$. Thus the congruence is shear-free if and only if
\begin{equation}\label{eq:shearfree}
f_{11}=f_{22}, \qquad f_{12}+f_{21}=0.
\end{equation}

\begin{defn}
The foliation generated by $\xi$ is called conformal if the transverse metric
changes conformally along the flow of $\xi$, i.e.
\begin{equation}\label{eq:conformalfoliation}
({\pounds}_\xi g)(X,Y)=2\lambda\, g(X,Y) \qquad \text{for all } X,Y\in{\cal D}
\end{equation}
for some smooth function $\lambda$.  If, in addition, its leaves are geodesics,
we call it a conformal foliation by geodesics.
\end{defn}

\begin{lem}\label{lem:shear-conformal}
The congruence generated by $\xi$ is shear-free if and only if the foliation
generated by $\xi$ is conformal in the sense of \eqref{eq:conformalfoliation}.
In that case $\lambda=\Theta$.
\end{lem}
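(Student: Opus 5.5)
The plan is to express $\pounds_\xi g$ on ${\cal D}$ through the matrix \eqref{eq:matrixA}. For any vector fields $X,Y$, the torsion-freeness and metric compatibility of $\nabla$ give
\[
(\pounds_\xi g)(X,Y)=\xi(g(X,Y))-g([\xi,X],Y)-g(X,[\xi,Y]) = g(\nabla_X\xi,Y)+g(X,\nabla_Y\xi).
\]
Restricted to $X,Y\in{\cal D}$, this is $g(AX,Y)+g(X,AY)$. It depends only on the symmetric part of $A$, as an endomorphism of ${\cal D}$ with the restricted metric. Since $A$ takes values in ${\cal D}$ (as already noted, because $g(\xi,\xi)=1$), no $\xi$-components enter.

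Next I evaluate this on the local oriented orthonormal frame $(e_1,e_2)$ using \eqref{eq:matrixA}. This gives
\[
(\pounds_\xi g)(e_1,e_1)=2f_{11},\qquad (\pounds_\xi g)(e_2,e_2)=2f_{22},\qquad (\pounds_\xi g)(e_1,e_2)=f_{12}+f_{21}.
\]
Condition \eqref{eq:conformalfoliation} for all $X,Y\in{\cal D}$ is equivalent, by bilinearity, to the three frame equations $2f_{11}=2\lambda$, $2f_{22}=2\lambda$ and $f_{12}+f_{21}=0$.

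Now the equivalence follows. If the foliation is conformal, then $f_{11}=f_{22}=\lambda$ and $f_{12}+f_{21}=0$, which is exactly the shear-free condition \eqref{eq:shearfree}. In addition, $\lambda=\frac12(f_{11}+f_{22})=\Theta$. Conversely, if \eqref{eq:shearfree} holds, set $\lambda:=\Theta=f_{11}=f_{22}$. This is a smooth function, independent of the chosen frame because it is half the trace of $A$. The three frame equations then hold, so \eqref{eq:conformalfoliation} holds on each frame domain, and hence globally.

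The only point needing care, and not really an obstacle, is well-definedness. The frame $(e_1,e_2)$ is only local, so I must check that shear-freeness and $\lambda$ do not depend on the frame. Shear-freeness is intrinsic: it says the trace-free symmetric part of $A$ vanishes, i.e. $A+A^{t}=(\operatorname{tr}A)\,I$ on ${\cal D}$. The function $\lambda=\frac12\operatorname{tr}A$ is likewise intrinsic, so the local conclusions patch together. If desired, I can also note the Newman--Penrose translation: by \eqref{eq-PNF-nabla(partial,xi)}, $(\pounds_\xi g)(\partial,\partial)=2g(\nabla_\partial\xi,\partial)=-2\sigma_{\sc np}$. Hence conformality is exactly $\sigma_{\sc np}=0$, with $\lambda=\Theta_{\sc np}$.
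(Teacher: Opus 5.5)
Your proof is correct and follows the paper's argument. Both rest on the identity $(\pounds_\xi g)(X,Y)=g(\nabla_X\xi,Y)+g(X,\nabla_Y\xi)$ on ${\cal D}$: the paper phrases shear-freeness as $A^{\mathrm{sym}}=\Theta I$, while you read the condition off the frame components $2f_{11}$, $2f_{22}$, $f_{12}+f_{21}$. Your component version also makes the converse direction explicit (conformal $\Rightarrow$ shear-free, with $\lambda=\Theta$ forced by the trace), which the paper's proof leaves implicit.
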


\noindent{\bf Proof.} For horizontal vector fields $X,Y$, metric compatibility
and the torsion-free property of the Levi-Civita connection give
\[
({\pounds}_\xi g)(X,Y) = g(\nabla_X\xi,Y)+g(X,\nabla_Y\xi).
\]
Hence
\begin{equation}
({\pounds}_\xi g)(X,Y) = 2g(A^{\mathrm{sym}}X,Y), \label{eq-lie-g-Asmy}
\end{equation}
where $A^{\mathrm{sym}}$ is the symmetric part of $A$.

The shear is the trace-free symmetric part of $A$, namely
\[
A^{\mathrm{sym}}-\Theta I, \qquad \Theta=\frac12\operatorname{tr}A.
\]
Therefore the congruence is shear-free if and only if
\begin{equation}
A^{\mathrm{sym}}=\Theta I. \label{eq-Asmy-theta}
\end{equation}
From (\ref{eq-lie-g-Asmy}) and (\ref{eq-Asmy-theta}), we have
\begin{equation}
({\pounds}_\xi g)(X,Y)=2\Theta g(X,Y) \label{eq:conformalfoliation-1}
\end{equation}
for all horizontal vector field $X,Y$, which is precisely the conformality of
the foliation. In view of (\ref{eq:conformalfoliation}) and
(\ref{eq:conformalfoliation-1}), we get $\lambda=\Theta$. $\blacksquare$

We can now state the three equivalent formulations of the same $3$-dimensional
geometry.
\begin{prop}\label{prop:main-equivalence}
Let $(M^3,g)$ be an oriented $3$-dimensional Riemannian manifold and let $\xi$
be a unit vector field.  Let $(\varphi,\xi,\eta,g)$ be the almost contact
metric structure determined by $\xi$ and the orientation as above.  Then the
following are equivalent:
\begin{enumerate}
\item[{\bf (a)}] $(\varphi,\xi,\eta,g)$ is trans-Sasakian of type
$(\alpha,\beta)$ for some smooth functions $\alpha,\beta$; \item[{\bf (b)}] the
congruence generated by $\xi$ is geodesic and shear-free; \item[{\bf (c)}] the
foliation generated by $\xi$ is a conformal foliation by geodesics.
\end{enumerate}
When these conditions hold,
\begin{equation}\label{eq:nablaxiTS}
\nabla_X\xi = -\alpha\varphi X+\beta\bigl(X-\eta(X)\xi\bigr)
\end{equation}
for every vector field $X$, and
\[
\beta=\Theta,\qquad \alpha=\omega .
\]
\end{prop}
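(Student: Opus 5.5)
The equivalence (b)$\Leftrightarrow$(c) is immediate from Lemma~\ref{lem:shear-conformal}: both conditions contain "geodesic", and shear-free is equivalent to conformal. So the work is (a)$\Leftrightarrow$(b). I would express everything through the endomorphism $A$ and the matrix $(f_{ij})$ of \eqref{eq:matrixA} in a local oriented orthonormal frame $(e_1,e_2,\xi)$ with $\varphi e_1=e_2$, $\varphi e_2=-e_1$.

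For (a)$\Rightarrow$(b), I would use \eqref{eq-nabla_X(xi)}, already recalled in Section~\ref{Sect-Prel}. For completeness it can be rederived by putting $Y=\xi$ in \eqref{eq:TS}. Then $-\varphi(\nabla_X\xi)=\alpha(\eta(X)\xi-X)-\beta\varphi X$, and applying $\varphi$ together with $\varphi^2=-I+\eta\otimes\xi$ and $\eta(\nabla_X\xi)=0$ gives \eqref{eq:nablaxiTS}. Taking $X=\xi$ yields $\nabla_\xi\xi=0$. Taking $X=e_1,e_2$ gives $\nabla_{e_1}\xi=\beta e_1-\alpha e_2$ and $\nabla_{e_2}\xi=\alpha e_1+\beta e_2$. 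Hence $f_{11}=f_{22}=\beta$ and $f_{12}=-\alpha=-f_{21}$, so \eqref{eq:shearfree} holds, and moreover $\Theta=\beta$, $\omega=\tfrac12(f_{21}-f_{12})=\alpha$.

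For (b)$\Rightarrow$(a), I would define $\beta:=\Theta$ and $\alpha:=\omega$; these are globally defined smooth functions because trace and twist do not depend on the choice of oriented frame. Shear-freeness says $A=\beta I-\alpha J$ on ${\cal D}$ with $J=\varphi|_{\cal D}$. Combined with $\nabla_\xi\xi=0$, this gives \eqref{eq:nablaxiTS} for every $X$. The main step is recovering the full tensor identity \eqref{eq:TS} for $\nabla\varphi$ from \eqref{eq:nablaxiTS}. I plan to use the 3-dimensional formula $\varphi Y=\xi\times Y$, valid for all $Y$ since $\xi\times\xi=0$. The cross product is parallel, so $(\nabla_X\varphi)Y=(\nabla_X\xi)\times Y$.

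I then substitute $\nabla_X\xi=-\alpha\,\xi\times X+\beta(X-\eta(X)\xi)$ and expand. The first term uses the identity $(\xi\times X)\times Y=g(\xi,Y)X-g(X,Y)\xi$, giving $\alpha(g(X,Y)\xi-\eta(Y)X)$. The second term is $\beta(X\times Y-\eta(X)\,\xi\times Y)$. It remains to check that this equals $\beta(g(\varphi X,Y)\xi-\eta(Y)\varphi X)$. By linearity in $X,Y$, this is a finite check on the frame $e_1,e_2,\xi$. For example, with $X=e_1$, $Y=e_2$ both sides are $\beta\xi$, and with $X=e_1$, $Y=\xi$ both sides are $-\beta e_2$. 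I expect this bookkeeping, especially sign and orientation conventions, to be the only delicate point.

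Finally, the NP reformulation follows by evaluating the definitions \eqref{eq-spin-coe-1} on \eqref{eq:nablaxiTS}: $\nabla_\xi\xi=0$ gives $\kappa_{\sc np}=0$, and computing $g(\partial,\nabla_\partial\xi)$ and $g(\partial,\nabla_{\overline\partial}\xi)$ gives $\sigma_{\sc np}=0$ and $\rho_{\sc np}=\beta+i\alpha$, consistent with \eqref{eq-tsm-NP}.
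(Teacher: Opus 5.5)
Your proposal is correct and follows essentially the same route as the paper: (a)$\Rightarrow$(b) by setting $Y=\xi$ in \eqref{eq:TS} and reading off the matrix $(f_{ij})$, (b)$\Rightarrow$(a) by rebuilding \eqref{eq:nablaxiTS} and then using $(\nabla_X\varphi)Y=(\nabla_X\xi)\times Y$ with the triple-product identity, and (b)$\Leftrightarrow$(c) via Lemma~\ref{lem:shear-conformal}. You are in fact more careful than the paper's proof in a few places: you explicitly check the $\beta$-term identity $X\times Y-\eta(X)\,\xi\times Y=g(\varphi X,Y)\xi-\eta(Y)\varphi X$, you spell out how \eqref{eq:nablaxiTS} follows from $Y=\xi$ by applying $\varphi$, and you note that $\alpha,\beta$ do not depend on the choice of frame; the paper passes over all three.
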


\noindent{\bf Proof.} We first prove that (a) implies (b). Suppose that
\eqref{eq:TS} holds. Substituting $Y=\xi$ in (\ref{eq:TS}), we get
\[
(\nabla_X\varphi)\xi =\alpha(\eta(X)\xi-X)-\beta\varphi X,
\]
which gives \eqref{eq:nablaxiTS}. Setting $X=\xi$ in \eqref{eq:nablaxiTS} gives
\[
\nabla_\xi\xi=0,
\]
so the congruence is geodesic. For $X=e_1,e_2$, \eqref{eq:nablaxiTS} gives
\[
\nabla_{e_1}\xi=\beta e_1-\alpha e_2, \qquad \nabla_{e_2}\xi = \alpha e_1+\beta
e_2.
\]
Thus, in the notation of \eqref{eq:matrixA}, one has $f_{11}=f_{22}=\beta$ and
$f_{12}=-\alpha$, $f_{21}=\alpha$.  Hence $f_{11}=f_{22}$ and
$f_{12}+f_{21}=0$, so the shear vanishes.

Conversely, suppose that the congruence is geodesic and shear-free.  Then
$\nabla_\xi\xi=0$ and \eqref{eq:shearfree} holds.  Define
\[
\beta=f_{11}=f_{22}, \qquad \alpha=f_{21}=-f_{12}.
\]
Then
\[
\nabla_{e_1}\xi=\beta e_1-\alpha e_2, \qquad \nabla_{e_2}\xi=\alpha e_1+\beta
e_2, \qquad \nabla_\xi\xi=0,
\]
which is precisely \eqref{eq:nablaxiTS} for all vector fields $X$.

It remains to check that \eqref{eq:nablaxiTS} is equivalent to the full
trans-Sasakian equation \eqref{eq:TS}.  Since $\varphi Y=\xi\times Y$ and the
Levi-Civita connection preserves the cross product,
\[
(\nabla_X\varphi)Y=(\nabla_X\xi)\times Y.
\]
Using \eqref{eq:nablaxiTS} and vector product identity, namely
\[
(\xi\times X)\times Y=X\eta(Y)-\xi g(X,Y),
\]
we get \eqref{eq:TS}. Therefore (a) and (b) are equivalent.  Finally, (b) and
(c) are equivalent by Lemma~\ref{lem:shear-conformal} together with the
geodesic condition. $\blacksquare$

In the Newman--Penrose notation fixed above,
Proposition~\ref{prop:main-equivalence} takes the particularly simple form
\begin{equation}\label{eq:NP-equivalence}
(\varphi,\xi,\eta,g) \text{ is trans-Sasakian of type }(\alpha,\beta)
\quad\Longleftrightarrow\quad \kappa_{\sc np}=0,\quad \sigma_{\sc np}=0, \quad
\rho_{\sc np}=\beta+i\alpha.
\end{equation}
Here $\kappa_{\sc np}=0$ is precisely the geodesic condition, while
$\sigma_{\sc np}=0$ is the shear-free condition. Moreover,
\[
\rho_{\sc np}=\Theta_{\sc np}+i\omega_{\sc np},
\]
so the identities $\beta=\Theta_{\sc np}$ and $\alpha=\omega_{\sc np}$ from
Proposition~\ref{prop:main-equivalence} recover the last condition in
\eqref{eq:NP-equivalence}.

This is the Newman--Penrose characterization stated by Matsuno
\cite[Proposition~3.3]{Matsuno-25-arXiv}. Matsuno also states that a
$3$-dimensional normal almost contact metric structure is characterized by the
structure vector field generating a shear-free geodesic congruence
\cite[Proposition~3.1]{Matsuno-25-arXiv}; in dimension $3$ this is equivalent
to the trans-Sasakian condition.

Proposition~\ref{prop:main-equivalence} moreover reveals an unexpected bridge
between trans-Sasakian geometry and the theory of conformal foliations by
geodesics developed by Baird and Wood in the context of harmonic morphisms. In
dimension $3$, once the metric and orientation are fixed, an oriented unit
vector field $\xi$ determines both an almost contact metric structure and a
one-dimensional foliation, and Proposition~\ref{prop:main-equivalence} shows
that
\[
\text{conformal and geodesic foliation} \quad \Longleftrightarrow\quad
\text{trans-Sasakian structure}.
\]
Thus the classification of conformal foliations by geodesics in $3$-dimensional
space forms in \cite[Baird and Wood 1991]{Baird-Wood-91-JAMS} can be read,
through this dictionary, as a classification of the corresponding
trans-Sasakian structures on those fixed metrics. The isolated singularities
appearing in their work correspond, in our language, to singular congruences,
and therefore lie outside the smooth global trans-Sasakian setting, although
they remain relevant for local examples on punctured domains. There is also a
broader Thurston-geometric context: Baird and Wood relate harmonic morphisms
and conformal geodesic foliations to the natural Seifert fibrations occurring
in Thurston geometry \cite[Baird and Wood 1992]{Baird-Wood-92-PLMS}.

Compared with the foliation-theoretic language of Baird and Wood, the
Newman--Penrose reformulation is particularly powerful: the conformal geodesic
foliation is described through the congruence generated by $\xi$, and its
geometry is encoded entirely by the three scalar coefficients $\kappa_{\sc
np}$, $\sigma_{\sc np}$, and $\rho_{\sc np}$. This is the perspective we use in
Section~6. In the ${\mathbb E}(\kappa,\tau)$-geometries, we fix the homogeneous
metric, but do not assume that the structure vector field is vertical, nor that
the almost contact structure is homogeneous. The Newman--Penrose equations
force verticality in the non-space-form cases. Thus the canonical homogeneous
foliation is not put in by hand; it is recovered from the trans-Sasakian
equations.

\section{Curvature tensors of trans-Sasakian manifolds via the Newman--Penrose
formalism\label{Sect-NP-cur}}

\begin{prop}
Let $\left(M,\varphi,\xi,\eta,g\right) $ be a $3$-dimensional trans-Sasakian
manifold of type $(\alpha,\beta)$ equipped with a Newman--Penrose frame
$(\partial,\overline{\partial},\xi)$. Then,
\begin{equation}
\nabla_{\partial}\xi =\overline{\rho_{\sc np}}\partial, \qquad
\nabla_{\overline{\partial}}\xi =\rho_{\sc np} \overline{\partial},\qquad
\nabla_{\xi}\xi =0, \label{eq-NP-tsm-nabla(xi)}
\end{equation}
\begin{equation}
\left(\nabla_{\partial}\eta \right) \partial = 0, \qquad
\left(\nabla_{\partial}\eta \right) \overline{\partial}=\overline{\rho_{\sc
np}},\qquad \left(\nabla_{\partial}\eta \right) \xi
=0,\label{eq-NP-tsm-nabla(delta)eta}
\end{equation}
\begin{equation}
\left(\nabla_{\xi}\eta \right) \partial = 0, \qquad \left(\nabla_{\xi}\eta
\right) {\overline{\partial}} = 0,\qquad \left(\nabla_{\xi}\eta\right) \xi =0.
\label{eq-NP-tsm-nabla(xi)eta}
\end{equation}
\end{prop}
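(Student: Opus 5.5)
The plan is to read everything off the frame relations (\ref{eq-PNF-nabla(partial,xi)}) and (\ref{eq-PNF-nabla(xi,xi)}), combined with the Newman--Penrose characterization (\ref{eq:NP-equivalence}) of the trans-Sasakian condition, namely $\kappa_{\sc np}=\sigma_{\sc np}=0$ and $\rho_{\sc np}=\beta+i\alpha$.

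\emph{Step 1: the identities (\ref{eq-NP-tsm-nabla(xi)}).} Putting $\sigma_{\sc np}=0$ into (\ref{eq-PNF-nabla(partial,xi)}) gives $\nabla_{\partial}\xi=\overline{\rho_{\sc np}}\partial$. Since $\xi$ is real and the connection is real, complex conjugation gives $\nabla_{\overline{\partial}}\xi=\rho_{\sc np}\overline{\partial}$. Putting $\kappa_{\sc np}=0$ into (\ref{eq-PNF-nabla(xi,xi)}) gives $\nabla_\xi\xi=0$. As a consistency check, one can instead use (\ref{eq:nablaxiTS}) with (\ref{eq-varphi(delta)-varphi(bardelta)}):
\[
\nabla_\partial\xi=-\alpha\varphi\partial+\beta\partial=(\beta-i\alpha)\partial=\overline{\rho_{\sc np}}\partial .
\]

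\emph{Step 2: the covariant derivatives of $\eta$.} Because $\eta=g(\xi,\cdot)$ and $\nabla g=0$,
\[
(\nabla_X\eta)Y=g(\nabla_X\xi,Y).
\]
Take $X=\partial$. The metric relations (\ref{eq-NPF-metric}) give $g(\partial,\partial)=0$, $g(\partial,\overline{\partial})=1$ and $g(\partial,\xi)=0$. Hence
\[
(\nabla_\partial\eta)\partial=\overline{\rho_{\sc np}}\,g(\partial,\partial)=0,\qquad
(\nabla_\partial\eta)\overline{\partial}=\overline{\rho_{\sc np}},\qquad
(\nabla_\partial\eta)\xi=0,
\]
which is (\ref{eq-NP-tsm-nabla(delta)eta}). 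The vanishing of $(\nabla_\partial\eta)\xi$ also follows from $g(\xi,\xi)=1$. Take $X=\xi$. Since $\nabla_\xi\xi=0$, every component of $\nabla_\xi\eta$ vanishes, which is (\ref{eq-NP-tsm-nabla(xi)eta}). Alternatively, this can be checked against (\ref{eq-nabla_X(eta)}) by extending complex-bilinearly.

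There is no real obstacle here. The only point needing care is the conjugation convention in the inner products: $g$ must be extended complex-bilinearly, not Hermitian, so that $g(\partial,\partial)=0$ and $g(\partial,\overline{\partial})=1$. This is what makes $\overline{\rho_{\sc np}}$, rather than $\rho_{\sc np}$, appear in the $(\partial,\overline{\partial})$ slot.
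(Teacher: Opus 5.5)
Your proof is correct, but your main argument takes a slightly different route from the paper's. The paper evaluates the tensorial trans-Sasakian identities (\ref{eq-nabla_X(xi)}) and (\ref{eq-nabla_X(eta)}) directly on the frame. It uses $\varphi\partial=i\partial$, $\eta(\partial)=0$ and $\rho_{\sc np}=\beta+i\alpha$ to convert $\beta-i\alpha$ into $\overline{\rho_{\sc np}}$. Your ``consistency check'' is exactly that computation.

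Your main argument instead sets $\kappa_{\sc np}=\sigma_{\sc np}=0$ in the structure equations (\ref{eq-PNF-nabla(partial,xi)}) and (\ref{eq-PNF-nabla(xi,xi)}). You then obtain every $\eta$-identity from $(\nabla_X\eta)Y=g(\nabla_X\xi,Y)$, so (\ref{eq-nabla_X(eta)}) is never needed as a separate input.

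What your route buys is economy and generality. It uses neither $\varphi$, nor the orientation convention (\ref{eq-varphi(delta)-varphi(bardelta)}), nor the value of $\rho_{\sc np}$. So it shows that all nine identities hold for any unit field generating a geodesic shear-free congruence, and that the $\eta$-identities follow formally from the $\xi$-identities.

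What the paper's route buys is the direct tie to $(\alpha,\beta)$. It confirms that the conventions $\varphi\partial=i\partial$ and $\rho_{\sc np}=\beta+i\alpha$ are mutually consistent, so the entry $\overline{\rho_{\sc np}}$ in the $(\partial,\overline{\partial})$ slot is concretely $\beta-i\alpha$.
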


\noindent {\bf Proof}. From (\ref{eq-nabla_X(xi)}), (\ref{eq-nabla_X(eta)}),
(\ref{eq-NPF-metric}), (\ref{eq-eta-xi-partial}),
(\ref{eq-varphi(delta)-varphi(bardelta)}) and (\ref{eq-tsm-NP}), we directly
obtain (\ref{eq-NP-tsm-nabla(xi)}), (\ref{eq-NP-tsm-nabla(delta)eta}) and
(\ref{eq-NP-tsm-nabla(xi)eta}). $\blacksquare$

\begin{ex-new}
Let
\[
M=\{\left(x^{1},x^{2},x^{3}\right) \in {\Bbb R}^{3}:x^{3}> 0\}
\]
be a $3$-dimensional smooth manifold, where $\left( x^{1},x^{2},x^{3}\right)$
are the standard coordinates in ${\Bbb R}^{3}$. Define a linearly independent
global coordinate frame $(B_{1},B_{2},B_{3})$ on $M$ by
\[
B_{1} = e^{x^{3}}\left(\frac{\partial}{\partial
x^{1}}+x^{2}\frac{\partial}{\partial x^{3}}\right),\quad
B_{2}=e^{x^{3}}\frac{\partial}{\partial x^{2}},\quad
B_{3}=\frac{\partial}{\partial x^{3}}.
\]
Now, define the associated Newman--Penrose frame
$(\partial,\overline{\partial},\xi)$ by
\[
\partial =\frac{1}{\sqrt{2}}\left(B_{1}-iB_{2}\right),
\quad \overline{\partial}=\frac{1}{\sqrt{2}}\left(B_{1} +iB_{2}\right), \quad
\xi =B_{3}.
\]
Define the $1$-form $\eta$ on $M$ by
\[
\eta(X)=g(X,B_{3})
\]
for every vector field $X$ on $M$. Let $\varphi$ be the $(1,1)$ tensor field
given by
\[
\varphi (\partial)=i\partial,\quad \varphi (\overline{\partial}) =
-i\overline{\partial},\quad \varphi \xi=0.
\]
Consider the Riemannian metric $g$ on $M$ defined by
\[
g(\xi,\xi)=g(\partial,\overline{\partial})=1, \qquad
g(\xi,\partial)=g(\partial,\partial)=
g(\overline{\partial},\overline{\partial})=0.
\]
Let $\nabla$ denote the Levi-Civita connection of $g$. Then, by a direct
calculation, we obtain
\[
[B_{1},B_{2}] = x^{2}e^{x^{3}}B_{2}-e^{2x^{3}}B_{3}, \quad
[B_{1},B_{3}]=-B_{1}, \quad [B_{2},B_{3}]=-B_{2},
\]
\[
\nabla_{B_{1}}\xi = -B_{1}+\frac{1}{2}e^{2x^{3}}B_{2},\quad \nabla_{B_{2}}\xi =
-B_{2}-\frac{1}{2}e^{2x^{3}}B_{1},\quad \nabla_{\xi}\xi =0,
\]
\[
\nabla_{\partial}\xi =  \left(-1+\frac{i}{2}e^{2x^{3}}\right)\partial,\quad
\nabla_{\overline{\partial}}\xi =
\left(-1-\frac{i}{2}e^{2x^{3}}\right)\overline{\partial}.
\]
Using the above computation in (\ref{eq-spin-coe-1}), we obtain
\[
\sigma_{\sc np} = -g\left(\partial,\nabla_{\partial}\xi \right) =0, \quad
\kappa_{\sc np} = -g\left(\partial,\nabla_{\xi}\xi \right) =0, \quad \rho_{\sc
np} = g\left(\partial,\nabla_{\overline{\partial}}\xi \right)
=-1-\frac{i}{2}e^{2x^{3}}.
\]
Thus, $(\varphi,\xi,\eta,g)$ defines a trans-Sasakian structure on $M$ of type
\[
(\alpha,\beta)=\left(-\frac{1}{2}e^{2x^{3}},-1\right).
\]
Consequently, integral curves of $\xi$ generate a congruence which is geodesic
and shear-free. Moreover, the expansion $\Theta_{\sc np}$ and twist
$\omega_{\sc np}$ are given by
\[
\Theta_{\sc np} =-1,\quad \omega_{\sc np} =-\frac{1}{2}e^{2x^{3}}.
\]
\end{ex-new}

\begin{ex-new}
Let
\[
M=\{\left(x^{1},x^{2},x^{3}\right) \in {\Bbb R}^{3}:x^{3}> 0\}
\]
be a $3$-dimensional smooth manifold, where $\left(x^{1},x^{2},x^{3}\right)$
are the standard coordinates in ${\Bbb R}^{3}$. Define a linearly independent
global coordinate frame $(B_{1},B_{2},B_{3})$ on $M$ by
\[
B_{1} = x^{3}\left(\frac{\partial}{\partial x^{1}} +
x^{2}\frac{\partial}{\partial x^{3}}\right), \quad
B_{2}=x^{3}\frac{\partial}{\partial x^{2}},\quad B_{3} =
\frac{\partial}{\partial x^{3}}.
\]
Now, define the associated Newman--Penrose frame
$(\partial,\overline{\partial},\xi)$ by
\[
\partial = \frac{1}{\sqrt{2}}(B_{1}-iB_{2}), \quad
\overline{\partial}=\frac{1}{\sqrt{2}}(B_{1}+iB_{2}), \quad \xi =B_{3}.
\]
Define the $1$-form $\eta $ on $M$ by
\[
\eta (X)=g(X,B_{3})
\]
for every vector field $X$ on $M$. Let $\varphi $ be the $(1,1)$ tensor field
given by
\[
\varphi (\partial)=i\partial,\quad \varphi
(\overline{\partial})=-i\overline{\partial },\quad \varphi (\xi )=0.
\]
Consider the Riemannian metric $g$ on $M$ defined by
\begin{equation}
g(\xi,\xi) = g(\partial,\overline{\partial})=1,\qquad
g(\xi,\partial)=g(\partial,\partial) =
g(\overline{\partial},\overline{\partial}) = 0. \nonumber
\end{equation}
Let $\nabla$ denote the Levi-Civita connection of $g$. Then, by a direct
calculation, we obtain
\[
[B_{1},B_{2}] = x^{2}B_{2}-{(x^{3})}^{2}B_{3}, \quad [B_{1},B_{3}] =
-\frac{1}{x^{3}}B_{1}, \quad [B_{2},B_{3}]=-\frac{1}{x^{3}}B_{2},
\]
\[
\nabla_{B_{1}}\xi =-\frac{1}{x^{3}}B_{1}+\frac{1}{2}(x^{3})^{2}B_{2},\quad
\nabla_{B_{2}}\xi =-\frac{1}{x^{3}}B_{2}-\frac{1}{2}(x^{3})^{2}B_{1},\quad
\nabla_{\xi}\xi =0,
\]
\[
\nabla_{\partial}\xi =
\left(-\frac{1}{x^{3}}+i\frac{(x^{3})^{2}}{2}\right)\partial, \quad
\nabla_{\overline{\partial}}\xi = \left(-\frac{1}{x^{3}}-
i\frac{(x^{3})^{2}}{2}\right)\overline{\partial}.
\]
Using the above computation in (\ref{eq-spin-coe-1}), we obtain
\[
\sigma_{\sc np} =-g(\partial,\nabla_{\partial}\xi)=0,
\]
\[
\kappa_{\sc np} =-g(\partial,\nabla_{\xi}\xi)=0,
\]
\[
\rho_{\sc np} =g(\partial,\nabla_{\overline{\partial}}\xi)=-\frac{1}{x^{3}}
-\frac{i}{2}(x^{3})^{2}.
\]
Thus, $(\varphi,\xi,\eta,g)$ defines a trans-Sasakian structure on $M$ of type
\[
(\alpha,\beta)=\left(-\tfrac{1}{2}(x^{3})^{2},-\tfrac{1}{x^{3}}\right).
\]
Consequently, integral curves of $\xi$ generate a congruence which is geodesic
and shear-free. Moreover, the expansion $\Theta_{\sc np}$ and twist
$\omega_{\sc np}$ are given by
\[
\Theta_{\sc np} =-\frac{1}{x^{3}},\quad \omega_{\sc np} =
-\frac{1}{2}(x^{3})^{2}.
\]
\end{ex-new}

\begin{prop}
Let $\left(M,\varphi,\xi,\eta,g\right) $ be a $3$-dimensional trans-Sasakian
manifold of type $(\alpha,\beta)$ equipped with a Newman--Penrose frame
$(\partial,\overline{\partial},\xi)$. Then,
\begin{equation}
{\rm d}\alpha\left(\partial \right) =\frac{1}{2i}\left(\partial\left(\rho_{\sc
np} \right) -\partial \left(\overline{\rho_{\sc np}}\right)\right)
=\frac{1}{2i}\left(\eth (\rho_{\sc np})-\eth (\overline{\rho_{\sc
np}})\right),\label{eq-NP-dalphas(delta)}
\end{equation}
\begin{equation}
{\rm d}\alpha\left(\overline{\partial}\right)
=\frac{1}{2i}\left(\overline{\partial}\left(\rho_{\sc np} \right)
-\overline{\partial}\left(\overline{\rho_{\sc np}}\right)\right)
=\frac{1}{2i}\left(\overline{\eth}(\rho_{\sc
np})-\overline{\eth}(\overline{\rho_{\sc np}})\right),
\label{eq-NP-dalphas(bardelta)}
\end{equation}
\begin{equation}
{\rm d}\alpha\left(\xi \right) =\frac{1}{2i}\left(\xi \left(\rho_{\sc np}
\right) -\xi \left(\overline{\rho_{\sc np}}\right) \right)
=\frac{1}{2i}\left(P(\rho_{\sc np})-P(\overline{\rho_{\sc np}})\right),
\label{eq-NP-dalphas(xi)}
\end{equation}
\begin{equation}
{\rm d}\beta\left(\partial \right) =\frac{1}{2}\left(\partial\left(\rho_{\sc
np} \right) +\partial \left(\overline{\rho_{\sc np}}\right) \right)
=\frac{1}{2}\left(\eth (\rho_{\sc np})+\eth (\overline{\rho_{\sc
np}})\right),\label{eq-NP-dbetas(delta)}
\end{equation}
\begin{equation}
{\rm d}\beta\left(\overline{\partial}\right)
=\frac{1}{2}\left(\overline{\partial}\left(\rho_{\sc np} \right)
+\overline{\partial}\left(\overline{\rho_{\sc np}}\right) \right)
=\frac{1}{2}\left(\overline{\eth}(\rho_{\sc
np})+\overline{\eth}(\overline{\rho_{\sc np}})\right),
\label{eq-NP-dbetas(deltabar)}
\end{equation}
\begin{equation}
{\rm d}\beta\left(\xi \right) =\frac{1}{2}\left(\xi\left(\rho_{\sc np}\right)
+\xi \left(\overline{\rho_{\sc np}}\right) \right)
=\frac{1}{2}\left(P(\rho_{\sc np})+P(\overline{\rho_{\sc np}})\right) .
\label{eq-NP-dbetas(xi)}
\end{equation}
\end{prop}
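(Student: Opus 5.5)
From (\ref{eq-tsm-NP}) we have $\rho_{\sc np}=\beta+i\alpha$ with $\alpha,\beta$ real-valued smooth functions, so $\overline{\rho_{\sc np}}=\beta-i\alpha$. Solving,
\[
\alpha=\frac{1}{2i}\left(\rho_{\sc np}-\overline{\rho_{\sc np}}\right),\qquad
\beta=\frac{1}{2}\left(\rho_{\sc np}+\overline{\rho_{\sc np}}\right).
\]
The plan is to apply the complex vector fields $\partial$, $\overline{\partial}$, $\xi$ (as derivations, extended complex-linearly) to these two identities. Since ${\rm d}f(X)=X(f)$ for any function $f$ and any complexified vector field $X$, this gives immediately the first equality in each of (\ref{eq-NP-dalphas(delta)})--(\ref{eq-NP-dbetas(xi)}).

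For the second equalities, recall from (\ref{eq-NPF-gauge-spin-coe-1}) that $\rho_{\sc np}'=\rho_{\sc np}$ under a gauge transformation. Hence $\rho_{\sc np}$ has spin weight $s=0$, and so does $\overline{\rho_{\sc np}}$, since conjugating $\rho'_{\sc np}=\rho_{\sc np}$ gives $\overline{\rho_{\sc np}}'=\overline{\rho_{\sc np}}$. With $s=0$, the definitions (\ref{eq-spin-weight2})--(\ref{eq-spin-weight1}) reduce to
\[
\eth(q)=\partial(q),\qquad \overline{\eth}(q)=\overline{\partial}(q),\qquad P(q)=\xi(q)
\]
for $q=\rho_{\sc np},\overline{\rho_{\sc np}}$. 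Substituting these into the first equalities yields the second ones.

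There is no real obstacle here. The only point needing care is the spin weight of $\overline{\rho_{\sc np}}$: conjugation sends weight $s$ to $-s$, but since $s=0$ the weight stays $0$. We also use implicitly that $\alpha,\beta$ are real, which is what allows splitting $\rho_{\sc np}$ into real and imaginary parts as above.
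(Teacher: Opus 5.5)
Your proposal is correct and follows the paper's route: split $\rho_{\sc np}=\beta+i\alpha$ into real and imaginary parts, apply $\partial,\overline{\partial},\xi$, and use the definitions of $\eth,\overline{\eth},P$. You also spell out that $\rho_{\sc np}$ and $\overline{\rho_{\sc np}}$ have spin weight $0$, so $\eth,\overline{\eth},P$ reduce to $\partial,\overline{\partial},\xi$; the paper leaves this step implicit.
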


\noindent {\bf Proof.} It is known that the spin coefficient $\rho_{\sc
np}=\beta+i\alpha$ on $M$. Consequently, its real and imaginary parts are given
by
\begin{equation}
\alpha=\frac{1}{2i}\left(\rho_{\sc np} -\overline{\rho_{\sc np}}\right),\qquad
\beta=\frac{1}{2}\left(\rho_{\sc np} +\overline{\rho_{\sc np}}\right).
\label{eq-alphas-betas-NP}
\end{equation}
Using (\ref{eq-spin-weight2}), (\ref{eq-spin-weight3}), (\ref{eq-spin-weight1})
together with (\ref{eq-alphas-betas-NP}), we obtain
(\ref{eq-NP-dalphas(delta)}), (\ref{eq-NP-dalphas(bardelta)}),
(\ref{eq-NP-dalphas(xi)}), (\ref{eq-NP-dbetas(delta)}),
(\ref{eq-NP-dbetas(deltabar)}) and (\ref{eq-NP-dbetas(xi)}). $\blacksquare$

\begin{prop}
Let $\left(M,\varphi,\xi,\eta,g\right) $ be a $3$-dimensional trans-Sasakian
manifold of type $(\alpha,\beta)$ equipped with a Newman--Penrose frame
$(\partial,\overline{\partial},\xi)$. Then,
\begin{equation}
\rho_{\sc np}^{2}-\left(\overline{\rho_{\sc np}}\right)^{2}+\xi \left(\rho_{\sc
np} \right) -\xi \left(\overline{\rho_{\sc np}}\right) =0.
\label{eq-NP-2alphabeta+xi(alpha)=0}
\end{equation}
\end{prop}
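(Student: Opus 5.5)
The plan is to translate the known scalar identity (\ref{eq-condition-alpha-beta-xi}), namely $2\alpha\beta+{\rm d}\alpha(\xi)=0$, into Newman--Penrose language using $\rho_{\sc np}=\beta+i\alpha$ from (\ref{eq-tsm-NP}). By (\ref{eq-alphas-betas-NP}) we have $\alpha=\frac{1}{2i}(\rho_{\sc np}-\overline{\rho_{\sc np}})$ and $\beta=\frac12(\rho_{\sc np}+\overline{\rho_{\sc np}})$. Hence
\[
2\alpha\beta=\frac{1}{2i}\left(\rho_{\sc np}-\overline{\rho_{\sc np}}\right)\left(\rho_{\sc np}+\overline{\rho_{\sc np}}\right)=\frac{1}{2i}\left(\rho_{\sc np}^{2}-\left(\overline{\rho_{\sc np}}\right)^{2}\right).
\]
By (\ref{eq-NP-dalphas(xi)}),
\[
{\rm d}\alpha(\xi)=\frac{1}{2i}\left(\xi(\rho_{\sc np})-\xi(\overline{\rho_{\sc np}})\right).
\]
Adding these two expressions and multiplying the resulting identity $2\alpha\beta+{\rm d}\alpha(\xi)=0$ by $2i$ gives exactly (\ref{eq-NP-2alphabeta+xi(alpha)=0}).

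The only point needing care is that (\ref{eq-condition-alpha-beta-xi}) is quoted from the general trans-Sasakian literature. If one prefers a self-contained Newman--Penrose derivation, I would instead use the generalized Sachs equation (\ref{eq-gen-Sach-1}) with $\kappa_{\sc np}=\sigma_{\sc np}=0$. This gives
\[
-\xi(\rho_{\sc np})=\rho_{\sc np}^{2}+\tfrac12 S(\xi,\xi).
\]
Subtract its complex conjugate. Since $S(\xi,\xi)$ is real, it cancels, and we get $\xi(\rho_{\sc np})-\xi(\overline{\rho_{\sc np}})=-(\rho_{\sc np}^2-\overline{\rho_{\sc np}}^{\,2})$, which is again (\ref{eq-NP-2alphabeta+xi(alpha)=0}).

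No real obstacle is expected. The conjugation step only requires that $\xi$ is a real vector field, so that $\overline{\xi(\rho_{\sc np})}=\xi(\overline{\rho_{\sc np}})$, and that $S(\xi,\xi)$ is real.
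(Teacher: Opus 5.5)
Your main argument is exactly the paper's proof. The paper substitutes $\alpha=\frac{1}{2i}(\rho_{\sc np}-\overline{\rho_{\sc np}})$, $\beta=\frac12(\rho_{\sc np}+\overline{\rho_{\sc np}})$ and (\ref{eq-NP-dalphas(xi)}) into $2\alpha\beta+{\rm d}\alpha(\xi)=0$ and clears the factor $\frac{1}{2i}$, just as you do. Your alternative, subtracting the complex conjugate of the Sachs equation (\ref{eq-gen-Sach-1}) with $\kappa_{\sc np}=\sigma_{\sc np}=0$, is also correct and keeps the argument entirely within the Newman--Penrose framework.
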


\noindent {\bf Proof.} By using (\ref{eq-NP-dalphas(xi)}) and
(\ref{eq-alphas-betas-NP}) in (\ref{eq-condition-alpha-beta-xi}), we get
(\ref{eq-NP-2alphabeta+xi(alpha)=0}). $\blacksquare$

\begin{prop}
Let $\left(M,\varphi,\xi,\eta,g\right) $ be a $3$-dimensional trans-Sasakian
manifold of type $(\alpha,\beta)$ equipped with a Newman--Penrose frame
$(\partial,\overline{\partial},\xi)$. Then,
\begin{equation}
R\left(\partial,\overline{\partial}\right) \xi =
-\overline{\partial}(\overline{\rho_{\sc np}})\partial +\partial (\rho_{\sc
np})\overline{\partial}, \label{eq-NP-tsm-R(delta,bardelta)}
\end{equation}
\begin{equation}
R\left(\partial, \xi\right) \xi = -\left(\overline{\rho_{\sc
np}}\right)^{2}\partial-\xi (\overline{\rho_{\sc np}})\partial,
\label{eq-NP-tsm-R(xi,delta)}
\end{equation}
\begin{equation}
R\left(\overline{\partial},\xi\right) \xi = -\rho_{\sc
np}^{2}\overline{\partial}-\xi (\rho_{\sc np})\overline{\partial}.
\label{eq-NP-tsm-R(xi,bardelta)}
\end{equation}
\end{prop}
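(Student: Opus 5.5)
The plan is to evaluate the defining formula
\[
R(X,Y)\xi=\nabla_X\nabla_Y\xi-\nabla_Y\nabla_X\xi-\nabla_{[X,Y]}\xi
\]
directly on pairs of Newman--Penrose frame vectors. The inputs are the frame relations (\ref{eq-PNF-nabla(partial,partial)})--(\ref{eq-PNF-Lie(xi,partial)}), specialized to the trans-Sasakian case $\kappa_{\sc np}=\sigma_{\sc np}=0$ from (\ref{eq-tsm-NP}). In this case (\ref{eq-NP-tsm-nabla(xi)}) gives
\[
\nabla_\partial\xi=\overline{\rho_{\sc np}}\,\partial,\qquad \nabla_{\overline\partial}\xi=\rho_{\sc np}\,\overline\partial,\qquad \nabla_\xi\xi=0 .
\]
The brackets reduce to
\[
[\partial,\overline\partial]=\overline{\beta_{\sc np}}\,\partial-\beta_{\sc np}\,\overline\partial+(\rho_{\sc np}-\overline{\rho_{\sc np}})\,\xi,
\qquad
[\partial,\xi]=-(\epsilon_{\sc np}-\overline{\rho_{\sc np}})\,\partial .
\]
We also use the complex conjugate of (\ref{eq-PNF-nabla(partial,barpartial)}), namely $\nabla_{\overline\partial}\partial=-\overline{\beta_{\sc np}}\,\partial-\rho_{\sc np}\,\xi$, together with (\ref{eq-PNF-nabla(xi,partial)}) with $\kappa_{\sc np}=0$.

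For (\ref{eq-NP-tsm-R(delta,bardelta)}) I will expand the three terms separately:
\[
\nabla_\partial(\rho_{\sc np}\overline\partial)=\partial(\rho_{\sc np})\,\overline\partial-\rho_{\sc np}\beta_{\sc np}\,\overline\partial-\rho_{\sc np}\overline{\rho_{\sc np}}\,\xi,
\]
\[
\nabla_{\overline\partial}(\overline{\rho_{\sc np}}\partial)=\overline\partial(\overline{\rho_{\sc np}})\,\partial-\overline{\rho_{\sc np}}\,\overline{\beta_{\sc np}}\,\partial-\overline{\rho_{\sc np}}\rho_{\sc np}\,\xi,
\]
\[
\nabla_{[\partial,\overline\partial]}\xi=\overline{\beta_{\sc np}}\,\overline{\rho_{\sc np}}\,\partial-\beta_{\sc np}\rho_{\sc np}\,\overline\partial ,
\]
where the $\xi$-component of the bracket contributes nothing because $\nabla_\xi\xi=0$. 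On subtracting, the $\xi$-terms cancel between the first two lines, and the $\beta_{\sc np}$-terms cancel against the bracket term. What remains is $\partial(\rho_{\sc np})\overline\partial-\overline\partial(\overline{\rho_{\sc np}})\partial$, which is (\ref{eq-NP-tsm-R(delta,bardelta)}).

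For (\ref{eq-NP-tsm-R(xi,delta)}) the first term $\nabla_\partial\nabla_\xi\xi$ vanishes. The second is $\nabla_\xi(\overline{\rho_{\sc np}}\partial)=\xi(\overline{\rho_{\sc np}})\partial+\overline{\rho_{\sc np}}\epsilon_{\sc np}\partial$. The bracket term is $-(\epsilon_{\sc np}-\overline{\rho_{\sc np}})\overline{\rho_{\sc np}}\partial$. Combining, the $\epsilon_{\sc np}$-terms cancel and we are left with $-\xi(\overline{\rho_{\sc np}})\partial-\overline{\rho_{\sc np}}^{\,2}\partial$. Since $R$ is a real tensor and $\xi$ is real, (\ref{eq-NP-tsm-R(xi,bardelta)}) then follows by complex conjugation of (\ref{eq-NP-tsm-R(xi,delta)}).

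The only real obstacle is bookkeeping of signs and conjugations: in particular, correctly conjugating (\ref{eq-PNF-nabla(partial,barpartial)}) and making sure the gauge-dependent coefficients $\beta_{\sc np}$ and $\epsilon_{\sc np}$ cancel. Their cancellation is a useful consistency check, since the left-hand sides are tensorial while $\beta_{\sc np},\epsilon_{\sc np}$ are not spin-weighted quantities.
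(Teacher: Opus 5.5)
Your proof is correct, but it takes a different route from the paper's.

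\textbf{The paper's route.} The paper never differentiates the frame. It substitutes the frame vectors into the general trans-Sasakian curvature identity (\ref{eq-tr-sasa-cur-ope}) of De and Tripathi. Using $\eta(\partial)=0$, $\varphi\partial=i\partial$ and $\varphi^2\partial=-\partial$, it obtains expressions in $\alpha^2-\beta^2$, $2\alpha\beta$, ${\rm d}\alpha$ and ${\rm d}\beta$. It then repackages these as $-\overline{\rho_{\sc np}}^{\,2}$, $-\xi(\overline{\rho_{\sc np}})$, $\partial(\rho_{\sc np})$, and so on, via $\rho_{\sc np}=\beta+i\alpha$ and (\ref{eq-NP-dalphas(delta)})--(\ref{eq-NP-dbetas(xi)}).

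\textbf{Your route.} You compute $R(X,Y)\xi=\nabla_X\nabla_Y\xi-\nabla_Y\nabla_X\xi-\nabla_{[X,Y]}\xi$ directly from the connection table (\ref{eq-PNF-nabla(partial,partial)})--(\ref{eq-PNF-Lie(xi,partial)}). The individual terms check out, including:
\begin{itemize}
\item the conjugated relation $\nabla_{\overline\partial}\partial=-\overline{\beta_{\sc np}}\,\partial-\rho_{\sc np}\,\xi$;
\item the cancellation of the $|\rho_{\sc np}|^2\xi$ terms and of the $\beta_{\sc np}$ and $\epsilon_{\sc np}$ terms;
\item the final step by complex conjugation.
\end{itemize}
Your sign convention for $R$ is also the one implicit in (\ref{eq-tr-sasa-cur-ope}): for $\alpha=1$, $\beta=0$ that formula gives $R(X,Y)\xi=\eta(Y)X-\eta(X)Y$.

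\textbf{What each buys.} The paper's argument is shorter once the cited identity is granted, and it keeps the $(\alpha,\beta)$-form of the curvature visible. Yours is self-contained within the formalism. It uses only $\kappa_{\sc np}=\sigma_{\sc np}=0$, and never needs $\varphi$, the identification $\rho_{\sc np}=\beta+i\alpha$, or the external curvature identity. So it shows the three formulas hold for any geodesic, shear-free unit vector field in dimension $3$. Read backwards, it also gives an independent derivation of (\ref{eq-tr-sasa-cur-ope}) in dimension $3$, since $R(X,Y)\xi$ is determined by these three components.
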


\noindent {\bf Proof.} From (\ref{eq-tr-sasa-cur-ope}),
(\ref{eq-eta-xi-partial}) and (\ref{eq-varphi(delta)-varphi(bardelta)}), we
obtain
\begin{equation}
R\left(\partial,\overline{\partial}\right) \xi = i{\rm
d}\alpha\left(\overline{\partial}\right) \partial +i{\rm
d}\alpha\left(\partial\right) \overline{\partial}-{\rm
d}\beta\left(\overline{\partial}\right)\partial +{\rm
d}\beta\left(\partial\right) \overline{\partial},
\label{eq-NP-tsm-R(delta,bardelta)-1}
\end{equation}
\begin{equation}
R\left(\partial, \xi\right) \xi = \left({\alpha}^{2} -{\beta}^{2}\right)
\partial +2i\alpha\beta\partial +i{\rm d}\alpha\left(\xi \right) \partial -{\rm
d}\beta\left(\xi\right) \partial,  \label{eq-NP-tsm-R(xi,delta)-1}
\end{equation}
\begin{equation}
R\left(\overline{\partial},\xi\right) \xi =  \left({\alpha}^{2}-
{\beta}^{2}\right) \overline{\partial}-2i\alpha\beta\overline{\partial}-i{\rm
d}\alpha\left(\xi \right) \overline{\partial}-{\rm d}\beta\left(\xi \right)
\overline{\partial}.  \label{eq-NP-tsm-R(xi,bardelta)-1}
\end{equation}
Using (\ref{eq-NP-dalphas(delta)}), (\ref{eq-NP-dalphas(bardelta)}),
(\ref{eq-NP-dbetas(delta)}) and (\ref{eq-NP-dbetas(deltabar)}) in
(\ref{eq-NP-tsm-R(delta,bardelta)-1}), we get
(\ref{eq-NP-tsm-R(delta,bardelta)}). Further, using (\ref{eq-NP-dalphas(xi)}),
(\ref{eq-NP-dbetas(xi)}) and (\ref{eq-alphas-betas-NP}) in
(\ref{eq-NP-tsm-R(xi,delta)-1}) and (\ref{eq-NP-tsm-R(xi,bardelta)-1}), we
obtain (\ref{eq-NP-tsm-R(xi,delta)}) and (\ref{eq-NP-tsm-R(xi,bardelta)}),
respectively. $\blacksquare$

\begin{prop}
Let $\left(M,\varphi,\xi,\eta,g\right) $ be a $3$-dimensional trans-Sasakian
manifold of type $(\alpha,\beta)$ equipped with a Newman--Penrose frame
$(\partial,\overline{\partial},\xi)$. Then the Ricci tensor $S$ of $M$ is given
by
\begin{equation}
S(\partial,\partial)=0,  \label{eq-tsm-gen-Sach-2}
\end{equation}
\begin{equation}
S(\partial,\xi)=-\partial (\rho_{\sc np})=\beta_{\sc np}  (\epsilon_{\sc np}
-\overline{\rho_{\sc np}})-\xi (\beta_{\sc np})+\partial(\epsilon_{\sc np}),
\label{eq-tsm-gen-Sach-3}
\end{equation}
\begin{equation}
S\left(\xi,\xi \right) =-2\xi \left(\rho_{\sc np} \right) -2\rho_{\sc np}^{2},
\label{eq-tsm-gen-Sach-1}
\end{equation}
\begin{equation}
S(\partial,\overline{\partial}) = -|\rho_{\sc np} |^{2}-2|\beta_{\sc np}
|^{2}-(\rho_{\sc np} -\overline{\rho_{\sc np}})\epsilon_{\sc np} -\xi
\left(\rho_{\sc np} \right) -\rho_{\sc np}^{2}-\partial (\overline{\beta_{\sc
np}})-\overline{\partial}(\beta_{\sc np}).  \label{eq-tsm-gen-Sach-5}
\end{equation}
\end{prop}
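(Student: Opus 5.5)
Since $\kappa_{\sc np}=\sigma_{\sc np}=0$ by (\ref{eq-tsm-NP}), the plan is to substitute these values into the generalized Sachs equations (\ref{eq-gen-Sach-2})--(\ref{eq-gen-Sach-1}) and read off each Ricci component. Only for the first two identities will I also check consistency with the intrinsic trans-Sasakian formula (\ref{eq-tsm-S(X,xi)}).

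\emph{Step 1.} In (\ref{eq-gen-Sach-2}) every term except $S(\partial,\partial)$ contains a factor $\kappa_{\sc np}$ or $\sigma_{\sc np}$, or a derivative of one of them. Hence $S(\partial,\partial)=0$, which is (\ref{eq-tsm-gen-Sach-2}).

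\emph{Step 2.} Equation (\ref{eq-gen-Sach-3}) reduces to $-\partial(\rho_{\sc np})=S(\partial,\xi)$. Equation (\ref{eq-gen-Sach-4}) reduces to
\[
\xi(\beta_{\sc np})-\partial(\epsilon_{\sc np})=\beta_{\sc np}(\epsilon_{\sc np}-\overline{\rho_{\sc np}})-S(\partial,\xi).
\]
Solving this second relation for $S(\partial,\xi)$ gives the second expression in (\ref{eq-tsm-gen-Sach-3}).

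As a cross-check on the first expression, use (\ref{eq-tsm-S(X,xi)}) with $n=1$ and $X=\partial$:
\[
S(\partial,\xi)=-{\rm d}\beta(\partial)-{\rm d}\alpha(\varphi\partial)=-\partial\beta-i\,\partial\alpha=-\partial(\beta+i\alpha)=-\partial(\rho_{\sc np}).
\]
This agrees with the Sachs computation.

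\emph{Step 3.} Equation (\ref{eq-gen-Sach-1}) reduces to $-\xi(\rho_{\sc np})=\rho_{\sc np}^2+\tfrac12 S(\xi,\xi)$, which gives (\ref{eq-tsm-gen-Sach-1}). A subtle point is that the left side of (\ref{eq-tsm-gen-Sach-1}) is real while the right side looks complex. Its imaginary part is
\[
-2\bigl(\xi(\alpha)+2\alpha\beta\bigr),
\]
which vanishes by (\ref{eq-condition-alpha-beta-xi}), or equivalently by (\ref{eq-NP-2alphabeta+xi(alpha)=0}). I will remark on this so the formula is seen to be consistent.

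\emph{Step 4.} Equation (\ref{eq-gen-Sach-5}), with $\sigma_{\sc np}=0$, gives
\[
S(\partial,\overline{\partial})=-|\rho_{\sc np}|^2-2|\beta_{\sc np}|^2-(\rho_{\sc np}-\overline{\rho_{\sc np}})\epsilon_{\sc np}+\tfrac12 S(\xi,\xi)-\partial(\overline{\beta_{\sc np}})-\overline{\partial}(\beta_{\sc np}).
\]
Substituting $\tfrac12 S(\xi,\xi)=-\xi(\rho_{\sc np})-\rho_{\sc np}^2$ from Step 3 yields (\ref{eq-tsm-gen-Sach-5}).

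There is no real obstacle here. The only care needed is sign bookkeeping in (\ref{eq-gen-Sach-4}) and the reality remark in Step 3.
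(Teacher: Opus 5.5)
Your proposal is correct and follows the paper's own proof: substitute $\kappa_{\mathrm{np}}=\sigma_{\mathrm{np}}=0$ into the generalized Sachs equations (\ref{eq-gen-Sach-2})--(\ref{eq-gen-Sach-1}). This means reading off $S(\partial,\xi)$ from both (\ref{eq-gen-Sach-3}) and (\ref{eq-gen-Sach-4}), and feeding the resulting $S(\xi,\xi)$ into (\ref{eq-gen-Sach-5}). Your cross-check of $S(\partial,\xi)=-\partial(\rho_{\mathrm{np}})$ against (\ref{eq-tsm-S(X,xi)}) and your reality remark for $S(\xi,\xi)$ via (\ref{eq-condition-alpha-beta-xi}) are not in the paper, but both are correct.
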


\noindent {\bf Proof.} Substituting (\ref{eq-tsm-NP}) into
(\ref{eq-gen-Sach-2}), we obtain (\ref{eq-tsm-gen-Sach-2}). Similarly, using
(\ref{eq-tsm-NP}) in (\ref{eq-gen-Sach-3}) and (\ref{eq-gen-Sach-4}), we derive
(\ref{eq-tsm-gen-Sach-3}). Further, substituting (\ref{eq-tsm-NP}) into
(\ref{eq-gen-Sach-1}) yields (\ref{eq-tsm-gen-Sach-1}). Finally, using
(\ref{eq-tsm-NP}) and (\ref{eq-tsm-gen-Sach-1}) in (\ref{eq-gen-Sach-5}), we
obtain (\ref{eq-tsm-gen-Sach-5}). $\blacksquare$

\begin{cor}
Let $\left(M,\varphi,\xi,\eta,g\right)$ be a $3$-dimensional Sasakian manifold
equipped with a Newman--Penrose frame $(\partial,\overline{\partial},\xi)$.
Then the Ricci tensor $S$ of $M$ is given by
\begin{equation}
S(\partial,\partial)=0,  \label{eq-sm-gen-Sach-2}
\end{equation}
\begin{equation}
S(\partial,\xi)=\beta_{\sc np} (\epsilon_{\sc np} +i) - \xi (\beta_{\sc
np})+\partial(\epsilon_{\sc np})=0, \label{eq-sm-gen-Sach-3}
\end{equation}
\begin{equation}
S\left(\xi,\xi \right) =2,  \label{eq-sm-gen-Sach-1}
\end{equation}
\begin{equation}
S(\partial,\overline{\partial})=-2|\beta_{\sc np} |^{2}-2i\epsilon_{\sc np}
-\partial (\overline{\beta_{\sc np}})-\overline{\partial}(\beta_{\sc np}).
\label{eq-sm-gen-Sach-5}
\end{equation}
\end{cor}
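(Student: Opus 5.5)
The corollary is a specialization of the preceding proposition on the Ricci tensor of a $3$-dimensional trans-Sasakian manifold. A Sasakian manifold is the case $\alpha=1$, $\beta=0$, so by (\ref{eq-tsm-NP}) and Remark~\ref{rem-tS-cases}(c) we have
\[
\kappa_{\sc np}=\sigma_{\sc np}=0,\qquad \rho_{\sc np}=i,\qquad \overline{\rho_{\sc np}}=-i .
\]
In particular $\rho_{\sc np}$ is constant, so $\partial(\rho_{\sc np})=\overline{\partial}(\rho_{\sc np})=\xi(\rho_{\sc np})=0$. The plan is to substitute these values into (\ref{eq-tsm-gen-Sach-2})--(\ref{eq-tsm-gen-Sach-5}), one equation at a time.

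First, (\ref{eq-sm-gen-Sach-2}) is just (\ref{eq-tsm-gen-Sach-2}). Second, (\ref{eq-tsm-gen-Sach-3}) has two expressions for $S(\partial,\xi)$. The first, $-\partial(\rho_{\sc np})$, vanishes. In the second, $\epsilon_{\sc np}-\overline{\rho_{\sc np}}=\epsilon_{\sc np}+i$. Equating the two gives (\ref{eq-sm-gen-Sach-3}), including its final equality to $0$. This zero can also be cross-checked with (\ref{eq-tsm-S(X,xi)}) for $n=1$: since $\alpha$ and $\beta$ are constant, $S(X,\xi)=2\eta(X)$, and $\eta(\partial)=0$. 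Third, (\ref{eq-tsm-gen-Sach-1}) gives $S(\xi,\xi)=-2\cdot 0-2(i)^2=2$, which agrees with $2n(\alpha^2-\beta^2)=2$.

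Fourth, in (\ref{eq-tsm-gen-Sach-5}) we insert $|\rho_{\sc np}|^2=1$, $\rho_{\sc np}-\overline{\rho_{\sc np}}=2i$, $\xi(\rho_{\sc np})=0$ and $\rho_{\sc np}^2=-1$. The terms $-|\rho_{\sc np}|^2$ and $-\rho_{\sc np}^2$ give $-1+1=0$. What remains is $-2|\beta_{\sc np}|^2-2i\epsilon_{\sc np}-\partial(\overline{\beta_{\sc np}})-\overline{\partial}(\beta_{\sc np})$, which is (\ref{eq-sm-gen-Sach-5}).

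There is no real obstacle; the computation is direct. The only points needing care are signs: using $\overline{\rho_{\sc np}}=-i$ (not $+i$) in $\epsilon_{\sc np}-\overline{\rho_{\sc np}}$, and checking that the $\rho_{\sc np}^2$ and $|\rho_{\sc np}|^2$ terms cancel rather than add.
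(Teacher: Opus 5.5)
Your proposal is correct and follows the same route as the paper, which gives no separate proof and treats the corollary as the specialization $\rho_{\sc np}=i$ (so $\overline{\rho_{\sc np}}=-i$ and all frame derivatives of $\rho_{\sc np}$ vanish) of the trans-Sasakian Ricci formulas (\ref{eq-tsm-gen-Sach-2})--(\ref{eq-tsm-gen-Sach-5}). Your sign handling is right, and your cross-check against (\ref{eq-tsm-S(X,xi)}) with $n=1$ is a valid optional extra.
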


\begin{cor}
Let $\left(M,\varphi,\xi,\eta,g\right)$ be a $3$-dimensional Kenmotsu manifold
equipped with a Newman--Penrose frame $(\partial,\overline{\partial},\xi)$.
Then the Ricci tensor $S$ of $M$ is given by
\begin{equation}
S(\partial,\partial)=0,  \label{eq-Kenm-gen-Sach-2}
\end{equation}
\begin{equation}
S(\partial,\xi)=\beta_{\sc np} (\epsilon_{\sc np} -1) - \xi (\beta_{\sc
np})+\partial(\epsilon_{\sc np})=0, \label{eq-Kenm-gen-Sach-3}
\end{equation}
\begin{equation}
S\left(\xi,\xi \right) =-2,  \label{eq-Kenm-gen-Sach-1}
\end{equation}
\begin{equation}
S(\partial,\overline{\partial})=-2-2|\beta_{\sc np}
|^{2}-\partial(\overline{\beta_{\sc np}})-\overline{\partial}(\beta_{\sc np}).
\label{eq-Kenm-gen-Sach-5}
\end{equation}
\end{cor}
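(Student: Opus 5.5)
This is the Kenmotsu specialization of the preceding Proposition, which gives the Ricci tensor of a 3-dimensional trans-Sasakian manifold in Newman--Penrose form. By Remark~\ref{rem-tS-cases}(f), a Kenmotsu manifold is the case $\alpha=0$, $\beta=1$, so $\rho_{\sc np}=\overline{\rho_{\sc np}}=1$ is constant. Hence $\xi(\rho_{\sc np})=\partial(\rho_{\sc np})=0$, $|\rho_{\sc np}|^{2}=1$ and $\rho_{\sc np}-\overline{\rho_{\sc np}}=0$. The plan is to substitute these values into \eqref{eq-tsm-gen-Sach-2}--\eqref{eq-tsm-gen-Sach-5}, one identity at a time.

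\textbf{Step 1.} Identity \eqref{eq-Kenm-gen-Sach-2} is just \eqref{eq-tsm-gen-Sach-2}, which holds for every trans-Sasakian manifold.

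\textbf{Step 2.} In \eqref{eq-tsm-gen-Sach-1}, the values above give $S(\xi,\xi)=-2\cdot 0-2\cdot 1^{2}=-2$, which is \eqref{eq-Kenm-gen-Sach-1}.

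\textbf{Step 3.} Consider \eqref{eq-tsm-gen-Sach-3}. The first equality gives $S(\partial,\xi)=-\partial(\rho_{\sc np})=0$. The second expression there, with $\overline{\rho_{\sc np}}=1$, becomes $\beta_{\sc np}(\epsilon_{\sc np}-1)-\xi(\beta_{\sc np})+\partial(\epsilon_{\sc np})$. Since the two expressions are equal, this quantity also vanishes, which is \eqref{eq-Kenm-gen-Sach-3}.

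As a consistency check, I would also verify $S(\partial,\xi)=0$ from \eqref{eq-tsm-S(X,xi)}. With $n=1$, $\alpha=0$ and $\beta=1$, we have ${\rm d}\alpha={\rm d}\beta=0$, so $S(X,\xi)=-2\eta(X)$. This vanishes at $X=\partial$ and also gives $S(\xi,\xi)=-2$ again.

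\textbf{Step 4.} In \eqref{eq-tsm-gen-Sach-5}, the terms $-|\rho_{\sc np}|^{2}$ and $-\rho_{\sc np}^{2}$ each contribute $-1$. The terms $-(\rho_{\sc np}-\overline{\rho_{\sc np}})\epsilon_{\sc np}$ and $-\xi(\rho_{\sc np})$ vanish. This leaves $S(\partial,\overline{\partial})=-2-2|\beta_{\sc np}|^{2}-\partial(\overline{\beta_{\sc np}})-\overline{\partial}(\beta_{\sc np})$, which is \eqref{eq-Kenm-gen-Sach-5}.

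There is no genuine obstacle in this argument. The only point needing care is Step 4: one must keep the sign conventions and the $\epsilon_{\sc np}$ term straight, and note that it drops out because $\rho_{\sc np}$ is real here. This is unlike the Sasakian case, where the same term survives as $-2i\epsilon_{\sc np}$.
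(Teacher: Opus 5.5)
Your proposal is correct and follows the paper's intended argument: the corollary is stated without a separate proof, as the specialization $\rho_{\sc np}=1$ of the trans-Sasakian Ricci identities (\ref{eq-tsm-gen-Sach-2})--(\ref{eq-tsm-gen-Sach-5}), and your term-by-term substitution gives exactly that. Your cross-check of $S(\partial,\xi)=0$ and $S(\xi,\xi)=-2$ via (\ref{eq-tsm-S(X,xi)}) is not needed, but it is consistent with the result.
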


\begin{cor}
Let $\left(M,\varphi,\xi,\eta,g\right)$ be a $3$-dimensional cosymplectic
manifold equipped with a Newman--Penrose frame
$(\partial,\overline{\partial},\xi)$. Then the Ricci tensor $S$ of $M$ is given
by
\begin{equation}
S(\partial,\partial)=0,  \label{eq-cosym-gen-Sach-2}
\end{equation}
\begin{equation}
S(\partial,\xi)=\beta_{\sc np} (\epsilon_{\sc np}) - \xi (\beta_{\sc
np})+\partial (\epsilon_{\sc np})=0, \label{eq-cosym-gen-Sach-3}
\end{equation}
\begin{equation}
S\left(\xi,\xi \right) =0,  \label{eq-cosym-gen-Sach-1}
\end{equation}
\begin{equation}
S(\partial,\overline{\partial})=-2|\beta_{\sc np} |^{2} - \partial
(\overline{\beta_{\sc np}})-\overline{\partial}(\beta_{\sc np}).
\label{eq-cosym-gen-Sach-5}
\end{equation}
\end{cor}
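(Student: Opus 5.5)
The statement is the cosymplectic corollary: it should follow by specializing the Ricci proposition just proved, (\ref{eq-tsm-gen-Sach-2})--(\ref{eq-tsm-gen-Sach-5}). By Remark~\ref{rem-tS-cases}(g), a $3$-dimensional cosymplectic manifold is a trans-Sasakian manifold with $\alpha=\beta=0$. So by (\ref{eq-tsm-NP}) we have $\kappa_{\sc np}=\sigma_{\sc np}=0$ and $\rho_{\sc np}=0$ identically on $M$. In particular every derivative $\partial(\rho_{\sc np})$, $\xi(\rho_{\sc np})$ vanishes, and so do the products $\rho_{\sc np}^2$, $|\rho_{\sc np}|^2$ and $(\rho_{\sc np}-\overline{\rho_{\sc np}})\epsilon_{\sc np}$.

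The components are then obtained in turn.

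\begin{itemize}
\item[{\bf (i)}] (\ref{eq-cosym-gen-Sach-2}) is just (\ref{eq-tsm-gen-Sach-2}), which holds for all trans-Sasakian structures.
\item[{\bf (ii)}] For (\ref{eq-cosym-gen-Sach-3}), substitute $\rho_{\sc np}=0$ into the two expressions in (\ref{eq-tsm-gen-Sach-3}). The first expression $-\partial(\rho_{\sc np})$ gives $S(\partial,\xi)=0$. The second gives $\beta_{\sc np}\epsilon_{\sc np}-\xi(\beta_{\sc np})+\partial(\epsilon_{\sc np})$. Equating the two yields the stated chain of equalities. As a consistency check, (\ref{eq-tsm-S(X,xi)}) with $n=1$ and $\alpha=\beta=0$ (constant) also gives $S(X,\xi)=0$.
\item[{\bf (iii)}] (\ref{eq-cosym-gen-Sach-1}) follows from (\ref{eq-tsm-gen-Sach-1}), since $-2\xi(0)-2\cdot 0=0$.
\item[{\bf (iv)}] (\ref{eq-cosym-gen-Sach-5}) follows by deleting all $\rho_{\sc np}$-terms in (\ref{eq-tsm-gen-Sach-5}). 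This leaves $-2|\beta_{\sc np}|^2-\partial(\overline{\beta_{\sc np}})-\overline{\partial}(\beta_{\sc np})$.
\end{itemize}

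There is no real obstacle here. The only point needing care is to read (\ref{eq-cosym-gen-Sach-3}) correctly: its middle expression vanishes as a consequence of the structure, not as an additional hypothesis. This is justified by the equality of the two forms of $S(\partial,\xi)$ in (\ref{eq-tsm-gen-Sach-3}), which came from combining the Sachs equations (\ref{eq-gen-Sach-3}) and (\ref{eq-gen-Sach-4}).
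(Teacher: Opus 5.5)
Your proposal is correct and follows the paper's route: the paper states this corollary without a separate proof, as the specialization of the trans-Sasakian Ricci proposition to $\alpha=\beta=0$, i.e.\ $\rho_{\sc np}=0$. Your term-by-term substitution does exactly this, and you rightly read the vanishing of $\beta_{\sc np}\epsilon_{\sc np}-\xi(\beta_{\sc np})+\partial(\epsilon_{\sc np})$ as a consequence of equating the two expressions for $S(\partial,\xi)$ that come from the generalized Sachs equations.
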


We recall that, on a $3$-dimensional Riemannian manifold, the second Bianchi
identity yields the following equations \cite[Aazami 2015]{Aazami-15-JGP}:
\begin{eqnarray}
&&\partial (S(\overline{\partial},\xi)) + \overline{\partial}(S(\partial,\xi))
- \xi (S(\partial,\overline{\partial})) - \frac{1}{2}\xi(S(\xi,\xi))  \nonumber \\
&&\hspace{1cm}=-(\rho_{\sc np} +\overline{\rho_{\sc np}})
(S(\xi,\xi)-S(\partial,\partial))-\overline{\sigma_{\sc np}}
S(\partial,\partial)  \nonumber \\
&&\hspace{1.2cm}-\sigma_{\sc np} S(\partial,\overline{\partial})
-(2\overline{\kappa_{\sc np}}+\overline{\beta_{\sc np}})S(\partial,\xi)
-(2\kappa_{\sc np}+\beta_{\sc np})S(\partial,\overline{\partial}),
\label{eq-diff-bianchi-2}
\end{eqnarray}
\begin{eqnarray}
&&\xi (S(\partial,\xi))-\frac{1}{2}\partial (S(\xi,\xi)) +
\overline{\partial}(S(\partial,\partial))  \nonumber \\
&&\hspace{1cm}=\kappa_{\sc np} S(\xi,\xi)+(\epsilon_{\sc np}
-2\overline{\rho_{\sc np}}-\rho_{\sc np})S(\partial,\xi)  \nonumber \\
&&\hspace{1.2cm}+\sigma_{\sc np} S(\partial,\overline{\partial})
-(\overline{\kappa_{\sc np}} +2\overline{\beta_{\sc np}})S(\partial,\partial)
-\kappa_{\sc np} S(\partial,\overline{\partial}).\label{eq-diff-bianchi-1}
\end{eqnarray}

\begin{prop}
Let $\left(M,\varphi,\xi,\eta,g\right)$ be a $3$-dimensional trans-Sasakian
manifold of type $(\alpha,\beta)$ equipped with a Newman--Penrose frame
$(\partial,\overline{\partial},\xi)$. Then the second Bianchi identity can be
expressed in the following form:
\begin{eqnarray}
&&\partial (S(\overline{\partial},\xi)) +\overline{\partial}(S(\xi,\partial))
-\xi (S(\partial,\overline{\partial})) -\frac{1}{2}\xi(S(\xi,\xi))  \nonumber\\
&&\qquad =-(\rho_{\sc np} +\overline{\rho_{\sc np}})(S(\xi,\xi))
-\overline{\beta_{\sc np}}S(\partial,\xi)-\beta_{\sc np} S(\partial,
\overline{\partial}),\label{eq-PN-tsm-diff-bianchi-2}
\end{eqnarray}
\begin{equation}
\xi (S(\partial,\xi)) - \frac{1}{2}\partial (S(\xi,\xi)) = (\epsilon_{\sc np}
-2\overline{\rho_{\sc np}}-\rho_{\sc np})S(\partial,\xi).
\label{eq-PN-tsm-diff-bianchi-1}
\end{equation}
\end{prop}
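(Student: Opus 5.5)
The plan is to specialize the two general Bianchi identities (\ref{eq-diff-bianchi-2}) and (\ref{eq-diff-bianchi-1}) of Aazami to the trans-Sasakian case. By (\ref{eq-tsm-NP}) we may set $\kappa_{\sc np}=\sigma_{\sc np}=0$ (and hence $\overline{\kappa_{\sc np}}=\overline{\sigma_{\sc np}}=0$), and by (\ref{eq-tsm-gen-Sach-2}) we may set $S(\partial,\partial)=0$. Since $S$ is real, this also gives $S(\overline{\partial},\overline{\partial})=\overline{S(\partial,\partial)}=0$, so every derivative of these components vanishes identically.

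For (\ref{eq-PN-tsm-diff-bianchi-2}), I would substitute these values into (\ref{eq-diff-bianchi-2}) term by term. The product $-(\rho_{\sc np}+\overline{\rho_{\sc np}})(S(\xi,\xi)-S(\partial,\partial))$ becomes $-(\rho_{\sc np}+\overline{\rho_{\sc np}})S(\xi,\xi)$. The terms $-\overline{\sigma_{\sc np}}S(\partial,\partial)$ and $-\sigma_{\sc np}S(\partial,\overline{\partial})$ drop out. The coefficient $-(2\overline{\kappa_{\sc np}}+\overline{\beta_{\sc np}})$ reduces to $-\overline{\beta_{\sc np}}$, and $-(2\kappa_{\sc np}+\beta_{\sc np})$ reduces to $-\beta_{\sc np}$. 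On the left-hand side, symmetry of $S$ lets us write $S(\partial,\xi)=S(\xi,\partial)$, which is exactly how the stated left side reads. This gives (\ref{eq-PN-tsm-diff-bianchi-2}).

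For (\ref{eq-PN-tsm-diff-bianchi-1}), I would substitute the same values into (\ref{eq-diff-bianchi-1}). The left-hand term $\overline{\partial}(S(\partial,\partial))$ vanishes because $S(\partial,\partial)\equiv 0$ everywhere, not merely at a point. On the right, the terms with $\kappa_{\sc np}$, $\sigma_{\sc np}$, $\overline{\kappa_{\sc np}}$ and $S(\partial,\partial)$ all vanish, leaving only $(\epsilon_{\sc np}-2\overline{\rho_{\sc np}}-\rho_{\sc np})S(\partial,\xi)$.

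There is no genuine obstacle: the whole argument is a substitution. The only points needing care are two. First, the vanishing of $S(\partial,\partial)$ holds identically, which is what justifies dropping its derivative. Second, the sign and conjugation conventions must be carried over faithfully from Aazami's identities. Optionally, one could then insert the explicit expressions (\ref{eq-tsm-gen-Sach-3})--(\ref{eq-tsm-gen-Sach-5}), but the statement does not require this.
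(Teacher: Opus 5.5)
Your substitution is exactly the paper's one-line proof. You are right to say explicitly that $S(\partial,\partial)\equiv0$, from \eqref{eq-tsm-gen-Sach-2}, is needed to remove the $S(\partial,\partial)$-terms; the paper cites only \eqref{eq-tsm-NP}.

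The gap is the step you flagged but did not check. As reproduced in the paper, \eqref{eq-diff-bianchi-2} and \eqref{eq-diff-bianchi-1} are not correct for the spin coefficients defined in \eqref{eq-spin-coe-1}--\eqref{eq-spin-coe-2}. A faithful substitution therefore yields false identities, and the proposition as stated does not hold.

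For \eqref{eq-PN-tsm-diff-bianchi-2}, take the Kenmotsu structure $\xi=\partial_t$ on ${\Bbb H}^3$ with $g=dt^2+e^{2t}(dx^2+dy^2)$ and $\partial=e^{-t}(\partial_x-i\partial_y)/\sqrt2$. Here $\rho_{\sc np}=1$, $\beta_{\sc np}=0$ and $S=-2g$. The left side vanishes, while the right side is $-(\rho_{\sc np}+\overline{\rho_{\sc np}})S(\xi,\xi)=4$. There is also a quick sanity test you could have run: the left side is real in every frame, whereas the term $-\beta_{\sc np}S(\partial,\overline{\partial})$ makes the right side non-real in general.

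For \eqref{eq-PN-tsm-diff-bianchi-1}, insert the paper's own formulas $S(\partial,\xi)=-\partial(\rho_{\sc np})$, $S(\xi,\xi)=-2\xi(\rho_{\sc np})-2\rho_{\sc np}^2$ and $[\partial,\xi]=(\overline{\rho_{\sc np}}-\epsilon_{\sc np})\partial$. The left side becomes $(2\rho_{\sc np}+\overline{\rho_{\sc np}}-\epsilon_{\sc np})\,\partial(\rho_{\sc np})$. The right side is $(\rho_{\sc np}+2\overline{\rho_{\sc np}}-\epsilon_{\sc np})\,\partial(\rho_{\sc np})$. So the stated identity is equivalent to $\alpha\,\partial(\rho_{\sc np})=0$, which fails in the paper's ${\cal C}_6$ example with $\rho_{\sc np}=i/x^1$.

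To obtain a true statement, derive the identities rather than transcribing them. Use the contracted Bianchi identity $({\rm div}\,S)(X)=\frac12 X({\rm tr}\,S)$, where
$({\rm div}\,S)(X)=(\nabla_\partial S)(\overline{\partial},X)+(\nabla_{\overline{\partial}}S)(\partial,X)+(\nabla_\xi S)(\xi,X)$ and ${\rm tr}\,S=2S(\partial,\overline{\partial})+S(\xi,\xi)$. Evaluate it at $X=\xi$ and $X=\partial$, expanding with \eqref{eq-PNF-nabla(partial,partial)}--\eqref{eq-PNF-nabla(xi,xi)}. With $\kappa_{\sc np}=\sigma_{\sc np}=0$ and $S(\partial,\partial)=0$ this gives
\begin{align*}
&\partial(S(\overline{\partial},\xi))+\overline{\partial}(S(\partial,\xi))-\xi(S(\partial,\overline{\partial}))+\tfrac12\,\xi(S(\xi,\xi))\\
&\qquad=-(\rho_{\sc np}+\overline{\rho_{\sc np}})\bigl(S(\xi,\xi)-S(\partial,\overline{\partial})\bigr)-\overline{\beta_{\sc np}}\,S(\partial,\xi)-\beta_{\sc np}\,S(\overline{\partial},\xi),\\
&\xi(S(\partial,\xi))-\tfrac12\,\partial(S(\xi,\xi))=(\epsilon_{\sc np}-2\rho_{\sc np}-\overline{\rho_{\sc np}})\,S(\partial,\xi).
\end{align*}
These pass both tests above and transform correctly under the gauge rules \eqref{eq-NPF-gauge-spin-coe-1}--\eqref{eq-NPF-gauge-spin-coe-2}.
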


\noindent {\bf Proof.} Using (\ref{eq-tsm-NP}) in (\ref{eq-diff-bianchi-2}) and
(\ref{eq-diff-bianchi-1}), we obtain (\ref{eq-PN-tsm-diff-bianchi-2}) and
(\ref{eq-PN-tsm-diff-bianchi-1}), respectively. $\blacksquare$

Let $(M,g)$ be a Riemannian manifold. Then the scalar curvature $\tau$ of $M$
is given by
\begin{equation}
\tau=\frac{1}{2}{\rm tr}(S),  \label{eq-tau}
\end{equation}
where $S$ is the Ricci tensor of $M$.

\begin{lem}
Let $(M,g)$ be a $3$-dimensional Riemannian manifold equipped with a
Newman--Penrose frame $(\partial,\overline{\partial},\xi)$. Then the scalar
curvature $\tau$ of $M$ is given by
\begin{equation}
\tau = S(\partial,\overline{\partial}) + \frac{1}{2} S(\xi,\xi),
\label{eq-scalar-NP}
\end{equation}
where $S$ is the Ricci tensor of $M$.
\end{lem}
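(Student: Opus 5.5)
The plan is to compute the trace of $S$ with respect to the complex frame $(\partial,\overline{\partial},\xi)$. Since this frame is not orthonormal, I will express the trace using the inverse of the metric matrix. Equivalently, I can pass back to the real orthonormal frame $(e_1,e_2,\xi)$, where ${\rm tr}(S)=S(e_1,e_1)+S(e_2,e_2)+S(\xi,\xi)$, and then convert into Newman--Penrose quantities. I will use the convention (\ref{eq-tau}) of the paper, $\tau=\frac12{\rm tr}(S)$, exactly as stated.

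The key step is to invert (\ref{eq-partial-partial-bar}), which gives
\[
e_1=\frac{1}{\sqrt2}(\partial+\overline{\partial}), \qquad e_2=\frac{i}{\sqrt2}(\partial-\overline{\partial}).
\]
I then extend $S$ complex-bilinearly and use its symmetry. This yields
\[
S(e_1,e_1)=\tfrac12\bigl(S(\partial,\partial)+2S(\partial,\overline{\partial})+S(\overline{\partial},\overline{\partial})\bigr)
\]
and
\[
S(e_2,e_2)=-\tfrac12\bigl(S(\partial,\partial)-2S(\partial,\overline{\partial})+S(\overline{\partial},\overline{\partial})\bigr).
\]
Adding the two expressions, the $S(\partial,\partial)$ and $S(\overline{\partial},\overline{\partial})$ terms cancel, so
\[
S(e_1,e_1)+S(e_2,e_2)=2S(\partial,\overline{\partial}).
\]
Hence ${\rm tr}(S)=2S(\partial,\overline{\partial})+S(\xi,\xi)$.

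An equivalent route is to observe from (\ref{eq-NPF-metric}) that the metric matrix in the complex frame has the off-diagonal $2\times2$ block $\begin{pmatrix}0&1\\1&0\end{pmatrix}$ together with the entry $1$. This matrix is its own inverse, so
\[
g^{ab}S_{ab}=S(\partial,\overline{\partial})+S(\overline{\partial},\partial)+S(\xi,\xi)=2S(\partial,\overline{\partial})+S(\xi,\xi).
\]

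Finally, multiplying by $\frac12$ as in (\ref{eq-tau}) gives (\ref{eq-scalar-NP}). There is no genuine obstacle here. The only care needed is in the signs from $i^2=-1$ when computing $S(e_2,e_2)$, and in the paper's normalization $\tau=\frac12{\rm tr}(S)$. That normalization is what makes the formula come out as $S(\partial,\overline{\partial})+\frac12 S(\xi,\xi)$ rather than twice that.
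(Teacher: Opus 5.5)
Your proposal is correct and follows the paper's proof. The paper likewise writes $\tau=\frac12\{S(e_1,e_1)+S(e_2,e_2)+S(\xi,\xi)\}$ and substitutes $e_1=\frac{1}{\sqrt2}(\partial+\overline{\partial})$, $e_2=\frac{i}{\sqrt2}(\partial-\overline{\partial})$, so that by complex bilinearity and symmetry of $S$ the $S(\partial,\partial)$ and $S(\overline{\partial},\overline{\partial})$ terms cancel; your alternative check via the self-inverse Gram matrix of $(\partial,\overline{\partial},\xi)$ is also valid.
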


\noindent {\bf Proof.} Recall that $(e_1,e_2,\xi)$ be a real orthonormal frame
on $M$. Then from (\ref{eq-tau}), we have
\begin{equation}
\tau = \frac{1}{2}\{S(e_1,e_1)+S(e_2,e_2)+S(\xi,\xi)\}.
\label{eq-scalar-NP-1}
\end{equation}
From (\ref{eq-partial-partial-bar}), we have
\begin{equation}
e_{1}=\frac{1}{\sqrt{2}}(\partial+\overline{\partial}), \qquad
e_{2}=\frac{i}{\sqrt{2}}(\partial-\overline{\partial}).
\label{eq-scalar-NP-2}
\end{equation}
Using (\ref{eq-scalar-NP-2}) in (\ref{eq-scalar-NP-1}), we get
(\ref{eq-scalar-NP}). $\blacksquare$

\begin{prop}
Let $\left(M,\varphi,\xi,\eta,g\right) $ be a $3$-dimensional trans-Sasakian
manifold of type $(\alpha,\beta)$ equipped with a Newman--Penrose frame
$(\partial,\overline{\partial},\xi)$. Then the scalar curvature $\tau$ of $M$
is given by
\begin{equation}
\tau =-2\xi \left(\rho_{\sc np} \right) -2\rho_{\sc np}^{2} -|\rho_{\sc np}
|^{2}-2|\beta_{\sc np} |^{2} -(\rho_{\sc np} -\overline{\rho_{\sc
np}})\epsilon_{\sc np} - \partial (\overline{\beta_{\sc np}})-
\overline{\partial}(\beta_{\sc np}).  \label{eq-NP-tsm-scalar-cur}
\end{equation}
\end{prop}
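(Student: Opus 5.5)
The plan is to combine the general formula for the scalar curvature in the Newman--Penrose frame with the expressions for the Ricci components already obtained for trans-Sasakian manifolds. By the Lemma giving (\ref{eq-scalar-NP}), on any $3$-dimensional Riemannian manifold we have
\[
\tau=S(\partial,\overline{\partial})+\frac12 S(\xi,\xi),
\]
so it suffices to substitute the trans-Sasakian values of $S(\partial,\overline{\partial})$ and $S(\xi,\xi)$.

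First I would insert (\ref{eq-tsm-gen-Sach-1}), which gives
\[
\frac12 S(\xi,\xi)=-\xi(\rho_{\sc np})-\rho_{\sc np}^{2}.
\]
Next I would insert (\ref{eq-tsm-gen-Sach-5}) for $S(\partial,\overline{\partial})$. That expression already contains the terms
\[
-|\rho_{\sc np}|^2-2|\beta_{\sc np}|^2-(\rho_{\sc np}-\overline{\rho_{\sc np}})\epsilon_{\sc np}-\partial(\overline{\beta_{\sc np}})-\overline{\partial}(\beta_{\sc np}),
\]
together with a further $-\xi(\rho_{\sc np})-\rho_{\sc np}^2$. Adding the two contributions doubles the $\xi(\rho_{\sc np})$ and $\rho_{\sc np}^2$ terms, which produces exactly (\ref{eq-NP-tsm-scalar-cur}).

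As a consistency check, one could go back to (\ref{eq-gen-Sach-5}) directly. Solving it for $S(\partial,\overline{\partial})$ with $\sigma_{\sc np}=0$ gives
\[
S(\partial,\overline{\partial})=-|\rho_{\sc np}|^2-2|\beta_{\sc np}|^2-(\rho_{\sc np}-\overline{\rho_{\sc np}})\epsilon_{\sc np}-\partial(\overline{\beta_{\sc np}})-\overline{\partial}(\beta_{\sc np})+\frac12S(\xi,\xi).
\]
Hence
\[
\tau=-|\rho_{\sc np}|^2-2|\beta_{\sc np}|^2-(\rho_{\sc np}-\overline{\rho_{\sc np}})\epsilon_{\sc np}-\partial(\overline{\beta_{\sc np}})-\overline{\partial}(\beta_{\sc np})+S(\xi,\xi),
\]
and with $S(\xi,\xi)=-2\xi(\rho_{\sc np})-2\rho_{\sc np}^2$ this is again (\ref{eq-NP-tsm-scalar-cur}).

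The proof is a direct substitution, so I expect no real obstacle. The one point to watch is the bookkeeping of the $\frac12 S(\xi,\xi)$ term, which appears both in the Sachs equation and in the trace formula. It must be counted so that the total coefficient of $S(\xi,\xi)$ in $\tau$ is $1$, which gives the factor $2$ on $\xi(\rho_{\sc np})$ and $\rho_{\sc np}^2$.
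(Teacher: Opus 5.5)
Your proposal is correct and follows the paper's own proof: the paper likewise substitutes (\ref{eq-tsm-gen-Sach-1}) and (\ref{eq-tsm-gen-Sach-5}) into the trace formula (\ref{eq-scalar-NP}), and you count the doubled $\xi(\rho_{\sc np})$ and $\rho_{\sc np}^{2}$ terms correctly. Your consistency check via (\ref{eq-gen-Sach-5}) is the same computation with the intermediate Ricci formula written out.
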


\noindent {\bf Proof.} Substituting (\ref{eq-tsm-gen-Sach-1}) and
(\ref{eq-tsm-gen-Sach-5}) into (\ref{eq-scalar-NP}), we get
(\ref{eq-NP-tsm-scalar-cur}). $\blacksquare$

\begin{cor}
Let $\left(M,\varphi,\xi,\eta,g\right) $ be a $3$-dimensional Sasakian manifold
equipped with a Newman--Penrose frame $(\partial,\overline{\partial},\xi)$.
Then the scalar curvature $\tau$ of $M$ is given by
\begin{equation}
\tau = 1-2|\beta_{\sc np} |^{2}-2i\epsilon_{\sc np} -\partial
(\overline{\beta_{\sc np}})-\overline{\partial}(\beta_{\sc np}).
\label{eq-NP-sm-scalar-cur}
\end{equation}
\end{cor}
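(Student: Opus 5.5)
The plan is to specialize the general trans-Sasakian scalar curvature formula (\ref{eq-NP-tsm-scalar-cur}) to the Sasakian case, exactly as the preceding corollaries specialize (\ref{eq-tsm-gen-Sach-1}) and (\ref{eq-tsm-gen-Sach-5}). By Remark~\ref{rem-tS-cases}(c), a $3$-dimensional Sasakian manifold is trans-Sasakian with $\alpha=1$, $\beta=0$. By (\ref{eq-tsm-NP}) this gives $\rho_{\sc np}=i$. In particular $\rho_{\sc np}$ is constant, so $\xi(\rho_{\sc np})=0$.

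Next I substitute term by term into (\ref{eq-NP-tsm-scalar-cur}):
\[
-2\xi(\rho_{\sc np})=0,\qquad -2\rho_{\sc np}^{2}=-2i^{2}=2,\qquad -|\rho_{\sc np}|^{2}=-1,
\]
\[
-(\rho_{\sc np}-\overline{\rho_{\sc np}})\epsilon_{\sc np}=-(i-(-i))\epsilon_{\sc np}=-2i\epsilon_{\sc np}.
\]
The terms $-2|\beta_{\sc np}|^{2}-\partial(\overline{\beta_{\sc np}})-\overline{\partial}(\beta_{\sc np})$ are unchanged. Summing gives $2-1=1$ for the constant part, and hence
\[
\tau=1-2|\beta_{\sc np}|^{2}-2i\epsilon_{\sc np}-\partial(\overline{\beta_{\sc np}})-\overline{\partial}(\beta_{\sc np}),
\]
which is (\ref{eq-NP-sm-scalar-cur}).

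As a consistency check, the same value follows from (\ref{eq-scalar-NP}) using the Sasakian corollary. There $S(\xi,\xi)=2$, so $\tfrac12 S(\xi,\xi)=1$. The only remaining ingredient is then $S(\partial,\overline{\partial})$ from (\ref{eq-sm-gen-Sach-5}).

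There is no real obstacle. The only point needing care is sign bookkeeping, namely $\rho_{\sc np}^{2}=-1$ against $|\rho_{\sc np}|^{2}=1$. This produces the constant $1$ rather than $3$ or $-1$.
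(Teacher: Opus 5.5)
Your proposal is correct and takes the same route as the paper: the corollary is just (\ref{eq-NP-tsm-scalar-cur}) with $\rho_{\sc np}=i$, which gives $\xi(\rho_{\sc np})=0$, $-2\rho_{\sc np}^{2}=2$, $-|\rho_{\sc np}|^{2}=-1$ and $-(\rho_{\sc np}-\overline{\rho_{\sc np}})\epsilon_{\sc np}=-2i\epsilon_{\sc np}$. Your cross-check via (\ref{eq-scalar-NP}), using $S(\xi,\xi)=2$ and (\ref{eq-sm-gen-Sach-5}), agrees with this.
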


\begin{cor}
Let $\left(M,\varphi,\xi,\eta,g\right) $ be a $3$-dimensional Kenmotsu manifold
equipped with a Newman--Penrose frame $(\partial,\overline{\partial},\xi)$.
Then the scalar curvature $\tau$ of $M$ is given by
\begin{equation}
\tau =-3-2|\beta_{\sc np} |^{2}-\partial (\overline{\beta_{\sc np}})-
\overline{\partial}(\beta_{\sc np}).  \label{eq-NP-Ken-scalar-cur}
\end{equation}
\end{cor}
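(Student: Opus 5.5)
The corollary is the Kenmotsu specialization of the scalar curvature formula (\ref{eq-NP-tsm-scalar-cur}). The plan is simply to substitute the Kenmotsu values of the spin coefficients. By Remark~\ref{rem-tS-cases}(f), a $3$-dimensional Kenmotsu manifold is trans-Sasakian of type $(\alpha,\beta)=(0,1)$. Hence (\ref{eq-tsm-NP}) gives
\[
\kappa_{\sc np}=\sigma_{\sc np}=0,\qquad \rho_{\sc np}=\overline{\rho_{\sc np}}=1 .
\]

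Because $\rho_{\sc np}$ is constant, $\xi(\rho_{\sc np})=0$. The remaining $\rho_{\sc np}$-dependent terms evaluate as
\[
\rho_{\sc np}^{2}=1,\qquad |\rho_{\sc np}|^{2}=1,\qquad \rho_{\sc np}-\overline{\rho_{\sc np}}=0 .
\]
The last identity kills the $\epsilon_{\sc np}$-term, which is why no $\epsilon_{\sc np}$ survives (in contrast with the Sasakian case, where $\rho_{\sc np}-\overline{\rho_{\sc np}}=2i$). Substituting into (\ref{eq-NP-tsm-scalar-cur}) gives
\[
\tau=-2\cdot 0-2\cdot 1-1-2|\beta_{\sc np}|^{2}-0-\partial(\overline{\beta_{\sc np}})-\overline{\partial}(\beta_{\sc np}),
\]
which is (\ref{eq-NP-Ken-scalar-cur}).

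As a consistency check, I would also recompute $\tau$ through (\ref{eq-scalar-NP}) using the Kenmotsu Ricci components (\ref{eq-Kenm-gen-Sach-1}) and (\ref{eq-Kenm-gen-Sach-5}):
\[
\tau=\bigl(-2-2|\beta_{\sc np}|^{2}-\partial(\overline{\beta_{\sc np}})-\overline{\partial}(\beta_{\sc np})\bigr)+\tfrac12(-2),
\]
which again equals $-3-2|\beta_{\sc np}|^{2}-\partial(\overline{\beta_{\sc np}})-\overline{\partial}(\beta_{\sc np})$.

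There is no real obstacle here. The only point needing care is the sign and coefficient bookkeeping of the $\rho_{\sc np}^{2}$ and $|\rho_{\sc np}|^{2}$ contributions, namely that they combine as $-2-1=-3$. The agreement between the two routes above confirms this.
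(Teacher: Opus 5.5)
Your proposal is correct and matches the paper's argument. The paper states this corollary without a written proof, as the specialization of (\ref{eq-NP-tsm-scalar-cur}) to $\rho_{\sc np}=1$, so that $\xi(\rho_{\sc np})=0$, $\rho_{\sc np}^{2}=|\rho_{\sc np}|^{2}=1$ and the $\epsilon_{\sc np}$-term drops out. Your cross-check through (\ref{eq-scalar-NP}) with (\ref{eq-Kenm-gen-Sach-1}) and (\ref{eq-Kenm-gen-Sach-5}) is the same computation in a different order.
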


\begin{cor}
Let $\left(M,\varphi,\xi,\eta,g\right) $ be a $3$-dimensional cosymplectic
manifold equipped with a Newman--Penrose frame
$(\partial,\overline{\partial},\xi)$. Then the scalar curvature $\tau$ of $M$
is given by
\begin{equation}
\tau =-2|\beta_{\sc np} |^{2}-\partial (\overline{\beta_{\sc np}})
-\overline{\partial}(\beta_{\sc np}). \label{eq-NP-cosym-scalar-cur}
\end{equation}
\end{cor}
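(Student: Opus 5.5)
This is a direct specialization of the general scalar curvature formula (\ref{eq-NP-tsm-scalar-cur}) to the cosymplectic case. By Remark~\ref{rem-tS-cases}(g), a $3$-dimensional cosymplectic manifold is exactly a trans-Sasakian manifold of type $(0,0)$. Equivalently, in Newman--Penrose form (\ref{eq-tsm-NP}), we have $\kappa_{\sc np}=\sigma_{\sc np}=0$ and $\rho_{\sc np}=0$ identically on $M$.

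I would substitute $\rho_{\sc np}\equiv0$ into (\ref{eq-NP-tsm-scalar-cur}) and note which terms drop out.

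\begin{itemize}
\item Since $\rho_{\sc np}$ vanishes identically, its derivative along $\xi$ also vanishes, so $\xi(\rho_{\sc np})=0$.
\item The terms $-2\rho_{\sc np}^{2}$ and $-|\rho_{\sc np}|^{2}$ vanish.
\item The factor $(\rho_{\sc np}-\overline{\rho_{\sc np}})$ vanishes, so the term $-(\rho_{\sc np}-\overline{\rho_{\sc np}})\epsilon_{\sc np}$ disappears, whatever the (purely imaginary) value of $\epsilon_{\sc np}$.
\end{itemize}

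Only $-2|\beta_{\sc np}|^{2}-\partial(\overline{\beta_{\sc np}})-\overline{\partial}(\beta_{\sc np})$ remains, which is (\ref{eq-NP-cosym-scalar-cur}).

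As a consistency check, I would recompute through the lemma (\ref{eq-scalar-NP}), $\tau=S(\partial,\overline{\partial})+\tfrac12 S(\xi,\xi)$, using the cosymplectic Ricci corollary. There $S(\xi,\xi)=0$ by (\ref{eq-cosym-gen-Sach-1}), and $S(\partial,\overline{\partial})$ is given by (\ref{eq-cosym-gen-Sach-5}), so $\tau=S(\partial,\overline{\partial})$, which matches the formula above.

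There is no genuine obstacle. The only point needing care is that $\rho_{\sc np}=0$ holds as a function, not just at a point, so that its derivatives vanish too. This is guaranteed because $\alpha=\beta=0$ identically in the cosymplectic case.
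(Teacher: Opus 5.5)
Your proposal is correct and follows the paper's route: the corollary is the general trans-Sasakian scalar curvature formula (\ref{eq-NP-tsm-scalar-cur}) with $\rho_{\sc np}\equiv 0$ substituted (type $(0,0)$), exactly as you do. Your cross-check through $\tau=S(\partial,\overline{\partial})+\frac12 S(\xi,\xi)$ and the cosymplectic Ricci corollary is a harmless extra confirmation.
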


The Kulkarni-Nomizu product $T_{1}\circledast T_{2}$ of any $(0,2)$-tensors
$T_{1}$ and $T_{2}$ on a smooth manifold $M$ is a $(0,4)$-tensor defined by
(see \cite[Tripathi 2025, Eq.~(3.1)]{Tri-25-CM})
\begin{eqnarray*}
T_{1}\circledast T_{2}(X,Y,Z,W) &=&T_{1}(Y,Z)T_{2}(X,W)-T_{1}(X,Z)T_{2}(Y,W)
\nonumber\\ &&+T_{2}(Y,Z)T_{1}(X,W)-T_{2}(X,Z)T_{1}(Y,W)
\end{eqnarray*}
for all vector fields $X$, $Y$, $Z$, $W$ on $M$. Now, on any $3$-dimensional
Riemannian manifold $(M,g)$, the Riemann-Christoffel curvature tensor $R$ can
be given by (cf. \cite[Blair {\em et al.} 1990]{Blair-Kouf-Sharma-90-Kodai},
\cite[Tripathi 2026, Theorem~1.37]{Tri-26-Book})
\begin{equation}
R = g \circledast \left(S-\frac{\tau}{2}g\right),  \label{eq-3D-Riem-cur}
\end{equation}
where $S$ is the Ricci tensor of $M$, and $\tau$ is the scalar curvature of $M$
defined by (\ref{eq-tau}).

\begin{prop}
Let $\left(M,\varphi,\xi,\eta,g\right) $ be a $3$-dimensional trans-Sasakian
manifold of type $(\alpha,\beta)$ equipped with a Newman--Penrose frame
$(\partial,\overline{\partial},\xi)$. Then the Riemann--Christoffel curvature
tensor $R$ of $M$ is given by
\begin{equation}
R(\partial,\overline{\partial},\overline{\partial},\partial)  = |\rho_{\sc
np}|^{2}+2|\beta_{\sc np}|^{2}+(\rho_{\sc np} -\overline{\rho_{\sc
np}})\epsilon_{\sc np} +\partial (\overline{\beta_{\sc
np}})+\overline{\partial}(\beta_{\sc np}),  \label{eq-NP-tsm-Riem-cur-1}
\end{equation}
\begin{equation}
R(\partial,\overline{\partial},\xi,\partial) = \partial (\rho_{\sc np}),
\label{eq-NP-tsm-Riem-cur-4}
\end{equation}
\begin{equation}
R(\partial,\xi,\xi,\partial) = 0,  \label{eq-NP-tsm-Riem-cur-2}
\end{equation}
\begin{equation}
R(\partial,\xi,\xi,\overline{\partial}) = -\xi \left(\rho_{\sc np} \right)
-\rho_{\sc np}^{2}. \label{eq-NP-tsm-Riem-cur-3}
\end{equation}
\end{prop}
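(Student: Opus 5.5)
The plan is to evaluate the curvature identity \eqref{eq-3D-Riem-cur} on Newman--Penrose frame vectors. Since $R=g\circledast L$ with $L:=S-\frac{\tau}{2}g$, the definition of the Kulkarni--Nomizu product with $T_1=g$, $T_2=L$ gives
\[
R(X,Y,Z,W)=g(Y,Z)L(X,W)-g(X,Z)L(Y,W)+L(Y,Z)g(X,W)-L(X,Z)g(Y,W).
\]
We insert $X,Y,Z,W\in\{\partial,\overline{\partial},\xi\}$. By \eqref{eq-NPF-metric}, the only nonzero metric pairings are $g(\partial,\overline{\partial})=g(\xi,\xi)=1$, so each of the four components collapses to one or two terms in $L$. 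We then substitute the Ricci components \eqref{eq-tsm-gen-Sach-2}--\eqref{eq-tsm-gen-Sach-5} and use \eqref{eq-scalar-NP}, namely $\tau=S(\partial,\overline{\partial})+\frac12 S(\xi,\xi)$, to remove $\tau$.

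Carrying this out in order:

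\textbf{Component \eqref{eq-NP-tsm-Riem-cur-1}.} For $(\partial,\overline{\partial},\overline{\partial},\partial)$ only the second and fourth terms survive, giving
\[
-2L(\partial,\overline{\partial})=-2S(\partial,\overline{\partial})+\tau=-S(\partial,\overline{\partial})+\tfrac12S(\xi,\xi).
\]
Substituting \eqref{eq-tsm-gen-Sach-5} and \eqref{eq-tsm-gen-Sach-1}, the terms $\xi(\rho_{\sc np})+\rho_{\sc np}^2$ in $-S(\partial,\overline{\partial})$ cancel against $\frac12S(\xi,\xi)=-\xi(\rho_{\sc np})-\rho_{\sc np}^2$. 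This leaves exactly \eqref{eq-NP-tsm-Riem-cur-1}.

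\textbf{Component \eqref{eq-NP-tsm-Riem-cur-4}.} For $(\partial,\overline{\partial},\xi,\partial)$ the terms containing $g(\overline{\partial},\xi)$, $g(\partial,\xi)$ or $g(\partial,\partial)$ vanish. Only $-L(\partial,\xi)g(\overline{\partial},\partial)=-S(\partial,\xi)$ remains, and this equals $\partial(\rho_{\sc np})$ by \eqref{eq-tsm-gen-Sach-3}.

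\textbf{Component \eqref{eq-NP-tsm-Riem-cur-2}.} For $(\partial,\xi,\xi,\partial)$ only $g(\xi,\xi)L(\partial,\partial)=S(\partial,\partial)$ survives, since $g(\partial,\partial)=0$. This vanishes by \eqref{eq-tsm-gen-Sach-2}, i.e.\ by shear-freeness.

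\textbf{Component \eqref{eq-NP-tsm-Riem-cur-3}.} For $(\partial,\xi,\xi,\overline{\partial})$ we get
\[
L(\partial,\overline{\partial})+L(\xi,\xi)=S(\partial,\overline{\partial})+S(\xi,\xi)-\tau=\tfrac12 S(\xi,\xi),
\]
which is $-\xi(\rho_{\sc np})-\rho_{\sc np}^2$ by \eqref{eq-tsm-gen-Sach-1}.

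The argument is purely algebraic once the earlier Ricci formulas are available, so there is no analytic obstacle. The only delicate point is bookkeeping: the factor $\frac12$ in the paper's normalisation \eqref{eq-tau} of $\tau$, and the complex bilinear (not Hermitian) pairings, which make $g(\partial,\partial)=0$ rather than $1$.

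As a cross-check I would compare with the earlier proposition giving $R(\partial,\xi)\xi$. That comparison will show $R(\partial,\xi,\xi,\overline{\partial})$ against $g(R(\partial,\xi)\xi,\overline{\partial})$ up to the sign and conjugation conventions for the $(0,4)$-tensor. I would record this but not rely on it, since the statement is derived directly from \eqref{eq-3D-Riem-cur}.
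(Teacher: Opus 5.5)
Your proposal is correct and is essentially the paper's own proof. You evaluate $R=g\circledast(S-\frac{\tau}{2}g)$ on the Newman--Penrose frame and substitute the Ricci components \eqref{eq-tsm-gen-Sach-2}--\eqref{eq-tsm-gen-Sach-5} and the scalar curvature; you use \eqref{eq-scalar-NP} where the paper uses its expanded form \eqref{eq-NP-tsm-scalar-cur}, which amounts to the same thing, and all four of your component reductions check out. Your cross-check also holds exactly, not merely up to conventions: $g(R(\partial,\xi)\xi,\overline{\partial})=-\overline{\rho_{\sc np}}^{2}-\xi(\overline{\rho_{\sc np}})$ coincides with $-\rho_{\sc np}^{2}-\xi(\rho_{\sc np})$ by \eqref{eq-NP-2alphabeta+xi(alpha)=0}.
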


\noindent {\bf Proof.} By a direct computation using (\ref{eq-NPF-metric}),
(\ref{eq-tsm-gen-Sach-3}), (\ref{eq-tsm-gen-Sach-1}), (\ref{eq-tsm-gen-Sach-5})
and (\ref{eq-NP-tsm-scalar-cur}) in (\ref{eq-3D-Riem-cur}), we obtain
(\ref{eq-NP-tsm-Riem-cur-1}), (\ref{eq-NP-tsm-Riem-cur-4}),
(\ref{eq-NP-tsm-Riem-cur-2}), and (\ref{eq-NP-tsm-Riem-cur-3}). $\blacksquare$

\begin{cor}
Let $\left(M,\varphi,\xi,\eta,g\right)$ be a $3$-dimensional Sasakian manifold
equipped with a Newman--Penrose frame $(\partial,\overline{\partial},\xi)$.
Then the Riemann--Christoffel curvature tensor $R$ of $M$ is given by
\begin{equation}
R(\partial,\overline{\partial},\overline{\partial},\partial) = 1+2|\beta_{\sc
np} |^{2}+2i\epsilon_{\sc np} +\partial (\overline{\beta_{\sc
np}})+\overline{\partial}(\beta_{\sc np}), \label{eq-NP-sm-Riem-cur-1}
\end{equation}
\begin{equation}
R(\partial,\overline{\partial},\xi,\partial) = 0, \label{eq-NP-sm-Riem-cur-4}
\end{equation}
\begin{equation}
R(\partial,\xi,\xi,\partial) = 0,  \label{eq-NP-sm-Riem-cur-2}
\end{equation}
\begin{equation}
R(\partial,\xi,\xi,\overline{\partial}) = 1.  \label{eq-NP-sm-Riem-cur-3}
\end{equation}
\end{cor}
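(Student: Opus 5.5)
This is a specialization of the preceding proposition on the Riemann--Christoffel curvature of a trans-Sasakian manifold. A $3$-dimensional Sasakian manifold is the trans-Sasakian case $\alpha=1$, $\beta=0$. By Remark~\ref{rem-tS-cases}(c) and (\ref{eq-tsm-NP}) this means
\[
\kappa_{\sc np}=\sigma_{\sc np}=0,\qquad \rho_{\sc np}=i,\qquad \overline{\rho_{\sc np}}=-i .
\]
Since $\rho_{\sc np}$ is constant, every frame derivative of it vanishes: $\partial(\rho_{\sc np})=\overline{\partial}(\rho_{\sc np})=\xi(\rho_{\sc np})=0$. The plan is to substitute these values into (\ref{eq-NP-tsm-Riem-cur-1})--(\ref{eq-NP-tsm-Riem-cur-3}) one at a time.

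The substitutions go as follows.

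For (\ref{eq-NP-tsm-Riem-cur-1}) we have $|\rho_{\sc np}|^2=1$ and $(\rho_{\sc np}-\overline{\rho_{\sc np}})\epsilon_{\sc np}=2i\epsilon_{\sc np}$. The terms in $\beta_{\sc np}$ are unaffected, and this gives (\ref{eq-NP-sm-Riem-cur-1}).

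For (\ref{eq-NP-tsm-Riem-cur-4}), $\partial(\rho_{\sc np})=0$ gives (\ref{eq-NP-sm-Riem-cur-4}).

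Equation (\ref{eq-NP-tsm-Riem-cur-2}) carries over unchanged as (\ref{eq-NP-sm-Riem-cur-2}).

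For (\ref{eq-NP-tsm-Riem-cur-3}) we get $-\xi(\rho_{\sc np})-\rho_{\sc np}^2=0-i^2=1$, which is (\ref{eq-NP-sm-Riem-cur-3}).

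As a consistency check, I will compare with the Sasakian Ricci corollary. There $S(\xi,\xi)=2$ and $S(\partial,\xi)=0$, which match $-2\xi(\rho_{\sc np})-2\rho_{\sc np}^2=2$ and $-\partial(\rho_{\sc np})=0$ from (\ref{eq-tsm-gen-Sach-1}) and (\ref{eq-tsm-gen-Sach-3}). I will also cross-check (\ref{eq-NP-sm-Riem-cur-3}) against the classical identity $R(X,\xi)\xi=X$ for $X\perp\xi$ in a Sasakian manifold. Taking $X=\partial$ and pairing with $\overline{\partial}$ gives $1$, in agreement.

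I expect no real obstacle, since everything is direct substitution. The only point needing care is sign and conjugation bookkeeping: one must use $\rho_{\sc np}=i$ and not $-i$, consistent with the convention $\rho_{\sc np}=\beta+i\alpha$, and the term $(\rho_{\sc np}-\overline{\rho_{\sc np}})\epsilon_{\sc np}$ must come out as $2i\epsilon_{\sc np}$. The scalar-curvature corollary (\ref{eq-NP-sm-scalar-cur}) contains the same term $-2i\epsilon_{\sc np}$, which serves as a check.
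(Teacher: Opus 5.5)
Your proposal is correct and follows the paper's intended route: the corollary is stated without a separate proof as the specialization $\rho_{\sc np}=i$ of the preceding trans-Sasakian curvature proposition, which gives $\kappa_{\sc np}=\sigma_{\sc np}=0$ and makes every frame derivative of $\rho_{\sc np}$ vanish. Your term-by-term substitutions, including $(\rho_{\sc np}-\overline{\rho_{\sc np}})\epsilon_{\sc np}=2i\epsilon_{\sc np}$ and $-\xi(\rho_{\sc np})-\rho_{\sc np}^{2}=1$, are right, and the check via $R(X,\xi)\xi=X$ for $X\perp\xi$ independently confirms the last identity.
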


\begin{cor}
Let $\left(M,\varphi,\xi,\eta,g\right) $ be a $3$-dimensional Kenmotsu manifold
equipped with a Newman--Penrose frame $(\partial,\overline{\partial},\xi)$.
Then the Riemann--Christoffel curvature tensor $R$ of $M$ is given by
\begin{equation}
R(\partial,\overline{\partial},\overline{\partial},\partial) = 1+2|\beta_{\sc
np} |^{2}+\partial (\overline{\beta_{\sc np}})+\overline{\partial}(\beta_{\sc
np}), \label{eq-NP-Ken-Riem-cur-1}
\end{equation}
\begin{equation}
R(\partial,\overline{\partial},\xi,\partial) = 0, \label{eq-NP-Ken-Riem-cur-4}
\end{equation}
\begin{equation}
R(\partial,\xi,\xi,\partial) = 0,  \label{eq-NP-Ken-Riem-cur-2}
\end{equation}
\begin{equation}
R(\partial,\xi,\xi,\overline{\partial}) = -1.  \label{eq-NP-Ken-Riem-cur-3}
\end{equation}
\end{cor}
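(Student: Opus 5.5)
The corollary is the specialization of the preceding proposition on the Riemann--Christoffel tensor of a trans-Sasakian manifold to the Kenmotsu case. By Remark~\ref{rem-tS-cases}(f), a $3$-dimensional Kenmotsu manifold is trans-Sasakian with $\alpha=0$, $\beta=1$. Equivalently, by (\ref{eq-tsm-NP}) we have $\kappa_{\sc np}=\sigma_{\sc np}=0$ and $\rho_{\sc np}=1$, a real constant. I therefore plan to substitute $\rho_{\sc np}=\overline{\rho_{\sc np}}=1$ into (\ref{eq-NP-tsm-Riem-cur-1})--(\ref{eq-NP-tsm-Riem-cur-3}) and simplify each term.

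For (\ref{eq-NP-tsm-Riem-cur-1}), substitution gives $|\rho_{\sc np}|^{2}=1$ and $(\rho_{\sc np}-\overline{\rho_{\sc np}})\epsilon_{\sc np}=0$. The $\epsilon_{\sc np}$ term then drops out, and we obtain (\ref{eq-NP-Ken-Riem-cur-1}). For (\ref{eq-NP-tsm-Riem-cur-4}), the term $\partial(\rho_{\sc np})$ is the derivative of a constant and so vanishes, which gives (\ref{eq-NP-Ken-Riem-cur-4}). Equation (\ref{eq-NP-Ken-Riem-cur-2}) is the same as (\ref{eq-NP-tsm-Riem-cur-2}). For (\ref{eq-NP-tsm-Riem-cur-3}), we have $\xi(\rho_{\sc np})=0$ and $\rho_{\sc np}^{2}=1$, which gives $-1$, i.e.\ (\ref{eq-NP-Ken-Riem-cur-3}).

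As a consistency check, I would compare these values against the Kenmotsu corollaries for the Ricci tensor and the scalar curvature through (\ref{eq-3D-Riem-cur}). With $S(\xi,\xi)=-2$, $S(\partial,\xi)=0$, and $S(\partial,\overline{\partial})$ as in (\ref{eq-Kenm-gen-Sach-5}), the value $R(\partial,\xi,\xi,\overline{\partial})=-1$ should be recovered. I also expect to confirm this directly from (\ref{eq-NP-tsm-R(xi,bardelta)}), which gives $R(\overline{\partial},\xi)\xi=-\overline{\partial}$, and then pairing with $\partial$ using (\ref{eq-NPF-metric}).

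There is no real obstacle here. The only point needing care is the sign and ordering convention of the $(0,4)$-tensor, but since the corollary inherits that convention from the proposition, direct substitution suffices.
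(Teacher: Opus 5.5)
Your proposal is correct and follows the same route as the paper, which obtains this corollary by substituting the Kenmotsu values $\kappa_{\mathrm{np}}=\sigma_{\mathrm{np}}=0$ and $\rho_{\mathrm{np}}=\overline{\rho_{\mathrm{np}}}=1$ into the preceding proposition on the Riemann--Christoffel tensor of a trans-Sasakian manifold. Your optional consistency checks also work out: $R(\partial,\xi,\xi,\overline{\partial})=\frac{1}{2}S(\xi,\xi)=-1$, and pairing $R(\overline{\partial},\xi)\xi=-\overline{\partial}$ with $\partial$ gives the same value once you use the pair symmetry of $R$.
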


\begin{cor}
Let $\left(M,\varphi,\xi,\eta,g\right) $ be a $3$-dimensional cosymplectic
manifold equipped with a Newman--Penrose frame
$(\partial,\overline{\partial},\xi)$. Then the Riemann--Christoffel curvature
tensor $R$ of $M$ is given by
\begin{equation}
R(\partial,\overline{\partial},\overline{\partial},\partial) = 2|\beta_{\sc
np}|^{2}+\partial (\overline{\beta_{\sc np}})+\overline{\partial}(\beta_{\sc
np}), \label{eq-NP-cosym-Riem-cur-1}
\end{equation}
\begin{equation}
R(\partial,\overline{\partial},\xi,\partial) = 0,
\label{eq-NP-cosym-Riem-cur-4}
\end{equation}
\begin{equation}
R(\partial,\xi,\xi,\partial) = 0,  \label{eq-NP-cosym-Riem-cur-2}
\end{equation}
\begin{equation}
R(\partial,\xi,\xi,\overline{\partial}) = 0.  \label{eq-NP-cosym-Riem-cur-3}
\end{equation}
\end{cor}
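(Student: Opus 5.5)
The plan is to obtain the corollary by specializing the preceding proposition on the Riemann--Christoffel tensor of a $3$-dimensional trans-Sasakian manifold of type $(\alpha,\beta)$. By Remark~\ref{rem-tS-cases}(g), a $3$-dimensional cosymplectic manifold is exactly the trans-Sasakian case $\alpha=0=\beta$. By (\ref{eq-tsm-NP}) this means
\[
\kappa_{\sc np}=\sigma_{\sc np}=0,\qquad \rho_{\sc np}=\beta+i\alpha=0
\]
identically on $M$. Since $\rho_{\sc np}$ vanishes identically, all of its derivatives vanish as well: $\partial(\rho_{\sc np})=0$ and $\xi(\rho_{\sc np})=0$. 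The spin coefficients $\beta_{\sc np}$ and $\epsilon_{\sc np}$ are not constrained.

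The steps are then the following substitutions, carried out in order:
\begin{enumerate}
\item[(i)] In (\ref{eq-NP-tsm-Riem-cur-1}), the terms $|\rho_{\sc np}|^{2}$ and $(\rho_{\sc np}-\overline{\rho_{\sc np}})\epsilon_{\sc np}$ drop out. This leaves $2|\beta_{\sc np}|^{2}+\partial(\overline{\beta_{\sc np}})+\overline{\partial}(\beta_{\sc np})$, which is (\ref{eq-NP-cosym-Riem-cur-1}).
\item[(ii)] Since (\ref{eq-NP-tsm-Riem-cur-4}) gives $R(\partial,\overline{\partial},\xi,\partial)=\partial(\rho_{\sc np})$, it vanishes, which is (\ref{eq-NP-cosym-Riem-cur-4}).
\item[(iii)] (\ref{eq-NP-tsm-Riem-cur-2}) holds for every trans-Sasakian manifold, so it holds here unchanged.
\item[(iv)] (\ref{eq-NP-tsm-Riem-cur-3}) gives $R(\partial,\xi,\xi,\overline{\partial})=-\xi(\rho_{\sc np})-\rho_{\sc np}^{2}=0$.
\end{enumerate}

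As a cross-check, these values can be recovered from the cosymplectic Ricci and scalar curvature formulas through (\ref{eq-3D-Riem-cur}). The inputs are (\ref{eq-cosym-gen-Sach-2})--(\ref{eq-cosym-gen-Sach-5}) and (\ref{eq-NP-cosym-scalar-cur}), in particular $S(\xi,\xi)=0$, $S(\partial,\xi)=0$ and $\tau=S(\partial,\overline{\partial})$. Evaluating the Kulkarni--Nomizu product on the frame using (\ref{eq-NPF-metric}), the tensor $g\circledast(S-\frac{\tau}{2}g)$ gives
\[
R(\partial,\xi,\xi,\overline{\partial})=-\bigl(S(\xi,\xi)+S(\partial,\overline{\partial})-\tau\bigr)=0,
\]
consistent with step (iv). The component $R(\partial,\overline{\partial},\overline{\partial},\partial)$ comes out as $-S(\partial,\overline{\partial})$ minus the scalar term, matching step (i).

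There is no real obstacle in this argument. The only point needing care is tracking signs and conjugations in the cross-check through the Kulkarni--Nomizu product; the direct specialization itself is immediate.
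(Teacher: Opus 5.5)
Your proposal is correct and is exactly the argument the paper intends: the corollary is the preceding trans-Sasakian curvature proposition specialized to $\rho_{\sc np}=0$ (Remark~\ref{rem-tS-cases}(g)), and the paper gives no separate proof. One small slip in your optional cross-check: with the paper's Kulkarni--Nomizu convention one gets $R(\partial,\xi,\xi,\overline{\partial})=S(\xi,\xi)+S(\partial,\overline{\partial})-\tau=\frac{1}{2}S(\xi,\xi)$ with a plus sign, which is consistent with $-\xi(\rho_{\sc np})-\rho_{\sc np}^{2}$ in general; this is immaterial here because the quantity vanishes.
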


Let $(M,g)$ be a Riemannian manifold. Then $M$ is said to be an Einstein
manifold if
\begin{equation}
S=a{g},  \label{eq-Einst}
\end{equation}
where $a$ is a real constant and $S$ is the Ricci tensor of $M$.

\begin{prop}
Let $\left(M,\varphi,\xi,\eta,g\right)$ be a $3$-dimensional trans-Sasakian
manifold of type $(\alpha,\beta)$ equipped with a Newman--Penrose frame
$(\partial,\overline{\partial},\xi)$. Then $M$ is an Einstein manifold given by
$(\ref{eq-Einst})$ if and only if
\begin{equation}
a = -|\rho_{\sc np} |^{2}-2|\beta_{\sc np} |^{2}-(\rho_{\sc np} -
\overline{\rho_{\sc np}})\epsilon_{\sc np} -\xi\left(\rho_{\sc np}\right)
-\rho_{\sc np}^{2}-\partial (\overline{\beta_{\sc np}})
-\overline{\partial}(\beta_{\sc np})= -2\xi \left(\rho_{\sc np} \right)
-2\rho_{\sc np}^{2},  \label{eq-NP-tr-Sas-Eins-1}
\end{equation}
\begin{equation}
\partial (\rho_{\sc np}) = 0.  \label{eq-NP-tr-Sas-Eins-2}
\end{equation}
\end{prop}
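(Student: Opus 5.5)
The plan is to write the Einstein condition $S=ag$ in the complex frame $(\partial,\overline{\partial},\xi)$ and compare it, component by component, with the Ricci components already computed in (\ref{eq-tsm-gen-Sach-2})--(\ref{eq-tsm-gen-Sach-5}). A symmetric real $(0,2)$-tensor on $M^3$ is determined by its values on pairs from $\{\partial,\overline{\partial},\xi\}$, together with complex conjugation. Since $S$ is real, we have $S(\overline{\partial},\overline{\partial})=\overline{S(\partial,\partial)}$ and $S(\overline{\partial},\xi)=\overline{S(\partial,\xi)}$. By (\ref{eq-NPF-metric}), the condition $S=ag$ is therefore equivalent to the four scalar equations
\[
S(\partial,\partial)=0,\qquad S(\partial,\xi)=0,\qquad S(\partial,\overline{\partial})=a,\qquad S(\xi,\xi)=a.
\]

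Next I would substitute the earlier formulas into these four equations. The first holds automatically, by (\ref{eq-tsm-gen-Sach-2}). By (\ref{eq-tsm-gen-Sach-3}), the second is exactly $\partial(\rho_{\sc np})=0$, which is (\ref{eq-NP-tr-Sas-Eins-2}). Inserting (\ref{eq-tsm-gen-Sach-5}) and (\ref{eq-tsm-gen-Sach-1}) into the last two equations yields the two expressions for $a$ in (\ref{eq-NP-tr-Sas-Eins-1}), and requiring them to be equal gives the chain of equalities displayed there. Conversely, if (\ref{eq-NP-tr-Sas-Eins-1}) and (\ref{eq-NP-tr-Sas-Eins-2}) hold, then all four component equations hold, so $S=ag$ on the frame and hence everywhere. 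Both directions are thus a single substitution.

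The main point needing care is that an Einstein manifold requires $a$ to be a real constant. I would note that in dimension $3$ (indeed in any dimension $\geq3$), if $S=fg$ for a function $f$, the contracted Bianchi identity forces $f$ to be constant. Hence it suffices to check $S=fg$, with $a$ then automatically constant. Equivalently, one can read this constancy off from (\ref{eq-PN-tsm-diff-bianchi-2}) and (\ref{eq-PN-tsm-diff-bianchi-1}) once $S(\partial,\xi)=0$.

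It also needs a remark why the expression $-2\xi(\rho_{\sc np})-2\rho_{\sc np}^{2}$ is real. Its imaginary part is a multiple of $\rho_{\sc np}^2-\overline{\rho_{\sc np}}^2+\xi(\rho_{\sc np})-\xi(\overline{\rho_{\sc np}})$, which vanishes by (\ref{eq-NP-2alphabeta+xi(alpha)=0}). The reality of $S(\partial,\overline{\partial})$ similarly follows from $S$ being a real tensor. I expect no serious obstacle beyond these consistency remarks.
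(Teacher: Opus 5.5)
Your proposal is correct and follows the paper's proof: you write $S=ag$ in the frame to get $S(\partial,\partial)=S(\partial,\xi)=0$ and $S(\partial,\overline{\partial})=S(\xi,\xi)=a$, then substitute (\ref{eq-tsm-gen-Sach-2})--(\ref{eq-tsm-gen-Sach-5}). Your remarks on the constancy of $a$ via the contracted Bianchi identity and on the reality of $-2\xi(\rho_{\sc np})-2\rho_{\sc np}^{2}$ are correct additions that the paper omits, but drop the suggested alternative of reading $\xi(a)=0$ off (\ref{eq-PN-tsm-diff-bianchi-2}): as printed, that identity is not gauge-consistent because of its $-\beta_{\sc np}S(\partial,\overline{\partial})$ term, and with $S=ag$ it gives $\tfrac32\xi(a)=(\rho_{\sc np}+\overline{\rho_{\sc np}}+\beta_{\sc np})a$ rather than $\xi(a)=0$, so cite the contracted Bianchi identity itself.
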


\noindent {\bf Proof.} From (\ref{eq-Einst}) and (\ref{eq-NPF-metric}), we have
\begin{equation}
S(\partial,\overline{\partial}) = S(\xi,\xi) =a,  \label{eq-NP-Einstein-1}
\end{equation}
\begin{equation}
S(\partial,\partial) = S(\partial,\xi)=0.  \label{eq-NP-Einstein-2}
\end{equation}
Using (\ref{eq-tsm-gen-Sach-2}), (\ref{eq-tsm-gen-Sach-3}), (\ref{eq-tsm-gen-Sach-1}) and (\ref{eq-tsm-gen-Sach-5}) in (\ref{eq-NP-Einstein-1}) and (\ref{eq-NP-Einstein-2}), we obtain (\ref{eq-NP-tr-Sas-Eins-1}) and (\ref{eq-NP-tr-Sas-Eins-2}), respectively. $\blacksquare$

\begin{cor}
Let $\left(M,\varphi,\xi,\eta,g\right)$ be a $3$-dimensional Sasakian manifold
equipped with a Newman--Penrose frame $(\partial,\overline{\partial},\xi)$.
Then $M$ is an Einstein manifold given by $(\ref{eq-Einst})$ if and only if
\begin{equation}
a=2=-2|\beta_{\sc np} |^{2}-2i\epsilon_{\sc np} - \partial(\overline{\beta_{\sc
np}}) -\overline{\partial}(\beta_{\sc np}).  \label{eq-NP-Sas-Eins}
\end{equation}
\end{cor}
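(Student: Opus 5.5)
The plan is to specialize the preceding Einstein criterion for trans-Sasakian manifolds, i.e. conditions (\ref{eq-NP-tr-Sas-Eins-1}) and (\ref{eq-NP-tr-Sas-Eins-2}), to the Sasakian case. By Remark~\ref{rem-tS-cases}(c), a Sasakian manifold is trans-Sasakian with $\alpha=1$, $\beta=0$, so (\ref{eq-tsm-NP}) gives $\rho_{\sc np}=i$, and $\rho_{\sc np}$ is constant.

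First, since $\rho_{\sc np}$ is constant, all its frame derivatives vanish: $\partial(\rho_{\sc np})=0$ and $\xi(\rho_{\sc np})=0$. Thus the second Einstein condition (\ref{eq-NP-tr-Sas-Eins-2}) holds automatically. This is consistent with $S(\partial,\xi)=0$ in (\ref{eq-sm-gen-Sach-3}). The Einstein condition therefore reduces entirely to (\ref{eq-NP-tr-Sas-Eins-1}).

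Next, I will evaluate the right-most expression in (\ref{eq-NP-tr-Sas-Eins-1}). It gives
\[
a=-2\xi(\rho_{\sc np})-2\rho_{\sc np}^2=-2\cdot 0-2(i)^2=2,
\]
which agrees with $S(\xi,\xi)=2$ from (\ref{eq-sm-gen-Sach-1}). For the middle expression I substitute the following values:
\[
|\rho_{\sc np}|^2=1,\qquad \rho_{\sc np}-\overline{\rho_{\sc np}}=2i,\qquad \xi(\rho_{\sc np})=0,\qquad \rho_{\sc np}^2=-1.
\]
The $\pm1$ contributions from $-|\rho_{\sc np}|^2$ and $-\rho_{\sc np}^2$ cancel. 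What remains is
\[
-2|\beta_{\sc np}|^2-2i\epsilon_{\sc np}-\partial(\overline{\beta_{\sc np}})-\overline{\partial}(\beta_{\sc np}),
\]
which is exactly $S(\partial,\overline{\partial})$ in (\ref{eq-sm-gen-Sach-5}).

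Putting these together, $M$ is Einstein if and only if $S(\partial,\overline{\partial})=S(\xi,\xi)$. Since $S(\xi,\xi)=2$, this is precisely (\ref{eq-NP-Sas-Eins}). For the converse, assume (\ref{eq-NP-Sas-Eins}). Together with $S(\partial,\partial)=0=S(\partial,\xi)$ and their complex conjugates, it gives $S=2g$ on the complex frame, so $M$ is Einstein with $a=2$. There is no real obstacle here; the only care needed is the sign bookkeeping in $-(\rho_{\sc np}-\overline{\rho_{\sc np}})\epsilon_{\sc np}=-2i\epsilon_{\sc np}$ and the cancellation of $-|\rho_{\sc np}|^2-\rho_{\sc np}^2$.
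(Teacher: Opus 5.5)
Your proposal is correct and takes the same route as the paper. The paper gives no separate proof of this corollary; it is obtained by substituting $\rho_{\sc np}=i$ into the trans-Sasakian Einstein criterion (\ref{eq-NP-tr-Sas-Eins-1})--(\ref{eq-NP-tr-Sas-Eins-2}), where $\partial(\rho_{\sc np})=0$ holds automatically, $-|\rho_{\sc np}|^2-\rho_{\sc np}^2$ cancels, and $-(\rho_{\sc np}-\overline{\rho_{\sc np}})\epsilon_{\sc np}=-2i\epsilon_{\sc np}$, all exactly as you computed.
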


\begin{cor}
Let $\left(M,\varphi,\xi,\eta,g\right)$ be a $3$-dimensional Kenmotsu manifold
equipped with a Newman--Penrose frame $(\partial,\overline{\partial},\xi)$.
Then $M$ is an Einstein manifold given by $(\ref{eq-Einst})$ if and only if
\begin{equation}
a=-2=-2-2|\beta_{\sc np} |^{2}-\partial (\overline{\beta_{\sc np}}) -
\overline{\partial}(\beta_{\sc np}).  \label{eq-NP-Ken-Eins}
\end{equation}
\end{cor}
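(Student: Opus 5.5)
The corollary is the Kenmotsu case of the preceding Einstein criterion. Specialize (\ref{eq-NP-tr-Sas-Eins-1}) and (\ref{eq-NP-tr-Sas-Eins-2}) using Remark~\ref{rem-tS-cases}(f), which says that a $3$-dimensional Kenmotsu manifold is trans-Sasakian with $\rho_{\sc np}=1$, that is, $\alpha=0$ and $\beta=1$. Since $\rho_{\sc np}$ is a constant function, all its frame derivatives vanish:
\[
\partial(\rho_{\sc np})=0,\qquad \xi(\rho_{\sc np})=0 .
\]
Also $\rho_{\sc np}-\overline{\rho_{\sc np}}=0$, so the $\epsilon_{\sc np}$ term drops out. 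Finally $|\rho_{\sc np}|^{2}=\rho_{\sc np}^{2}=1$.

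Substituting these into the criterion, condition (\ref{eq-NP-tr-Sas-Eins-2}) holds automatically, which matches $S(\partial,\xi)=0$ in (\ref{eq-Kenm-gen-Sach-3}). The rightmost expression in (\ref{eq-NP-tr-Sas-Eins-1}) becomes $-2\cdot0-2\cdot1=-2$, in agreement with (\ref{eq-Kenm-gen-Sach-1}), so $a=-2$. The middle expression becomes
\[
-1-2|\beta_{\sc np}|^{2}-0-0-1-\partial(\overline{\beta_{\sc np}})-\overline{\partial}(\beta_{\sc np})
=-2-2|\beta_{\sc np}|^{2}-\partial(\overline{\beta_{\sc np}})-\overline{\partial}(\beta_{\sc np}),
\]
which is exactly $S(\partial,\overline{\partial})$ in (\ref{eq-Kenm-gen-Sach-5}). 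Equating the three expressions gives (\ref{eq-NP-Ken-Eins}).

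The converse follows the same way as in the general proposition. If (\ref{eq-NP-Ken-Eins}) holds, then $S(\partial,\overline{\partial})=S(\xi,\xi)=-2$, while $S(\partial,\partial)=S(\partial,\xi)=0$ by the Kenmotsu Ricci corollary. Hence $S=-2g$ on the complex frame, so $M$ is Einstein.

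The argument is routine substitution. The only point needing care is the reality of the terms: $|\rho_{\sc np}|^{2}$ and $\rho_{\sc np}^{2}$ both equal $1$, and the $\epsilon_{\sc np}$ contribution vanishes because $\rho_{\sc np}$ is real, not because of any assumption on $\epsilon_{\sc np}$.
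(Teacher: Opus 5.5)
Your proposal is correct and is essentially the paper's argument. The paper gives no separate proof: the corollary is the specialization $\rho_{\sc np}=1$ of the trans-Sasakian Einstein criterion (\ref{eq-NP-tr-Sas-Eins-1})--(\ref{eq-NP-tr-Sas-Eins-2}), which is exactly your substitution, including $\partial(\rho_{\sc np})=\xi(\rho_{\sc np})=0$ and the vanishing of the $\epsilon_{\sc np}$ term; your explicit converse via the Kenmotsu Ricci components is a harmless addition that the paper leaves implicit.
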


\begin{cor}
Let $\left(M,\varphi,\xi,\eta,g\right)$ be a $3$-dimensional cosymplectic
manifold equipped with a Newman--Penrose frame
$(\partial,\overline{\partial},\xi)$. Then $M$ is an Einstein manifold given by
$(\ref{eq-Einst})$ if and only if
\begin{equation}
a=0=-2|\beta_{\sc np} |^{2}-\partial(\overline{\beta_{\sc np}}) -
\overline{\partial}(\beta_{\sc np}). \label{eq-NP-cosym-Eins}
\end{equation}
\end{cor}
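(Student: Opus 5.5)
The corollary is the cosymplectic specialisation of the preceding Einstein proposition, so I would obtain it by substituting the cosymplectic value of the spin coefficient into (\ref{eq-NP-tr-Sas-Eins-1}) and (\ref{eq-NP-tr-Sas-Eins-2}). By Remark~\ref{rem-tS-cases}(g), a $3$-dimensional cosymplectic manifold is a trans-Sasakian manifold with $\alpha=\beta=0$, that is, $\rho_{\sc np}=0$ identically. Because $\rho_{\sc np}$ vanishes everywhere, all of its frame derivatives vanish as well:
\[
\xi(\rho_{\sc np})=0,\qquad \partial(\rho_{\sc np})=0 .
\]

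The argument then runs in three steps. First, condition (\ref{eq-NP-tr-Sas-Eins-2}) holds automatically, so it drops out of the characterisation. Second, the right-hand expression in (\ref{eq-NP-tr-Sas-Eins-1}) is $-2\xi(\rho_{\sc np})-2\rho_{\sc np}^2=0$. This forces $a=0$, which is consistent with $S(\xi,\xi)=0$ in (\ref{eq-cosym-gen-Sach-1}). Third, in the middle expression the terms $-|\rho_{\sc np}|^2$, $-(\rho_{\sc np}-\overline{\rho_{\sc np}})\epsilon_{\sc np}$, $-\xi(\rho_{\sc np})$ and $-\rho_{\sc np}^2$ all vanish. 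What remains is $-2|\beta_{\sc np}|^2-\partial(\overline{\beta_{\sc np}})-\overline{\partial}(\beta_{\sc np})$, which agrees with $S(\partial,\overline{\partial})$ in (\ref{eq-cosym-gen-Sach-5}).

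Since the proposition is an ``if and only if'', its conditions after substitution reduce exactly to $a=0=-2|\beta_{\sc np}|^2-\partial(\overline{\beta_{\sc np}})-\overline{\partial}(\beta_{\sc np})$, which is the claimed statement. There is no genuine obstacle here. The only point needing care is that $\xi(\rho_{\sc np})$ and $\partial(\rho_{\sc np})$ vanish because $\rho_{\sc np}$ is identically zero, not merely zero at a point.
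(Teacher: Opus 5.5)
Your proposal is correct and is the paper's route: the corollary is the cosymplectic case $\rho_{\sc np}=0$ (Remark~\ref{rem-tS-cases}(g)) of the Einstein characterization (\ref{eq-NP-tr-Sas-Eins-1})--(\ref{eq-NP-tr-Sas-Eins-2}). Setting $\rho_{\sc np}=0$ makes (\ref{eq-NP-tr-Sas-Eins-2}) automatic, forces $a=0$, and reduces the middle expression to $-2|\beta_{\sc np}|^{2}-\partial(\overline{\beta_{\sc np}})-\overline{\partial}(\beta_{\sc np})$, exactly as you describe.
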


\section{Rough Laplacian and geometric properties of vector
fields\label{Sect-NP-RL}}

Let $(M,g)$ be a $3$-dimensional Riemannian manifold, and let
$(e_{1},e_{2},\xi)$ be a real orthonormal frame on $M$. For any vector field
$X$ on $M$, the rough Laplacian $\Delta{X}$ of $X$ is defined by (see
\cite[Dragomir and Perrone 2012, p.~52]{Dragomir-Perrone-12-Book})
\begin{equation}
\Delta_{g} X = - \left\{\sum\limits_{i=1}^{2}(\nabla_{e_{i}}\nabla_{e_{i}}X
-\nabla_{\nabla_{e_{i}}e_{i}}X)+\nabla_{\xi}\nabla_{\xi}X
-\nabla_{\nabla_{\xi}\xi}X \right\}.  \label{eq-Lap-X}
\end{equation}

\begin{lem}
Let $(M,g)$ be a $3$-dimensional Riemannian manifold equipped with a
Newman--Penrose frame $(\partial,\overline{\partial},\xi)$. For any vector
field $X$ on $M$, the rough Laplacian of $X$ is given by
\begin{equation}
\Delta_{g} X = -\nabla_{\partial}\nabla_{\overline{\partial}}X
-\nabla_{\overline{\partial}}\nabla_{\partial}X
+\nabla_{\nabla_{\overline{\partial}}\partial}X
+\nabla_{\nabla_{\partial}\overline{\partial}}X
-\nabla_{\xi}\nabla_{\xi}X+\nabla_{\nabla_{\xi}\xi}X.  \label{eq-Lap-X-NP}
\end{equation}
\end{lem}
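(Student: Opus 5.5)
Starting from the definition (\ref{eq-Lap-X}), only the horizontal part
\[
\sum_{i=1}^{2}\bigl(\nabla_{e_i}\nabla_{e_i}X-\nabla_{\nabla_{e_i}e_i}X\bigr)
\]
needs rewriting in terms of $\partial,\overline{\partial}$; the $\xi$-terms $-\nabla_\xi\nabla_\xi X+\nabla_{\nabla_\xi\xi}X$ pass over unchanged. The key observation is that both $\nabla^2_{U,V}X:=\nabla_U\nabla_V X-\nabla_{\nabla_U V}X$ and its complexification are $C^\infty(M,{\Bbb C})$-bilinear in $(U,V)$. Hence the horizontal part equals $\nabla^2_{e_1,e_1}X+\nabla^2_{e_2,e_2}X$, and we may substitute the expressions (\ref{eq-scalar-NP-2}) for $e_1$ and $e_2$.

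Substituting gives
\[
\nabla^2_{e_1,e_1}=\tfrac12\bigl(\nabla^2_{\partial,\partial}+\nabla^2_{\partial,\overline{\partial}}+\nabla^2_{\overline{\partial},\partial}+\nabla^2_{\overline{\partial},\overline{\partial}}\bigr),
\qquad
\nabla^2_{e_2,e_2}=-\tfrac12\bigl(\nabla^2_{\partial,\partial}-\nabla^2_{\partial,\overline{\partial}}-\nabla^2_{\overline{\partial},\partial}+\nabla^2_{\overline{\partial},\overline{\partial}}\bigr).
\]
Adding these, the terms $\nabla^2_{\partial,\partial}$ and $\nabla^2_{\overline{\partial},\overline{\partial}}$ cancel, and the sum is $\nabla^2_{\partial,\overline{\partial}}X+\nabla^2_{\overline{\partial},\partial}X$. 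This is the same computation as in the proof of (\ref{eq-scalar-NP}), reflecting that $g^{-1}$ in the frame $(\partial,\overline{\partial},\xi)$ pairs $\partial$ with $\overline{\partial}$ by (\ref{eq-NPF-metric}).

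Expanding, $\nabla^2_{\partial,\overline{\partial}}X+\nabla^2_{\overline{\partial},\partial}X=\nabla_\partial\nabla_{\overline{\partial}}X+\nabla_{\overline{\partial}}\nabla_\partial X-\nabla_{\nabla_\partial\overline{\partial}}X-\nabla_{\nabla_{\overline{\partial}}\partial}X$. Inserting this into (\ref{eq-Lap-X}) with the overall minus sign yields (\ref{eq-Lap-X-NP}).

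The only point needing care is justifying the bilinearity with complex coefficients. For this, the connection is extended complex-linearly to $TM\otimes{\Bbb C}$, and the tensoriality of $\nabla^2$ in both slots follows from the Leibniz rule: the first-order terms cancel, as in the real case.
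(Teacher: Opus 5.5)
Your proposal is correct and follows the paper's proof, which likewise substitutes $e_1=\frac{1}{\sqrt2}(\partial+\overline{\partial})$ and $e_2=\frac{i}{\sqrt2}(\partial-\overline{\partial})$ into the definition of $\Delta_g X$, so that the $\nabla^2_{\partial,\partial}$ and $\nabla^2_{\overline{\partial},\overline{\partial}}$ contributions cancel. Your appeal to the $C^\infty(M,{\Bbb C})$-bilinearity of $\nabla^2$ makes explicit a justification the paper leaves implicit; it is not strictly needed here, because the coefficients $\frac{1}{\sqrt2}$ and $\frac{i}{\sqrt2}$ are constants, so term-by-term substitution already produces no extra derivative terms.
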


\noindent {\bf Proof.} From (\ref{eq-partial-partial-bar}), we have
\begin{equation}
e_{1}=\frac{1}{\sqrt{2}}(\partial +\overline{\partial}), \qquad
e_{2}=\frac{i}{\sqrt{2}}(\partial -\overline{\partial}).  \label{eq-Lap-X-NP-1}
\end{equation}
Using (\ref{eq-Lap-X-NP-1}) in (\ref{eq-Lap-X}), we obtain (\ref{eq-Lap-X-NP}).
$\blacksquare$

\begin{lem}
Let $(M,g)$ be a $3$-dimensional Riemannian manifold equipped with a
Newman--Penrose frame $(\partial,\overline{\partial},\xi)$. Then the rough
Laplacian of the vector field $\xi$ is given by
\begin{eqnarray}
\Delta_{g} \xi &=& \left(-\overline{\partial} (\overline{\rho_{\sc np}})
+\partial(\overline{\sigma_{\sc np}})+2\overline{\sigma_{\sc np}}\beta_{\sc np}
+\rho_{\sc np} \overline{\kappa_{\sc np}}+\xi (\overline{\kappa_{\sc np}})
+\overline{\kappa_{\sc np}}\epsilon_{\sc np} +\kappa_{\sc np}
\overline{\sigma_{\sc np}}\right) \partial\nonumber \\
&&+\left(-\partial(\rho_{\sc np})+\overline{\partial}(\sigma_{\sc np})
+2\sigma_{\sc np} \overline{\beta_{\sc np}}+\overline{\rho_{\sc np}}\kappa_{\sc np}
+\xi (\kappa_{\sc np})+\kappa_{\sc np} \overline{\epsilon_{\sc np}}
+\overline{\kappa_{\sc np}}\sigma_{\sc np} \right) \overline{\partial}  \nonumber \\
&&+2\left(\left\vert \kappa_{\sc np} \right\vert^{2} + \left\vert\rho_{\sc
np}\right\vert^{2}+\left\vert \sigma_{\sc np} \right\vert^{2}\right) \xi .
\label{eq-Laplacian-NP-xi}
\end{eqnarray}
\end{lem}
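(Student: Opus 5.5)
Substitute $X=\xi$ into formula (\ref{eq-Lap-X-NP}) of the previous lemma. This reduces the computation to iterated covariant derivatives of $\xi$ along the frame, all of which the structure equations (\ref{eq-PNF-nabla(partial,partial)})--(\ref{eq-PNF-nabla(xi,xi)}) already provide. We first record the conjugate relations obtained by complex conjugation:
\[
\nabla_{\overline{\partial}}\xi=\rho_{\sc np}\overline{\partial}-\overline{\sigma_{\sc np}}\partial,\qquad
\nabla_{\overline{\partial}}\partial=-\overline{\beta_{\sc np}}\partial-\rho_{\sc np}\xi,\qquad
\nabla_{\overline{\partial}}\overline{\partial}=\overline{\beta_{\sc np}}\,\overline{\partial}+\overline{\sigma_{\sc np}}\xi,
\]
\[
\nabla_{\xi}\overline{\partial}=\overline{\epsilon_{\sc np}}\,\overline{\partial}+\overline{\kappa_{\sc np}}\xi .
\]
The second-order terms are then computed one at a time by the Leibniz rule.

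For $\nabla_{\partial}\nabla_{\overline{\partial}}\xi$, differentiate $\rho_{\sc np}\overline{\partial}-\overline{\sigma_{\sc np}}\partial$ along $\partial$. This gives $\partial(\rho_{\sc np})\overline{\partial}-\partial(\overline{\sigma_{\sc np}})\partial$, plus $\rho_{\sc np}\nabla_\partial\overline{\partial}-\overline{\sigma_{\sc np}}\nabla_\partial\partial$, into which (\ref{eq-PNF-nabla(partial,barpartial)}) and (\ref{eq-PNF-nabla(partial,partial)}) are substituted. The term $\nabla_{\overline{\partial}}\nabla_{\partial}\xi$ is handled symmetrically, or directly by conjugation. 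For the correction terms, note that $\nabla_{\overline{\partial}}\partial+\nabla_{\partial}\overline{\partial}=-\overline{\beta_{\sc np}}\partial-\beta_{\sc np}\overline{\partial}-(\rho_{\sc np}+\overline{\rho_{\sc np}})\xi$. Hence $\nabla_{\nabla_{\overline{\partial}}\partial}\xi+\nabla_{\nabla_\partial\overline{\partial}}\xi$ follows by linearity from $\nabla_\partial\xi$, $\nabla_{\overline\partial}\xi$ and $\nabla_\xi\xi$. Likewise, $\nabla_\xi\nabla_\xi\xi$ comes from differentiating $-\overline{\kappa_{\sc np}}\partial-\kappa_{\sc np}\overline{\partial}$ along $\xi$ using (\ref{eq-PNF-nabla(xi,partial)}). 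Finally, $\nabla_{\nabla_\xi\xi}\xi=-\overline{\kappa_{\sc np}}\nabla_\partial\xi-\kappa_{\sc np}\nabla_{\overline\partial}\xi$ is again linear.

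Next collect the coefficients of $\partial$, $\overline{\partial}$ and $\xi$. The $\xi$-component is a useful sanity check. Since $g(\xi,\xi)=1$, we have $g(\Delta_g\xi,\xi)=|\nabla\xi|^2$, and summing the squared moduli of the frame components of $\nabla\xi$ gives $2(|\kappa_{\sc np}|^2+|\rho_{\sc np}|^2+|\sigma_{\sc np}|^2)$. The $\overline{\partial}$-coefficient should be the complex conjugate of the $\partial$-coefficient, because $\Delta_g\xi$ is real. So it suffices to compute one of them carefully and then conjugate. In doing so, use that $\epsilon_{\sc np}$ is imaginary, so $\overline{\epsilon_{\sc np}}=-\epsilon_{\sc np}$.

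The main obstacle is sign and conjugation bookkeeping in the horizontal coefficients. The terms in $\beta_{\sc np}$ and $\overline{\beta_{\sc np}}$ multiplying $\rho_{\sc np}$ should cancel between the second-derivative terms and the $\nabla_{\nabla\partial}$ correction terms. The $\sigma\beta$ terms, by contrast, should add up to $2\overline{\sigma_{\sc np}}\beta_{\sc np}$, and cross terms such as $\kappa_{\sc np}\overline{\sigma_{\sc np}}$ arise from $\nabla_{\nabla_\xi\xi}\xi$. I would track each contribution in a small table and verify the cancellations against the conjugation symmetry before assembling (\ref{eq-Laplacian-NP-xi}).
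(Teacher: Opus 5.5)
Your proposal is correct and is essentially the paper's own proof: set $X=\xi$ in (\ref{eq-Lap-X-NP}) and expand every term using the structure equations (\ref{eq-PNF-nabla(partial,partial)})--(\ref{eq-PNF-nabla(xi,xi)}) and their complex conjugates. Your bookkeeping predictions also check out: the $\overline{\rho_{\sc np}}\,\overline{\beta_{\sc np}}$ terms cancel against the correction term, the $\overline{\sigma_{\sc np}}\beta_{\sc np}$ terms double, and $\nabla_{\nabla_\xi\xi}\xi$ supplies $\kappa_{\sc np}\overline{\sigma_{\sc np}}$ while reducing $(\rho_{\sc np}+\overline{\rho_{\sc np}})\overline{\kappa_{\sc np}}$ to $\rho_{\sc np}\overline{\kappa_{\sc np}}$.
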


\noindent {\bf Proof.} Putting $X=\xi$ in (\ref{eq-Lap-X-NP}), we get
\begin{equation}
\Delta_{g} \xi = -\nabla_{\partial}\nabla_{\overline{\partial}}\xi
-\nabla_{\overline{\partial}}\nabla_{\partial}\xi
+\nabla_{\nabla_{\overline{\partial}}\partial}\xi
+\nabla_{\nabla_{\partial}\overline{\partial}}\xi
-\nabla_{\xi}\nabla_{\xi}\xi+\nabla_{\nabla_{\xi}\xi}\xi.
\label{eq-Laplacian-NP-xi-1}
\end{equation}
Using (\ref{eq-PNF-nabla(partial,partial)}),
(\ref{eq-PNF-nabla(partial,barpartial)}), (\ref{eq-PNF-nabla(partial,xi)}),
(\ref{eq-PNF-nabla(xi,partial)}) and (\ref{eq-PNF-nabla(xi,xi)}) in
(\ref{eq-Laplacian-NP-xi-1}), we get (\ref{eq-Laplacian-NP-xi}). $\blacksquare$

\begin{thm}
Let $(M,\varphi,\xi,\eta,g)$ be a $3$-dimensional trans-Sasakian manifold of
type $(\alpha,\beta)$ equipped with a Newman--Penrose frame
$(\partial,\overline{\partial},\xi)$. Then the rough Laplacian of the vector
field $\xi$ is given by
\begin{equation}
\Delta_{g} \xi = - \overline{\partial}(\overline{\rho_{\sc np}})\partial
-\partial(\rho_{\sc np})\overline{\partial}+2\left\vert \rho_{\sc np}
\right\vert^{2}\xi . \label{eq-NP-RL-tsm}
\end{equation}
\end{thm}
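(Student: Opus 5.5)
The plan is to specialize the general formula (\ref{eq-Laplacian-NP-xi}) from the preceding lemma to the trans-Sasakian case. By (\ref{eq-tsm-NP}), equivalently Proposition~\ref{prop:main-equivalence}, we have $\kappa_{\sc np}=0$ and $\sigma_{\sc np}=0$ identically on $M$. Hence all their derivatives vanish as well: $\partial(\overline{\sigma_{\sc np}})=\overline{\partial}(\sigma_{\sc np})=0$ and $\xi(\kappa_{\sc np})=\xi(\overline{\kappa_{\sc np}})=0$.

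I would then take the three components of (\ref{eq-Laplacian-NP-xi}) in turn.

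In the $\partial$-coefficient, each term other than $-\overline{\partial}(\overline{\rho_{\sc np}})$ contains a factor of $\sigma_{\sc np}$, $\kappa_{\sc np}$, their conjugates, or a derivative of one of these. So that coefficient reduces to $-\overline{\partial}(\overline{\rho_{\sc np}})$.

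The $\overline{\partial}$-coefficient reduces to $-\partial(\rho_{\sc np})$ in the same way.

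The $\xi$-coefficient $2(|\kappa_{\sc np}|^2+|\rho_{\sc np}|^2+|\sigma_{\sc np}|^2)$ becomes $2|\rho_{\sc np}|^2$.

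Assembling the three gives (\ref{eq-NP-RL-tsm}).

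The only step requiring care is whether the general lemma may be applied with the frame adapted to $\varphi$ via (\ref{eq-varphi(delta)-varphi(bardelta)}). It may: that lemma holds for any Newman--Penrose frame. Moreover, $\kappa_{\sc np}=\sigma_{\sc np}=0$ is gauge invariant by (\ref{eq-NPF-gauge-spin-coe-1}), and $\rho_{\sc np}$ is gauge invariant.

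As an independent check, I would compute directly from (\ref{eq-Lap-X-NP}) using (\ref{eq-NP-tsm-nabla(xi)}). Write $\nabla_{\overline{\partial}}\xi=\rho_{\sc np}\overline{\partial}$ and $\nabla_{\partial}\xi=\overline{\rho_{\sc np}}\partial$, and use (\ref{eq-PNF-nabla(partial,barpartial)}) together with its conjugate $\nabla_{\overline{\partial}}\partial=-\overline{\beta_{\sc np}}\partial-\rho_{\sc np}\xi$.

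Expanding $\nabla_\partial(\rho_{\sc np}\overline{\partial})$ gives $\partial(\rho_{\sc np})\overline{\partial}-\rho_{\sc np}\beta_{\sc np}\overline{\partial}-|\rho_{\sc np}|^2\xi$. Its conjugate-type counterpart $\nabla_{\overline{\partial}}(\overline{\rho_{\sc np}}\partial)$ expands similarly.

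The correction terms $\nabla_{\nabla_{\overline{\partial}}\partial}\xi+\nabla_{\nabla_\partial\overline{\partial}}\xi$ cancel the $\beta_{\sc np}$-terms. They leave no $\rho\rho$-terms in the $\partial,\overline{\partial}$ directions, since $\nabla_\xi\xi=0$.

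The $\xi$-terms then combine to $2|\rho_{\sc np}|^2\xi$. This is the main bookkeeping point: sign conventions for the $\beta_{\sc np}$-cancellations. Since it agrees with the substitution argument, no genuine obstacle is expected.
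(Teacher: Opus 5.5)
Your proposal is correct and is essentially the paper's own argument: the paper's proof consists only of substituting $\kappa_{\mathrm{np}}=\sigma_{\mathrm{np}}=0$ from (\ref{eq-tsm-NP}) into the general formula (\ref{eq-Laplacian-NP-xi}). Your optional direct check from (\ref{eq-Lap-X-NP}) using (\ref{eq-NP-tsm-nabla(xi)}) also works as you describe: the $\beta_{\mathrm{np}}$-terms cancel against the correction terms, and the $-|\rho_{\mathrm{np}}|^{2}\xi$ terms in $\nabla_{\partial}\nabla_{\overline{\partial}}\xi$ and $\nabla_{\overline{\partial}}\nabla_{\partial}\xi$ together yield $2|\rho_{\mathrm{np}}|^{2}\xi$.
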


\noindent {\bf Proof.} Using (\ref{eq-tsm-NP}) in (\ref{eq-Laplacian-NP-xi}),
we get (\ref{eq-NP-RL-tsm}). $\blacksquare$

Let $(M,g)$ be a $3$-dimensional Riemannian manifold equipped with a
Newman--Penrose frame $(\partial,\overline{\partial},\xi)$. For the vector
field $\xi$, the standard Bochner formula (see \cite[Dragomir and Perrone 2012,
p.~53]{Dragomir-Perrone-12-Book}) is given by
\begin{equation}
g\left(\Delta_{g}\xi,\xi \right)=\frac{1}{2}\Delta \left\Vert\xi
\right\Vert^{2}+\left\Vert \nabla \xi \right\Vert^{2},  \label{eq-Bochner-for}
\end{equation}
where $\Delta$ is the ordinary Laplace--Beltrami operator on functions. Since
$\left\Vert\xi \right\Vert =1$, therefore (\ref{eq-Bochner-for}) becomes
\begin{equation}
\left\Vert \nabla \xi \right\Vert^{2}=g\left(\Delta_{g} \xi,\xi \right).
\label{eq-Bochner-for-1}
\end{equation}

\begin{lem} \label{lem-covariant-xi}
Let $(M,g)$ be a $3$-dimensional Riemannian manifold equipped with a
Newman--Penrose frame $(\partial,\overline{\partial},\xi)$. Then,
\begin{equation}
\left\Vert \nabla \xi \right\Vert^{2}=2\left(\left\vert \kappa_{\sc np}
\right\vert^{2}+\left\vert \rho_{\sc np} \right\vert^{2} + \left\vert
\sigma_{\sc np} \right\vert^{2}\right).  \label{eq-covariant-der-xi}
\end{equation}
\end{lem}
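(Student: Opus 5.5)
Two routes suggest themselves. The quickest combines the Bochner identity (\ref{eq-Bochner-for-1}) with the $\xi$-component of the rough Laplacian in (\ref{eq-Laplacian-NP-xi}). By (\ref{eq-NPF-metric}), $g(\partial,\xi)=g(\overline{\partial},\xi)=0$ and $g(\xi,\xi)=1$, so pairing (\ref{eq-Laplacian-NP-xi}) with $\xi$ kills the $\partial$ and $\overline{\partial}$ coefficients. This leaves
\[
g(\Delta_g\xi,\xi)=2\bigl(|\kappa_{\sc np}|^{2}+|\rho_{\sc np}|^{2}+|\sigma_{\sc np}|^{2}\bigr),
\]
and (\ref{eq-Bochner-for-1}) then yields (\ref{eq-covariant-der-xi}) at once. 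The only point to check is that the sign convention of $\Delta_g$ in (\ref{eq-Lap-X}) matches the form of the Bochner formula quoted from Dragomir--Perrone. With the minus sign in (\ref{eq-Lap-X}), one has $g(\Delta_g\xi,\xi)=\|\nabla\xi\|^{2}$ for unit $\xi$, so it does.

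Because that argument rests on the longer computation behind (\ref{eq-Laplacian-NP-xi}), I would also give a direct verification. Write $\|\nabla\xi\|^{2}=\sum_{a}g(\nabla_{e_a}\xi,\nabla_{e_a}\xi)$ over the real orthonormal frame $(e_1,e_2,\xi)$. Using (\ref{eq-scalar-NP-2}), the horizontal part $\sum_{i=1}^{2}g(\nabla_{e_i}\xi,\nabla_{e_i}\xi)$ becomes $2\,g(\nabla_{\partial}\xi,\nabla_{\overline{\partial}}\xi)$, where $g$ is extended complex-bilinearly. The cross terms $g(\nabla_\partial\xi,\nabla_\partial\xi)$ cancel between the $e_1$ and $e_2$ contributions, because $e_2$ carries the factor $i/\sqrt2$ and $i^2=-1$.

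Next substitute the frame relations. From (\ref{eq-PNF-nabla(partial,xi)}) and its conjugate, $\nabla_{\overline{\partial}}\xi=\rho_{\sc np}\overline{\partial}-\overline{\sigma_{\sc np}}\partial$. By (\ref{eq-NPF-metric}) the only nonzero pairings are $g(\partial,\overline{\partial})=1$, so
\[
g(\nabla_\partial\xi,\nabla_{\overline{\partial}}\xi)=\overline{\rho_{\sc np}}\rho_{\sc np}+\sigma_{\sc np}\overline{\sigma_{\sc np}}=|\rho_{\sc np}|^2+|\sigma_{\sc np}|^2 .
\]
For the vertical part, (\ref{eq-PNF-nabla(xi,xi)}) gives $g(\nabla_\xi\xi,\nabla_\xi\xi)=2\,\overline{\kappa_{\sc np}}\kappa_{\sc np}\,g(\partial,\overline{\partial})=2|\kappa_{\sc np}|^2$. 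Adding the two parts gives (\ref{eq-covariant-der-xi}).

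The main obstacle is bookkeeping rather than any conceptual step. One must track the conjugation conventions in (\ref{eq-PNF-nabla(partial,xi)}), that is, which of $\rho_{\sc np}$ and $\overline{\rho_{\sc np}}$ multiplies $\partial$, and verify the cancellation of the $\partial\partial$ and $\overline{\partial}\,\overline{\partial}$ terms in the passage from the real frame to the complex frame. Both signs only affect intermediate terms that drop out, since every surviving term is a modulus squared.
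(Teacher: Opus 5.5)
Your first route is exactly the paper's proof. The paper pairs the rough Laplacian formula (\ref{eq-Laplacian-NP-xi}) with $\xi$ and applies the Bochner identity (\ref{eq-Bochner-for-1}); your sign check is also fine, since the term $\frac12\Delta\|\xi\|^2$ vanishes for unit $\xi$ under either sign convention. Your supplementary direct computation, $\|\nabla\xi\|^2=2g(\nabla_\partial\xi,\nabla_{\overline{\partial}}\xi)+g(\nabla_\xi\xi,\nabla_\xi\xi)$, is correct as well, and it is the more self-contained argument because it needs only (\ref{eq-PNF-nabla(partial,xi)}) and (\ref{eq-PNF-nabla(xi,xi)}) rather than the full Laplacian computation.
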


\noindent {\bf Proof.} Using (\ref{eq-Laplacian-NP-xi}) in
(\ref{eq-Bochner-for-1}), we get (\ref{eq-covariant-der-xi}). $\blacksquare$

\begin{thm}
Let $(M,\varphi,\xi,\eta,g)$ be a $3$-dimensional trans-Sasakian manifold of
type $(\alpha,\beta)$ equipped with a Newman--Penrose frame
$(\partial,\overline{\partial},\xi)$. Then,
\begin{equation}
\left\Vert \nabla \xi \right\Vert^{2} = 2\left\vert \rho_{\sc np}
\right\vert^{2}. \label{eq-covariant-der-xi-tsm}
\end{equation}
\end{thm}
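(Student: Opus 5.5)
The plan is to specialize Lemma~\ref{lem-covariant-xi} to the trans-Sasakian setting. By (\ref{eq-covariant-der-xi}), valid on any $3$-dimensional Riemannian manifold with a Newman--Penrose frame, we have
\[
\left\Vert \nabla \xi \right\Vert^{2}=2\left(\left\vert \kappa_{\sc np}\right\vert^{2}+\left\vert \rho_{\sc np} \right\vert^{2} + \left\vert\sigma_{\sc np} \right\vert^{2}\right).
\]
For a trans-Sasakian structure of type $(\alpha,\beta)$, the characterization (\ref{eq-tsm-NP}), equivalently (\ref{eq:NP-equivalence}) from Proposition~\ref{prop:main-equivalence}, gives $\kappa_{\sc np}=\sigma_{\sc np}=0$. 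Substituting these removes the first and third terms and yields (\ref{eq-covariant-der-xi-tsm}) at once.

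As an independent check, one can use the Bochner identity (\ref{eq-Bochner-for-1}) together with the rough Laplacian formula (\ref{eq-NP-RL-tsm}). Pairing (\ref{eq-NP-RL-tsm}) with $\xi$ via (\ref{eq-NPF-metric}) kills the $\partial$ and $\overline{\partial}$ components, since $g(\partial,\xi)=g(\overline{\partial},\xi)=0$. This leaves $g(\Delta_g\xi,\xi)=2|\rho_{\sc np}|^2$, the same result.

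A third, direct check uses (\ref{eq-NP-tsm-nabla(xi)}): $\nabla_\partial\xi=\overline{\rho_{\sc np}}\partial$, $\nabla_{\overline\partial}\xi=\rho_{\sc np}\overline\partial$ and $\nabla_\xi\xi=0$. Computing $\|\nabla\xi\|^2=\sum_i|\nabla_{e_i}\xi|^2$ in the real frame, or equivalently $2g(\nabla_\partial\xi,\nabla_{\overline\partial}\xi)+|\nabla_\xi\xi|^2$, gives $2|\rho_{\sc np}|^2=2(\alpha^2+\beta^2)$. This agrees with (\ref{eq:nablaxiTS}), since $\nabla_{e_1}\xi=\beta e_1-\alpha e_2$ and $\nabla_{e_2}\xi=\alpha e_1+\beta e_2$ contribute $\alpha^2+\beta^2$ each.

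There is no real obstacle here. The only point needing care is the normalization of $\|\nabla\xi\|^2$ relative to the complex frame, namely the factor $2$ coming from $g(\partial,\overline\partial)=1$. The direct real-frame check above confirms that factor.
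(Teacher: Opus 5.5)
Your proof is correct and follows the paper's own argument: the paper also obtains the identity by substituting $\kappa_{\sc np}=\sigma_{\sc np}=0$ from (\ref{eq-tsm-NP}) into Lemma~\ref{lem-covariant-xi}. Your two extra checks are also correct. The first uses the Bochner identity with (\ref{eq-NP-RL-tsm}). The second is the direct computation $2g(\nabla_{\partial}\xi,\nabla_{\overline{\partial}}\xi)=2|\rho_{\sc np}|^{2}$, which agrees with the real-frame value $2(\alpha^{2}+\beta^{2})$.
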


\noindent {\bf Proof.} Using (\ref{eq-tsm-NP}) in (\ref{eq-covariant-der-xi}),
we get (\ref{eq-covariant-der-xi-tsm}). $\blacksquare$

Let $(M,g)$ be a Riemannian manifold. A non-zero vector field $X$ on $M$ is
said to be pointwise collinear with its rough Laplacian if there exists a
smooth function $\lambda$ on $M$ such that
\begin{equation}
\Delta_{g} X=\lambda X.  \label{eq-eigenvector}
\end{equation}

\begin{thm}
Let $(M,\varphi,\xi,\eta,g)$ be a $3$-dimensional trans-Sasakian manifold of
type $(\alpha,\beta)$  equipped with a Newman--Penrose frame
$(\partial,\overline{\partial},\xi)$. Then the structure vector field $\xi$ is
pointwise collinear with its rough Laplacian, that is,
\begin{equation}
\Delta_{g} \xi = \lambda \xi  \label{eq-eigenvector-xi}
\end{equation}
for some smooth function $\lambda$, if and only if
\begin{equation}
\partial (\rho_{\sc np})=0. \label{label-point-colli-cond}
\end{equation}
Moreover, the corresponding function $\lambda$ is given by
\begin{equation}
\lambda = 2\left\vert \rho_{\sc np} \right\vert ^{2}.  \label{eq-EF-tsm-Lap}
\end{equation}
\end{thm}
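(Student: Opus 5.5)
The plan is to read the condition off directly from the expression (\ref{eq-NP-RL-tsm}) for the rough Laplacian of $\xi$ on a trans-Sasakian manifold:
\[
\Delta_{g}\xi = -\overline{\partial}(\overline{\rho_{\sc np}})\partial-\partial(\rho_{\sc np})\overline{\partial}+2|\rho_{\sc np}|^{2}\xi .
\]
Since $(\partial,\overline{\partial},\xi)$ is a frame of the complexified tangent bundle, the equation $\Delta_{g}\xi=\lambda\xi$ holds if and only if the $\partial$- and $\overline{\partial}$-components vanish and the $\xi$-component equals $\lambda$. We compare coefficients.

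First we show that the two horizontal components are conjugate. Because $\Delta_g\xi$ is a real vector field, its $\overline{\partial}$-coefficient is the complex conjugate of its $\partial$-coefficient. Directly: $\overline{\partial}(\overline{\rho_{\sc np}})=\overline{\partial(\rho_{\sc np})}$, since $\overline{\partial}$ is the conjugate of the real-operator combination defining $\partial$ in (\ref{eq-partial-partial-bar}). Therefore the vanishing of the horizontal part of $\Delta_g\xi$ is equivalent to the single complex equation $\partial(\rho_{\sc np})=0$, which is (\ref{label-point-colli-cond}).

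For the forward direction, if $\Delta_g\xi=\lambda\xi$, pairing with $\overline{\partial}$ and using (\ref{eq-NPF-metric}) gives $g(\Delta_g\xi,\overline{\partial})=-\partial(\rho_{\sc np})=\lambda g(\xi,\overline{\partial})=0$. Pairing with $\xi$ gives $\lambda=2|\rho_{\sc np}|^2$, which is (\ref{eq-EF-tsm-Lap}). This value is also consistent with Lemma \ref{lem-covariant-xi} and the Bochner identity (\ref{eq-Bochner-for-1}), since $\lambda=g(\Delta_g\xi,\xi)=\|\nabla\xi\|^2$. For the converse, if $\partial(\rho_{\sc np})=0$, then by conjugation $\overline{\partial}(\overline{\rho_{\sc np}})=0$, and (\ref{eq-NP-RL-tsm}) reduces to $\Delta_g\xi=2|\rho_{\sc np}|^2\xi$. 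The function $2|\rho_{\sc np}|^2=2(\alpha^2+\beta^2)$ is smooth, so $\xi$ is pointwise collinear with its rough Laplacian.

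There is no serious obstacle here. The only point needing care is justifying that $\overline{\partial}(\overline{\rho_{\sc np}})$ is the conjugate of $\partial(\rho_{\sc np})$, so that a single condition controls both horizontal components. This follows from $\overline{\partial}$ being the complex conjugate of $\partial$ as derivations on complex-valued functions.
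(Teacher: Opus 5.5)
Your argument is correct and is essentially the paper's proof: substitute \eqref{eq-NP-RL-tsm} into $\Delta_g\xi=\lambda\xi$, compare coefficients in the frame $(\partial,\overline{\partial},\xi)$, and use that the $\partial$-coefficient is the complex conjugate of the $\overline{\partial}$-coefficient, which you justify explicitly where the paper only says it is easy to see. One small slip: since $g(\partial,\overline{\partial})=1$, pairing with $\overline{\partial}$ picks out the $\partial$-coefficient $-\overline{\partial}(\overline{\rho_{\sc np}})$, so you should pair with $\partial$ to get $-\partial(\rho_{\sc np})$ directly, although by your conjugation remark this changes nothing.
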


\noindent {\bf Proof.} Using (\ref{eq-NP-RL-tsm}) in (\ref{eq-eigenvector-xi}),
we obtain
\begin{equation}
\lambda \xi = -\overline{\partial}(\overline{\rho_{\sc np}})\partial -\partial
(\rho_{\sc np})\overline{\partial} +2\left\vert \rho_{\sc np}
\right\vert^{2}\xi. \label{eq-eigenvector-xi-1}
\end{equation}
Since the frame $(\partial,\overline{\partial},\xi)$ is linearly independent,
it follows from (\ref{eq-eigenvector-xi-1}) that
\begin{equation}
\overline{\partial}(\overline{\rho_{\sc np}})=0,\quad \partial (\rho_{\sc np})=0,
\label{eq-eigenvector-xi-2}
\end{equation}
\begin{equation}
\lambda - 2\left\vert \rho_{\sc np} \right\vert ^{2}=0.
\label{eq-eigenvector-xi-3}
\end{equation}
Hence, in view of (\ref{eq-eigenvector-xi-2}), we can easily see that the
structure vector field $\xi$ is pointwise collinear with its rough Laplacian if
and only if (\ref{label-point-colli-cond}) is true. Moreover,
(\ref{eq-EF-tsm-Lap}) follows from (\ref{eq-eigenvector-xi-3}) directly.
$\blacksquare$

\begin{cor}
Let $(M,\varphi,\xi,\eta,g)$ be a $3$-dimensional Sasakian, Kenmotsu, or
cosymplectic manifold equipped with a Newman--Penrose frame
$(\partial,\overline{\partial},\xi)$. Then the structure vector field $\xi$ is
pointwise collinear with its rough Laplacian. Moreover, in case of Sasakian or
Kenmotsu, $\Delta_g \xi = 2\xi$; and in case of cosymplectic, $\Delta_g \xi
=0$.
\end{cor}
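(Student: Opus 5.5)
This follows directly from the preceding theorem on pointwise collinearity. A Sasakian, Kenmotsu or cosymplectic structure is trans-Sasakian with constant $(\alpha,\beta)$ equal to $(1,0)$, $(0,1)$ or $(0,0)$, respectively. By Remark~\ref{rem-tS-cases} and (\ref{eq-tsm-NP}), the spin coefficient $\rho_{\sc np}=\beta+i\alpha$ is then the constant $i$, $1$ or $0$.

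A constant function is annihilated by every vector field, in particular by the complex vector field $\partial$. Hence $\partial(\rho_{\sc np})=0$, and also $\overline{\partial}(\overline{\rho_{\sc np}})=0$. This is exactly condition (\ref{label-point-colli-cond}), so the preceding theorem shows that $\xi$ is pointwise collinear with its rough Laplacian.

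To identify $\lambda$, we use (\ref{eq-EF-tsm-Lap}), $\lambda=2|\rho_{\sc np}|^2$. Equivalently, we can substitute directly into (\ref{eq-NP-RL-tsm}): the $\partial$ and $\overline{\partial}$ components vanish and the $\xi$ component is $2|\rho_{\sc np}|^2$.
\begin{itemize}
\item Sasakian: $|i|^2=1$, so $\Delta_g\xi=2\xi$.
\item Kenmotsu: $|1|^2=1$, so $\Delta_g\xi=2\xi$.
\item Cosymplectic: $\rho_{\sc np}=0$, so $\Delta_g\xi=0$.
\end{itemize}

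The only point to check is that, in the cosymplectic case, the definition of pointwise collinearity is still satisfied. It is, with $\lambda\equiv0$, since $\xi$ itself is nonzero and the definition places no nonvanishing requirement on $\lambda$. No other step poses an obstacle.
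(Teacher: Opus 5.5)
Your proof is correct and follows the same route the paper intends: the corollary is read off from the preceding collinearity theorem, since $\rho_{\mathrm{np}}$ is the constant $i$, $1$ or $0$, so $\partial(\rho_{\mathrm{np}})=0$ and $\lambda=2|\rho_{\mathrm{np}}|^2$ equals $2$, $2$ and $0$ respectively. Your check that $\lambda\equiv 0$ is admissible in the cosymplectic case (the definition only requires $\xi\neq 0$) is a detail the paper leaves implicit.
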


Let $(M,g)$ be a Riemannian manifold. A vector field $X$ on $M$ is said to be
parallel \cite[Petersen 2006, p.~28]{Petersen-06-Book} if
\begin{equation}
\nabla X=0.  \label{eq-parallel-vector-field}
\end{equation}

\begin{thm}\label{th-RM-xi-Par}
Let $(M,g)$ be a $3$-dimensional Riemannian manifold equipped with a
Newman--Penrose frame $(\partial,\overline{\partial},\xi)$. Then the structure
vector field $\xi$ is parallel if and only if $\xi$ generates a geodesic,
shear-free, non-expanding, and twist-free congruence.
\end{thm}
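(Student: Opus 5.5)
The plan is to reduce parallelity of $\xi$ to the vanishing of the three spin coefficients $\kappa_{\sc np}$, $\sigma_{\sc np}$, $\rho_{\sc np}$, and then read off the geometric meaning of each.

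First I express $\nabla\xi$ in the Newman--Penrose frame. By (\ref{eq-PNF-nabla(partial,xi)}) and (\ref{eq-PNF-nabla(xi,xi)}), together with the complex conjugate of (\ref{eq-PNF-nabla(partial,xi)}), namely $\nabla_{\overline{\partial}}\xi=\rho_{\sc np}\overline{\partial}-\overline{\sigma_{\sc np}}\partial$, we have
\[
\nabla_{\partial}\xi=\overline{\rho_{\sc np}}\partial-\sigma_{\sc np}\overline{\partial},\qquad
\nabla_{\overline{\partial}}\xi=\rho_{\sc np}\overline{\partial}-\overline{\sigma_{\sc np}}\partial,\qquad
\nabla_{\xi}\xi=-\overline{\kappa_{\sc np}}\partial-\kappa_{\sc np}\overline{\partial}.
\]
Since $(\partial,\overline{\partial},\xi)$ is a (complex) frame, $\nabla\xi=0$ holds if and only if all three derivatives vanish. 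By linear independence of $\partial,\overline{\partial}$, this is equivalent to
\[
\kappa_{\sc np}=\sigma_{\sc np}=\rho_{\sc np}=0.
\]
A quicker route to the same equivalence is Lemma~\ref{lem-covariant-xi}: $\nabla\xi=0$ if and only if $\|\nabla\xi\|^2=0$, and by (\ref{eq-covariant-der-xi}) this happens if and only if $|\kappa_{\sc np}|^2+|\rho_{\sc np}|^2+|\sigma_{\sc np}|^2=0$. Both arguments give the same conclusion, and I would present the direct frame argument while mentioning the norm identity as confirmation.

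Next I translate the three conditions. $\kappa_{\sc np}=0$ is the geodesic condition, as noted after (\ref{eq-spin-coe-1}). $\sigma_{\sc np}=0$ is the shear-free condition. Writing $\rho_{\sc np}=\Theta_{\sc np}+i\omega_{\sc np}$ as in (\ref{eq-rho}), the condition $\rho_{\sc np}=0$ is equivalent to $\Theta_{\sc np}=0$ (non-expanding) and $\omega_{\sc np}=0$ (twist-free). These translations hold in both directions, which gives the stated equivalence. The statement is gauge-independent, since under (\ref{eq-NPF-gauge-spin-coe-1}) only phases change and the vanishing of each coefficient is preserved.

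The only point that needs care is consistency of conventions. The real-frame quantities $f_{ij}$ used for shear, expansion and twist in Section~\ref{Sect-tsm-equiva-NP} must correspond to $\sigma_{\sc np}$ and $\rho_{\sc np}$. A short check does this: compute $g(\partial,\nabla_{\overline{\partial}}\xi)=\tfrac12(f_{11}+f_{22})+\tfrac{i}{2}(f_{21}-f_{12})$ and $-g(\partial,\nabla_\partial\xi)=-\tfrac12\bigl((f_{11}-f_{22})-i(f_{12}+f_{21})\bigr)$. This shows that $\sigma_{\sc np}=0$ is exactly (\ref{eq:shearfree}), and that $\rho_{\sc np}=\Theta+i\omega$.
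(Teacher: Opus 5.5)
Your proposal is correct and is essentially the paper's argument. The paper reduces parallelity to $\kappa_{\sc np}=\rho_{\sc np}=\sigma_{\sc np}=0$ via the norm identity of Lemma~\ref{lem-covariant-xi} (your ``quicker route''), and then reads off the geodesic, shear-free, non-expanding and twist-free conditions exactly as you do; your direct frame argument gives the same reduction without the Bochner formula behind that lemma, and your check of the $f_{ij}$ conventions is correct, though not needed.
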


\noindent {\bf Proof.} In view of Lemma~\ref{lem-covariant-xi} and
(\ref{eq-parallel-vector-field}), the structure vector field $\xi$ is parallel
if and only if
\begin{equation}
\kappa_{\sc np}=\rho_{\sc np}=\sigma_{\sc np}=0.
\end{equation}
In particular, $\kappa_{\sc np} = 0$ implies that $\xi$ is geodesic,
$\sigma_{\sc np} = 0$ implies that $\xi$ is shear-free, and $\rho_{\sc np} = 0$
implies that $\xi$ is both non-expanding and twist-free. $\blacksquare$

\begin{thm}\label{th-cosym-equi}
Let $(M,\varphi,\xi,\eta,g)$ be a $3$-dimensional trans-Sasakian manifold
equipped with a Newman--Penrose frame $(\partial,\overline{\partial},\xi)$.
Then the structure vector field $\xi$ is parallel if and only if $M$ is
cosymplectic.
\end{thm}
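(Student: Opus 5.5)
The plan is to combine Theorem~\ref{th-RM-xi-Par} with the Newman--Penrose characterization (\ref{eq-tsm-NP}) of trans-Sasakian structures and with Remark~\ref{rem-tS-cases}(g). The argument reduces everything to the single scalar condition $\rho_{\sc np}=0$.

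First, I would note that by (\ref{eq-tsm-NP}) (equivalently Proposition~\ref{prop:main-equivalence}), on any $3$-dimensional trans-Sasakian manifold one always has $\kappa_{\sc np}=\sigma_{\sc np}=0$ and $\rho_{\sc np}=\beta+i\alpha$. Hence, by Lemma~\ref{lem-covariant-xi}, or more directly by (\ref{eq-covariant-der-xi-tsm}), we get $\Vert\nabla\xi\Vert^{2}=2|\rho_{\sc np}|^{2}=2(\alpha^{2}+\beta^{2})$.

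For the forward direction, suppose $\xi$ is parallel. Then $\Vert\nabla\xi\Vert^{2}=0$, so $\rho_{\sc np}=0$. Taking real and imaginary parts gives $\alpha=\beta=0$, which is exactly the cosymplectic case (g) of Remark~\ref{rem-tS-cases}. Alternatively, Theorem~\ref{th-RM-xi-Par} gives $\kappa_{\sc np}=\rho_{\sc np}=\sigma_{\sc np}=0$ directly, and we conclude in the same way.

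For the converse, suppose $M$ is cosymplectic. Then $\alpha=\beta=0$, so $\rho_{\sc np}=0$; together with $\kappa_{\sc np}=\sigma_{\sc np}=0$, Theorem~\ref{th-RM-xi-Par} shows that $\xi$ is parallel. As an independent check, (\ref{eq-nabla_X(xi)}) with $\alpha=\beta=0$ gives $\nabla_X\xi=0$ for all $X$.

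There is no real obstacle here. The only point needing care is that vanishing of the complex function $\rho_{\sc np}$ is equivalent to the vanishing of both real functions $\alpha$ and $\beta$, which is immediate from $\rho_{\sc np}=\beta+i\alpha$ with $\alpha,\beta$ real.
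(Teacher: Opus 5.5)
Your proposal is correct and follows essentially the same route as the paper. The paper's proof simply combines Theorem~\ref{th-RM-xi-Par} (which says $\xi$ is parallel iff $\kappa_{\sc np}=\rho_{\sc np}=\sigma_{\sc np}=0$) with (\ref{eq-tsm-NP}) and Remark~\ref{rem-tS-cases}(g), so that parallelity reduces to $\rho_{\sc np}=\beta+i\alpha=0$; your extra checks via $\Vert\nabla\xi\Vert^{2}=2|\rho_{\sc np}|^{2}$ and via (\ref{eq-nabla_X(xi)}) are consistent with this but not needed.
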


\noindent {\bf Proof.} In view of Remark~(\ref{rem-tS-cases}), and
Theorem~\ref{th-RM-xi-Par}, we get the result. $\blacksquare$

Now, we present the following example to illustrate
Theorem~\ref{th-cosym-equi}.
\begin{ex-new}
Let
\[
M=\{(x^{1},x^{2},x^{3})\in {\Bbb R}^{3}\}
\]
be a $3$-dimensional smooth manifold with standard coordinates
$(x^{1},x^{2},x^{3})$. Consider a global frame $(B_{1},B_{2},B_{3})$ on $M$
given by
\[
B_{1} = \frac{\partial}{\partial x^{1}},\quad B_{2} = \frac{\partial}{\partial
x^{2}},\quad B_{3} = \frac{\partial}{\partial x^{3}}.
\]
Define the associated Newman--Penrose frame
$(\partial,\overline{\partial},\xi)$ by
\[
\partial = \frac{1}{\sqrt{2}}(B_{1}-iB_{2}),
\quad \overline{\partial}=\frac{1}{\sqrt{2}}(B_{1}+iB_{2}),\quad \xi
=B_{3}.
\]
Define the $1$-form $\eta $ by $\eta (X)=g(X,B_{3})$ for every vector field $X$
on $M$. Let $\varphi $ be the $(1,1)$-tensor field defined by
\[
\varphi (\partial) = i\partial,\quad \varphi
(\overline{\partial})=-i\overline{\partial},\quad \varphi (\xi)=0.
\]
Define the Riemannian metric $g$ by
\[
g(\xi,\xi) = g(\partial,\overline{\partial})=1,\quad
g(\xi,\partial)=g(\partial,\partial) =
g(\overline{\partial},\overline{\partial})=0.
\]
Let $\nabla$ be the Levi-Civita connection of $g$. Then, by a direct
calculation, we obtain
\[
[B_{1},B_{2}]=[B_{1},B_{3}]=[B_{2},B_{3}]=0,
\]
\[
\nabla_{B_{1}}\xi=\nabla_{B_{2}}\xi=\nabla_{B_{3}}\xi=0,
\]
\[
\nabla_{\partial}\xi=\nabla_{\overline{\partial}}\xi=0.
\]
In particular,
\[
\nabla_{B_{i}}B_{j}=0\quad {\rm for\;all\;}i,j,
\]
that is, $\xi$ is parallel. Using the above computation in
(\ref{eq-spin-coe-1}), we obtain
\[
\kappa_{\sc np} = -g(\partial,\nabla_{\xi}\xi) = 0, \quad \sigma_{\sc np} = -
g(\partial,\nabla_{\partial}\xi) = 0,\quad \rho_{\sc np} =
g(\partial,\nabla_{\overline{\partial}}\xi) = 0.
\]
Hence $(M,\varphi,\xi,\eta,g)$ is a trans-Sasakian manifold of type
$(\alpha,\beta)= (0,0)$. Consequently, it is a cosymplectic manifold. In this
case, integral curves of $\xi$ generate a congruence that is geodesic,
shear-free, non-expanding, and twist-free.
\end{ex-new}

Let $(M,g)$ be a $3$-dimensional Riemannian manifold equipped with a
Newman-Penrose frame $(\partial,\overline{\partial},\xi)$. Then the structure
vector field $\xi $ is called harmonic vector field if (see \cite[Dragomir and
Perrone 2012]{Dragomir-Perrone-12-Book})
\begin{equation}
\Delta_{g}\xi =\left\Vert \nabla \xi \right\Vert^{2}\xi. \label{eq-xi-harmonic}
\end{equation}

\begin{lem} \label{lem-harmonic-xi}
Let $(M,g)$ be a $3$-dimensional Riemannian manifold equipped with a
Newman--Penrose frame $(\partial,\overline{\partial},\xi)$. Then the structure
vector field $\xi$ is harmonic if and only if
\begin{equation}
\partial(\rho_{\sc np})-\overline{\partial}(\sigma_{\sc np})-2\sigma_{\sc np}
\overline{\beta_{\sc np}}-\overline{\rho_{\sc np}}\kappa_{\sc np} - \xi
(\kappa_{\sc np})-\kappa_{\sc np} \overline{\epsilon_{\sc
np}}-\overline{\kappa_{\sc np}}\sigma_{\sc np}=0.\label{eq-harmonic-RM}
\end{equation}
\end{lem}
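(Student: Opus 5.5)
The plan is to combine the general expression (\ref{eq-Laplacian-NP-xi}) for $\Delta_g\xi$ with the formula (\ref{eq-covariant-der-xi}) for $\Vert\nabla\xi\Vert^2$ from Lemma~\ref{lem-covariant-xi}. Inserting both into the defining relation (\ref{eq-xi-harmonic}) turns harmonicity of $\xi$ into a linear relation among the frame vectors $\partial$, $\overline{\partial}$, $\xi$.

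First, the $\xi$-components agree automatically. The $\xi$-coefficient of $\Delta_g\xi$ in (\ref{eq-Laplacian-NP-xi}) is $2(|\kappa_{\sc np}|^2+|\rho_{\sc np}|^2+|\sigma_{\sc np}|^2)$. By (\ref{eq-covariant-der-xi}), this is exactly $\Vert\nabla\xi\Vert^2$. This is of course just the Bochner identity (\ref{eq-Bochner-for-1}) again. Hence (\ref{eq-xi-harmonic}) holds if and only if the $\partial$- and $\overline{\partial}$-components of $\Delta_g\xi$ vanish.

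Because $(\partial,\overline{\partial},\xi)$ is a frame, this vanishing splits into two scalar equations:
\begin{itemize}
\item the coefficient of $\partial$ in (\ref{eq-Laplacian-NP-xi}) vanishes;
\item the coefficient of $\overline{\partial}$ in (\ref{eq-Laplacian-NP-xi}) vanishes.
\end{itemize}
The $\overline{\partial}$-coefficient, multiplied by $-1$, is exactly the left-hand side of (\ref{eq-harmonic-RM}). So it remains to show that the $\partial$-equation is redundant, namely that it is the complex conjugate of the $\overline{\partial}$-equation.

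This conjugation check is the only step needing care. The vector $\Delta_g\xi$ is real, since $\Delta_g$ is a real operator and $\xi$ is real, and $\overline{\partial}$ is the conjugate of $\partial$. Therefore the $\partial$-coefficient must be the complex conjugate of the $\overline{\partial}$-coefficient. I would confirm this termwise:
\begin{itemize}
\item $\overline{\partial(\rho_{\sc np})}=\overline{\partial}(\overline{\rho_{\sc np}})$;
\item $\overline{\overline{\partial}(\sigma_{\sc np})}=\partial(\overline{\sigma_{\sc np}})$;
\item $\overline{\kappa_{\sc np}\overline{\epsilon_{\sc np}}}=\overline{\kappa_{\sc np}}\epsilon_{\sc np}$.
\end{itemize}
The remaining terms conjugate in the same way. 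Since a complex number vanishes exactly when its conjugate does, the two scalar equations are equivalent, and harmonicity reduces to the single condition (\ref{eq-harmonic-RM}).
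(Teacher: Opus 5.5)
Your proposal is correct and is the paper's proof: you substitute (\ref{eq-Laplacian-NP-xi}) and (\ref{eq-covariant-der-xi}) into (\ref{eq-xi-harmonic}), the $\xi$-components cancel, and linear independence of the frame leaves two scalar equations, of which the $\partial$-equation is the complex conjugate of the $\overline{\partial}$-equation. Your remark that this conjugacy is forced because $\Delta_g\xi$ is a real vector field justifies it a little more cleanly than the paper's bare observation, but the argument is otherwise the same.
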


\noindent {\bf Proof.} Using (\ref{eq-Laplacian-NP-xi}) and
(\ref{eq-covariant-der-xi}) in (\ref{eq-xi-harmonic}), we obtain
\begin{eqnarray}
0&=&\left(\overline{\partial}(\overline{\rho_{\sc np}})
-\partial(\overline{\sigma_{\sc np}})-2\overline{\sigma_{\sc np}}\beta_{\sc np}
-\rho_{\sc np} \overline{\kappa_{\sc np}}-\xi (\overline{\kappa_{\sc np}})
-\overline{\kappa_{\sc np}}\epsilon_{\sc np} -\kappa_{\sc np}
\overline{\sigma_{\sc np}}\right) \partial\nonumber \\
&&+\left(\partial(\rho_{\sc np})-\overline{\partial}(\sigma_{\sc np})
-2\sigma_{\sc np} \overline{\beta_{\sc np}}-\overline{\rho_{\sc np}}\kappa_{\sc
np} -\xi (\kappa_{\sc np})-\kappa_{\sc np} \overline{\epsilon_{\sc
np}}-\overline{\kappa_{\sc np}}\sigma_{\sc np} \right) \overline{\partial}.
\label{eq-harmonic-RM-1}
\end{eqnarray}
Since the frame $(\partial,\overline{\partial},\xi)$ is linearly independent,
it follows from (\ref{eq-harmonic-RM-1}) that
\begin{eqnarray}
&&\overline{\partial}(\overline{\rho_{\sc np}}) -\partial(\overline{\sigma_{\sc np}})
-2\overline{\sigma_{\sc np}}\beta_{\sc np} -\rho_{\sc np} \overline{\kappa_{\sc np}}
-\xi (\overline{\kappa_{\sc np}})-\overline{\kappa_{\sc np}}\epsilon_{\sc np}
-\kappa_{\sc np} \overline{\sigma_{\sc np}}=0, \label{eq-harmonic-RM-2}\\
&&\partial(\rho_{\sc np})-\overline{\partial}(\sigma_{\sc np}) -2\sigma_{\sc
np} \overline{\beta_{\sc np}}-\overline{\rho_{\sc np}}\kappa_{\sc np} - \xi
(\kappa_{\sc np})-\kappa_{\sc np} \overline{\epsilon_{\sc
np}}-\overline{\kappa_{\sc np}}\sigma_{\sc np}=0.\label{eq-harmonic-RM-3}
\end{eqnarray}
Observing that (\ref{eq-harmonic-RM-2}) is the complex conjugate of
(\ref{eq-harmonic-RM-3}), it suffices to consider (\ref{eq-harmonic-RM-3}),
which yields the result. $\blacksquare$

\begin{thm}\label{th-harmonic-xi-tsm}
Let $(M,\varphi,\xi,\eta,g)$ be a $3$-dimensional trans-Sasakian manifold
equipped with a Newman--Penrose frame $(\partial,\overline{\partial},\xi)$.
Then the structure vector field $\xi$ is harmonic if and only if
\begin{equation}
\partial(\rho_{\sc np})=0. \label{eq-harmonic-tsm}
\end{equation}
\end{thm}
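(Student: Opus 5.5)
The plan is to specialize Lemma~\ref{lem-harmonic-xi} to the trans-Sasakian setting, exactly as the earlier results of this section were obtained from their general counterparts. By (\ref{eq-tsm-NP}), equivalently Proposition~\ref{prop:main-equivalence} in Newman--Penrose form (\ref{eq:NP-equivalence}), a $3$-dimensional trans-Sasakian structure of type $(\alpha,\beta)$ satisfies $\kappa_{\sc np}=0$ and $\sigma_{\sc np}=0$ identically on $M$. Since these are identities of functions, all of their derivatives also vanish:
\[
\overline{\partial}(\sigma_{\sc np})=0, \qquad \xi(\kappa_{\sc np})=0 .
\]

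I will substitute these into the harmonicity criterion (\ref{eq-harmonic-RM}). Every term other than $\partial(\rho_{\sc np})$ contains a factor $\kappa_{\sc np}$, $\overline{\kappa_{\sc np}}$ or $\sigma_{\sc np}$, or a derivative of one of them. These are
\[
\overline{\partial}(\sigma_{\sc np}),\quad 2\sigma_{\sc np}\overline{\beta_{\sc np}},\quad \overline{\rho_{\sc np}}\kappa_{\sc np},\quad \xi(\kappa_{\sc np}),\quad \kappa_{\sc np}\overline{\epsilon_{\sc np}},\quad \overline{\kappa_{\sc np}}\sigma_{\sc np},
\]
so each of them drops out. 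Lemma~\ref{lem-harmonic-xi} thus reduces to the statement that $\xi$ is harmonic if and only if $\partial(\rho_{\sc np})=0$, which is (\ref{eq-harmonic-tsm}).

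As a consistency check, the same conclusion can be reached directly. Theorem (\ref{eq-NP-RL-tsm}) gives $\Delta_g\xi$, and (\ref{eq-covariant-der-xi-tsm}) gives $\Vert\nabla\xi\Vert^2=2|\rho_{\sc np}|^2$. Hence
\[
\Delta_g\xi-\Vert\nabla\xi\Vert^2\xi=-\overline{\partial}(\overline{\rho_{\sc np}})\partial-\partial(\rho_{\sc np})\overline{\partial}.
\]
Because $(\partial,\overline{\partial},\xi)$ is a frame, this vanishes exactly when both coefficients vanish. The two coefficient conditions are complex conjugates of each other, since $\overline{\partial(\rho_{\sc np})}=\overline{\partial}(\overline{\rho_{\sc np}})$ because $\overline{\partial}$ is the conjugate of $\partial$. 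So it suffices to impose $\partial(\rho_{\sc np})=0$.

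There is no real obstacle here; the argument is a direct substitution into earlier results. The only point needing care is that $\kappa_{\sc np}=\sigma_{\sc np}=0$ holds on all of $M$ (not merely pointwise), which is what justifies setting their derivatives to zero. A remark could also note that the criterion coincides with the pointwise collinearity condition (\ref{label-point-colli-cond}), consistent with $\lambda=2|\rho_{\sc np}|^2=\Vert\nabla\xi\Vert^2$.
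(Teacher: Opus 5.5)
Your proposal is correct and matches the paper's proof, which likewise substitutes $\kappa_{\sc np}=\sigma_{\sc np}=0$ from (\ref{eq-tsm-NP}) into the harmonicity criterion (\ref{eq-harmonic-RM}) of Lemma~\ref{lem-harmonic-xi}, leaving only $\partial(\rho_{\sc np})=0$. Your consistency check via (\ref{eq-NP-RL-tsm}) and (\ref{eq-covariant-der-xi-tsm}) is also valid and gives the same condition.
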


\noindent {\bf Proof.} In view of Lemma~\ref{lem-harmonic-xi} and
(\ref{eq-tsm-NP}), we get (\ref{eq-harmonic-tsm}). $\blacksquare$

\begin{cor}
Let $(M,\varphi,\xi,\eta,g)$ be a $3$-dimensional Sasakian, Kenmotsu or
cosymplectic manifold equipped with a Newman--Penrose frame
$(\partial,\overline{\partial},\xi)$. Then the structure vector field $\xi$ is
harmonic.
\end{cor}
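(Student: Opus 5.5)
The plan is to specialize Theorem~\ref{th-harmonic-xi-tsm} to the three subclasses, using the values of $\rho_{\sc np}$ listed in Remark~\ref{rem-tS-cases}. Since each of these manifolds is a $3$-dimensional trans-Sasakian manifold, Theorem~\ref{th-harmonic-xi-tsm} applies directly. It says that $\xi$ is harmonic exactly when $\partial(\rho_{\sc np})=0$, so it suffices to check this single scalar condition.

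By Remark~\ref{rem-tS-cases}, the spin coefficient $\rho_{\sc np}=\beta+i\alpha$ takes the following values:
\[
\rho_{\sc np}=i \ \text{(Sasakian, $\alpha=1,\beta=0$)},\qquad \rho_{\sc np}=1 \ \text{(Kenmotsu, $\alpha=0,\beta=1$)},\qquad \rho_{\sc np}=0 \ \text{(cosymplectic)}.
\]
In each case $\rho_{\sc np}$ is a constant function on $M$. The complex vector field $\partial$ acts on functions as a derivation, so it annihilates constants, and $\partial(\rho_{\sc np})=0$ follows immediately. Theorem~\ref{th-harmonic-xi-tsm} then gives harmonicity of $\xi$.

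As a consistency check, I would also compare with the earlier corollary on pointwise collinearity. There, $\Delta_g\xi=2\xi$ in the Sasakian and Kenmotsu cases and $\Delta_g\xi=0$ in the cosymplectic case. By (\ref{eq-covariant-der-xi-tsm}) we have $\|\nabla\xi\|^2=2|\rho_{\sc np}|^2$, which equals $2$, $2$ and $0$ respectively. Hence $\Delta_g\xi=\|\nabla\xi\|^2\xi$ holds directly, which is (\ref{eq-xi-harmonic}).

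There is no real obstacle here. The only point needing care is that the conclusion uses the fact that $\rho_{\sc np}$ is well defined, i.e.\ gauge invariant with spin weight $0$. Because of this, $\partial(\rho_{\sc np})=\eth(\rho_{\sc np})$, so the vanishing of $\partial(\rho_{\sc np})$ does not depend on the choice of Newman--Penrose frame.
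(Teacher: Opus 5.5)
Your proposal is correct and follows the paper's intended argument: the corollary is a direct specialization of Theorem~\ref{th-harmonic-xi-tsm}, because $\rho_{\sc np}$ is the constant $i$, $1$ or $0$ in the three cases, so $\partial(\rho_{\sc np})=0$. Your cross-check via $\Delta_g\xi=2\xi$ (resp.\ $\Delta_g\xi=0$) and $\|\nabla\xi\|^2=2|\rho_{\sc np}|^2$ is also valid; the gauge-invariance remark is harmless but not needed, since any derivation annihilates constants.
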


Let $(M,g)$ be a $3$-dimensional Riemannian manifold. Let $(e_{1},e_{2},e_{3})$
be a local orthonormal frame on $M$. The divergence of a vector field $X$ is
defined by (see \cite[Petersen 2006, p.~28]{Petersen-06-Book})
\begin{equation}
{\rm div}(X) = \sum_{i=1}^{3}g\left(\nabla_{e_{i}}X,e_{i}\right).\label{eq-div}
\end{equation}

\begin{lem}
\label{lem-div-xi-NP} Let $(M,g)$ be a $3$-dimensional Riemannian manifold
equipped with a Newman--Penrose frame $(\partial,\overline{\partial},\xi)$.
Then,
\begin{equation}
{\rm div}(\xi) = 2\Theta_{\sc np}.  \label{eq-div-NP}
\end{equation}
\end{lem}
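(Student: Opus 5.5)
We prove the formula by rewriting the divergence in the Newman--Penrose frame and then reading off each term from the connection formulas already recorded.

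First, use the real orthonormal frame $(e_1,e_2,\xi)$ in (\ref{eq-div}):
\[
{\rm div}(\xi)=g(\nabla_{e_1}\xi,e_1)+g(\nabla_{e_2}\xi,e_2)+g(\nabla_\xi\xi,\xi).
\]
The last term vanishes, because $g(\xi,\xi)=1$ gives $g(\nabla_\xi\xi,\xi)=\frac12\xi(g(\xi,\xi))=0$.

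Next, substitute $e_1=\frac{1}{\sqrt2}(\partial+\overline{\partial})$ and $e_2=\frac{i}{\sqrt2}(\partial-\overline{\partial})$ from (\ref{eq-scalar-NP-2}). Expanding bilinearly, exactly as in the proof of (\ref{eq-scalar-NP}), the mixed terms $g(\nabla_\partial\xi,\partial)$ and $g(\nabla_{\overline\partial}\xi,\overline\partial)$ cancel between the $e_1$ and $e_2$ contributions. This leaves
\[
{\rm div}(\xi)=g(\nabla_{\partial}\xi,\overline{\partial})+g(\nabla_{\overline{\partial}}\xi,\partial).
\]

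Now evaluate the two surviving terms. By (\ref{eq-PNF-nabla(partial,xi)}) together with the metric relations (\ref{eq-NPF-metric}), we have $g(\nabla_\partial\xi,\overline\partial)=\overline{\rho_{\sc np}}$. Taking complex conjugates (the connection is real), or directly from the definition (\ref{eq-spin-coe-1}), we have $g(\nabla_{\overline\partial}\xi,\partial)=\rho_{\sc np}$. Hence
\[
{\rm div}(\xi)=\rho_{\sc np}+\overline{\rho_{\sc np}}=2\,{\rm Re}\,\rho_{\sc np}=2\Theta_{\sc np}
\]
by (\ref{eq-rho}).

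The only step that needs care is the bookkeeping of the factors $i$ and $\frac{1}{\sqrt2}$ in the expansion of the $e_2$ term. I would do it explicitly: $g(\nabla_{e_2}\xi,e_2)=-\frac12 g(\nabla_{\partial-\overline\partial}\xi,\partial-\overline\partial)$, which, combined with the $e_1$ term, gives exactly the stated cross terms.
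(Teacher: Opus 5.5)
Your proposal is correct and follows the paper's proof essentially step for step. You expand ${\rm div}(\xi)$ in $(e_1,e_2,\xi)$, substitute $e_1,e_2$ in terms of $\partial,\overline{\partial}$ to reach $g(\nabla_{\partial}\xi,\overline{\partial})+g(\nabla_{\overline{\partial}}\xi,\partial)=\overline{\rho_{\sc np}}+\rho_{\sc np}=2\Theta_{\sc np}$, and you also justify explicitly why $g(\nabla_\xi\xi,\xi)=0$, which the paper drops silently (the terms that cancel are the pure ones $g(\nabla_\partial\xi,\partial)$ and $g(\nabla_{\overline\partial}\xi,\overline\partial)$ rather than ``mixed'' ones, but that is only wording).
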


\noindent {\bf Proof.} From (\ref{eq-div}), we have
\begin{equation}
{\rm div}(\xi) = g\left(\nabla_{e_{1}}\xi,e_{1}\right) +
g\left(\nabla_{e_{2}}\xi,e_{2}\right) +g\left(\nabla_{\xi}\xi,\xi \right).
\label{eq-div-NP-1}
\end{equation}
Using (\ref{eq-Lap-X-NP-1}) in (\ref{eq-div-NP-1}), we get
\begin{equation}
{\rm div}(\xi) = g\left(\nabla_{\overline{\partial}}\xi,\partial \right) +
\left(\nabla_\partial \xi,{\overline{\partial}}\right).  \label{eq-Lap-X-NP-2}
\end{equation}
Using (\ref{eq-spin-coe-1}) in (\ref{eq-Lap-X-NP-2}), we obtain
\begin{equation}
{\rm div}(\xi) = \rho_{\sc np} + \overline{\rho_{\sc np}}.
\label{eq-Lap-X-NP-3}
\end{equation}
In view of (\ref{eq-rho}) and (\ref{eq-Lap-X-NP-3}), we obtain
(\ref{eq-div-NP}). $\blacksquare$

\begin{lem} \label{lem-div-free-xi}
Let $(M,g)$ be a $3$-dimensional Riemannian manifold equipped with a
Newman--Penrose frame $(\partial,\overline{\partial},\xi)$. Then the vector
field $\xi$ is divergence-free if and only if the congruence generated by $\xi$
is non-expanding, that is, $\Theta_{\sc np} =0$. Moreover, the congruence
generated by $\xi$ is expanding (respectively, contracting) if and only if
$\Theta_{\sc np} > 0$ (respectively, $\Theta_{\sc np} < 0$).
\end{lem}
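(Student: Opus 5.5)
The plan is to read both assertions off Lemma~\ref{lem-div-xi-NP}. For the first assertion, Lemma~\ref{lem-div-xi-NP} gives ${\rm div}(\xi)=2\Theta_{\sc np}$. Hence ${\rm div}(\xi)=0$ holds if and only if $\Theta_{\sc np}=0$. This is precisely the statement that the congruence generated by $\xi$ is non-expanding, because $\Theta_{\sc np}$ was defined in (\ref{eq-rho}) as the expansion of $\xi$.

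For the second assertion, I would use the interpretation of $\Theta_{\sc np}$ recalled after (\ref{eq-rho}): it measures the rate of change of the area of a cross-section orthogonal to $\xi$. To make this precise, let $\phi_t$ be the flow of $\xi$ and let $\Sigma$ be a small transverse $2$-disc. The quantity $\frac{d}{dt}\big|_{t=0}$ of the area density of $\phi_t(\Sigma)$, taken relative to the transverse metric, equals $\frac12\,{\rm tr}_{\cal D}({\pounds}_\xi g)$. By (\ref{eq-lie-g-Asmy}), this trace is ${\rm tr}\,A^{\mathrm{sym}}={\rm tr}\,A=2\Theta$. Moreover, by Lemma~\ref{lem-div-xi-NP} it also equals $g(\nabla_{e_1}\xi,e_1)+g(\nabla_{e_2}\xi,e_2)=\rho_{\sc np}+\overline{\rho_{\sc np}}=2\Theta_{\sc np}$.

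Consequently, cross-sectional area increases along the flow (expanding) exactly when $\Theta_{\sc np}>0$, and decreases (contracting) exactly when $\Theta_{\sc np}<0$. Equivalently, since ${\rm div}(\xi)$ is the logarithmic rate of change of the Riemannian volume under the flow, the sign of ${\rm div}(\xi)=2\Theta_{\sc np}$ determines expansion versus contraction.

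The only delicate point is definitional rather than computational. One must fix what ``expanding'' and ``contracting'' mean, namely via the sign of the transverse area rate, or equivalently via ${\pounds}_\xi$ of the transverse area form. One must then note that the horizontal trace of $\nabla\xi$ coincides with the full divergence. This holds because $g(\nabla_\xi\xi,\xi)=0$, which follows from $\|\xi\|=1$.
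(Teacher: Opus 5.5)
Your proposal is correct and follows the paper's route: both assertions are read off from ${\rm div}(\xi)=2\Theta_{\sc np}$ in Lemma~\ref{lem-div-xi-NP}. The only difference is that you justify what ``expanding/contracting'' means, via the transverse area rate $\frac12{\rm tr}_{\cal D}({\pounds}_\xi g)={\rm tr}\,A$ from (\ref{eq-lie-g-Asmy}), whereas the paper simply takes expanding (contracting) to mean ${\rm div}(\xi)>0$ (respectively ${\rm div}(\xi)<0$).
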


\noindent {\bf Proof.} In view of Lemma~\ref{lem-div-xi-NP}, the vector field
$\xi$ is divergence-free if and only if $\Theta_{\sc np} =0$. Moreover, the
congruence generated by $\xi$ is expanding (respectively, contracting) if and
only if ${\rm div}(\xi)>0$ (respectively, ${\rm div}(\xi)<0$). By
Lemma~\ref{lem-div-xi-NP}, this is equivalent to $\Theta_{\sc np} > 0$
(respectively, $\Theta_{\sc np} < 0$). $\blacksquare$

\begin{thm}\label{th-tsm-equi}
Let $(M,\varphi,\xi,\eta,g)$ be a $3$-dimensional trans-Sasakian manifold of
type $(\alpha,\beta)$ equipped with a Newman--Penrose frame
$(\partial,\overline{\partial},\xi)$. Then the following are equivalent.
\begin{itemize}
\item[{\bf (a)}] The vector field $\xi$ is divergence-free.

\item[{\bf (b)}] The congruence generated by $\xi$ is non-expanding.

\item[{\bf (c)}] $M$ is a ${\cal C}_{6}$-manifold.
\end{itemize}
\end{thm}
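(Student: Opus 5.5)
The plan is to chain the already-established identifications, passing through the expansion $\Theta_{\sc np}$. First, the equivalence (a)$\Leftrightarrow$(b) is exactly Lemma~\ref{lem-div-free-xi}. That lemma holds for any unit vector field $\xi$ on a $3$-dimensional Riemannian manifold with a Newman--Penrose frame, so in particular it holds here. Concretely, Lemma~\ref{lem-div-xi-NP} gives ${\rm div}(\xi)=2\Theta_{\sc np}$, so ${\rm div}(\xi)=0$ if and only if $\Theta_{\sc np}=0$, which is by definition the non-expanding condition.

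Second, for (b)$\Leftrightarrow$(c), I will use the trans-Sasakian characterization (\ref{eq-tsm-NP}), equivalently Proposition~\ref{prop:main-equivalence}: on a trans-Sasakian manifold of type $(\alpha,\beta)$ one has $\rho_{\sc np}=\beta+i\alpha$. Comparing with the decomposition $\rho_{\sc np}=\Theta_{\sc np}+i\omega_{\sc np}$ from (\ref{eq-rho}) gives $\beta=\Theta_{\sc np}$ and $\alpha=\omega_{\sc np}$, since $\alpha,\beta$ are real functions. Thus $\Theta_{\sc np}=0$ holds exactly when $\beta=0$. By item (a) of the list of subclasses in Section~\ref{Sect-Prel}, equivalently Remark~\ref{rem-tS-cases}(a), the condition $\beta=0$ is precisely the ${\cal C}_{6}$ condition, i.e.\ $\rho_{\sc np}=i\omega_{\sc np}$.

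Combining the two steps gives the cycle (a)$\Leftrightarrow$(b)$\Leftrightarrow$(c). As a cross-check, I could also compute ${\rm div}(\xi)$ directly from (\ref{eq-nabla_X(xi)}): tracing $-\alpha\varphi X+\beta(X-\eta(X)\xi)$ gives $2\beta$, since ${\rm tr}\,\varphi=0$, and this agrees with $2\Theta_{\sc np}$.

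There is no real obstacle. The only point needing care is that the functions $\alpha$ and $\beta$ in the type are uniquely determined by the structure, so that "$\beta=0$" is a well-defined condition. This follows from (\ref{eq-nabla_X(xi)}) restricted to ${\cal D}$, which identifies $\beta$ as half the trace and $\alpha$ as the skew part of $A$.
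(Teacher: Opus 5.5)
Your proposal is correct and follows essentially the same route as the paper, whose proof simply combines Lemma~\ref{lem-div-free-xi} for (a)$\Leftrightarrow$(b) with Remark~\ref{rem-tS-cases} (i.e.\ $\rho_{\sc np}=\beta+i\alpha$, so $\Theta_{\sc np}=\beta$) for (b)$\Leftrightarrow$(c). Your direct trace computation ${\rm div}(\xi)=2\beta$ and your remark that $\alpha,\beta$ are uniquely determined are valid additions that the paper leaves implicit.
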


\noindent {\bf Proof.} The proof follows from Lemma~\ref{lem-div-free-xi} and
Remark~\ref{rem-tS-cases}. $\blacksquare$

Now, we present the following example to illustrate Theorem~\ref{th-tsm-equi}.
\begin{ex-new}
Let
\[
M=\{(x^{1},x^{2},x^{3})\in {\mathbb R}^{3}: x^{1}>0\}
\]
be a $3$-dimensional smooth manifold with standard coordinates
$(x^{1},x^{2},x^{3})$. Define a linearly independent global frame
$(B_{1},B_{2},B_{3})$ on $M$ by
\[
B_{1}=\frac{\partial}{\partial x^{1}} -
\frac{2x^{2}}{x^{1}}\frac{\partial}{\partial x^{3}},\quad
B_{2}=\frac{\partial}{\partial x^{2}}+2\frac{\partial}{\partial x^{3}},\quad
B_{3}=\frac{\partial}{\partial x^{3}}.
\]
Define the associated Newman--Penrose frame
$(\partial,\overline{\partial},\xi)$ by
\[
\partial=\frac{1}{\sqrt{2}}(B_{1}-iB_{2}),\quad
\overline{\partial}=\frac{1}{\sqrt{2}}(B_{1}+iB_{2}),\quad
\xi=B_{3}.
\]
Define the $1$-form $\eta$ on $M$ by
\[
\eta(X)=g(X,B_{3})
\]
for every vector field $X$ on $M$. Let $\varphi$ be the $(1,1)$ tensor field
defined by
\[
\varphi(\partial)=i\partial,\quad
\varphi(\overline{\partial})=-i\overline{\partial},\quad \varphi\xi=0.
\]
Consider the Riemannian metric $g$ defined by
\[
g(\xi,\xi)=g(\partial,\overline{\partial})=1,\qquad
g(\xi,\partial)=g(\partial,\partial)=g(\overline{\partial},\overline{\partial})=0.
\]
Let $\nabla$ denote the Levi-Civita connection of $g$. Then, by a direct
calculation, we obtain
\[
[B_{1},B_{2}]= \frac{2}{x^{1}}B_{3}, \qquad [B_{1},B_{3}]=0, \qquad
[B_{2},B_{3}]=0,
\]
\[
\nabla_{B_{1}}\xi =-\frac{1}{x^{1}}B_{2},\qquad \nabla_{B_{2}}\xi =
\frac{1}{x^{1}} B_{1},\qquad \nabla_{\xi}\xi =0,
\]
\[
\nabla_{\partial}\xi =- \frac{i}{x^{1}}\partial,\qquad
\nabla_{\overline{\partial}}\xi= \frac{i}{x^{1}}\overline{\partial}.
\]
Using the above computation in (\ref{eq-spin-coe-1}), we obtain
\[
\kappa_{\sc np} =-g(\partial,\nabla_{\xi}\xi)=0,\qquad \sigma_{\sc np} =
-g(\partial,\nabla_{\partial}\xi)=0,\qquad \rho_{\sc np} =
g(\partial,\nabla_{\overline{\partial}}\xi)= \frac{i}{x^{1}}.
\]
Thus, $(\varphi,\xi,\eta,g)$ defines a trans-Sasakian structure on $M$ of type
$(\alpha,\beta)=\left(\frac{1}{x^{1}},0\right)$. Consequently, it is a ${\cal
C}_{6}$-manifold. In this case, integral curves of $\xi$ generate a congruence
which is geodesic, shear-free and non-expanding. Moreover, the expansion $\Theta_{\sc np}$ and twist $\omega_{\sc np}$ are given by
\[
\Theta_{\sc np} = 0,\quad \omega_{\sc np} = \frac{1}{x^{1}}.
\]
Furthermore, from (\ref{eq-div-NP}), it follows that ${\rm div}(\xi)=0$, and
hence the vector field $\xi$ is divergence-free.
\end{ex-new}

\begin{cor}
Let $(M,\varphi,\xi,\eta,g)$ be a $3$-dimensional trans-Sasakian manifold of
type $(\alpha,\beta)$ equipped with a Newman--Penrose frame
$(\partial,\overline{\partial},\xi)$. Then the following hold:
\begin{itemize}
\item[{\bf (a)}] If $M$ is a Sasakian manifold, then $\xi$ is divergence-free.

\item[{\bf (b)}] If $M$ is a Kenmotsu manifold, then ${\rm div}(\xi)=2$.

\item[{\bf (c)}] If $M$ is a cosymplectic manifold, then $\xi$ is
divergence-free.
\end{itemize}
\end{cor}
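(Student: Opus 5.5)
The statement is the final corollary: for a Sasakian manifold $\xi$ is divergence-free, for a Kenmotsu manifold ${\rm div}(\xi)=2$, and for a cosymplectic manifold $\xi$ is divergence-free. I will derive all three directly from Lemma~\ref{lem-div-xi-NP}, which gives ${\rm div}(\xi)=2\Theta_{\sc np}=\rho_{\sc np}+\overline{\rho_{\sc np}}$. The idea is to read off $\Theta_{\sc np}$ in each subclass from Remark~\ref{rem-tS-cases}, combined with the trans-Sasakian characterization (\ref{eq-tsm-NP}), namely $\rho_{\sc np}=\beta+i\alpha$. Equivalently, ${\rm div}(\xi)=2\beta$.

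I will then handle the three cases in turn.

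\textbf{(a)} In the Sasakian case, Remark~\ref{rem-tS-cases}(c) gives $\rho_{\sc np}=i$, so $\Theta_{\sc np}=0$ and ${\rm div}(\xi)=0$. This also follows from Theorem~\ref{th-tsm-equi}, since a Sasakian manifold is in particular a ${\cal C}_{6}$-manifold ($\beta=0$).

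\textbf{(b)} In the Kenmotsu case, Remark~\ref{rem-tS-cases}(f) gives $\rho_{\sc np}=1$, so $\Theta_{\sc np}=1$ and ${\rm div}(\xi)=2$.

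\textbf{(c)} In the cosymplectic case, Remark~\ref{rem-tS-cases}(g) gives $\rho_{\sc np}=0$, so ${\rm div}(\xi)=0$.

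There is no real obstacle here. The only point needing care is to confirm that the sign and normalization conventions in Lemma~\ref{lem-div-xi-NP} match $\beta=\Theta_{\sc np}$ from Proposition~\ref{prop:main-equivalence}. As a cross-check, I will verify independently from (\ref{eq-nabla_X(xi)}) that tracing $\nabla_X\xi=-\alpha\varphi X+\beta(X-\eta(X)\xi)$ over an orthonormal frame gives ${\rm div}(\xi)=2\beta$, using ${\rm tr}\,\varphi=0$. This reproduces the values $0$, $2$, $0$ above.
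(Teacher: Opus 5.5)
Your proposal is correct and follows the argument the paper intends; the paper writes out no proof, but places the corollary right after Lemma~\ref{lem-div-xi-NP} and Theorem~\ref{th-tsm-equi}. You combine ${\rm div}(\xi)=2\Theta_{\sc np}=\rho_{\sc np}+\overline{\rho_{\sc np}}$ with the values $\rho_{\sc np}=i,\,1,\,0$ from Remark~\ref{rem-tS-cases}, and your cross-check ${\rm div}(\xi)=2\beta$, obtained by tracing (\ref{eq-nabla_X(xi)}) with ${\rm tr}\,\varphi=0$, independently confirms the values $0$, $2$, $0$.
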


\section{Newman--Penrose computation for trans-Sasakian structures on
homogeneous metrics of type ${\Bbb E}(\kappa,\tau)$\label{Sect-NP-homo-met}}

The spaces ${\Bbb E}(\kappa,\tau)$ form the standard two-parameter family of
simply connected, oriented, homogeneous Riemannian $3$-manifolds with a
$4$-dimensional isometry group. Geometrically, ${\Bbb E}(\kappa,\tau)$ is
characterized by a Riemannian submersion
\[
\pi:{\Bbb E}(\kappa,\tau)\longrightarrow {M}^2(\kappa),
\]
where ${M}^2(\kappa)$ is the simply connected surface of constant curvature
$\kappa$, whose fibres are the integral curves of a unit vertical Killing
vector field $\xi$; the constant $\tau$ is the bundle curvature of this
fibration. The condition $\kappa\neq 4\tau^2$ gives the non-space-form members
of the family. In particular, when $\tau=0$, one obtains the product spaces
${\Bbb S}^2(\kappa)\times{\Bbb R}$ and ${\Bbb H}^2(\kappa)\times{\Bbb R}$,
while for $\tau\neq0$, the family includes important examples such as
Heisenberg group ${\rm Nil}_3$, Berger spheres, and the universal cover of
${\rm PSL}_2({\Bbb R})$ (see \cite[Daniel 2007]{Daniel-07-CMH}). The purpose of
this section is to present, in a completely explicit and verifiable form, the
computation showing how the Newman--Penrose formalism can be used on the
homogeneous metrics of type ${\Bbb E}(\kappa,\tau)$.  The main conclusions are:
\begin{itemize}
\item[\bf (a)] if $\tau\neq 0$ and $\kappa\neq 4\tau^2$, every compatible
trans-Sasakian structure is vertical, hence $\alpha$-Sasakian; \item[\bf (b)]
if $\tau=0$ and $\kappa\neq 0$, every compatible trans-Sasakian structure is
vertical, hence cosymplectic.
\end{itemize}
Consequently, in the genuine non-space-form geometries of type ${\Bbb
E}(\kappa,\tau)$, there are no proper trans-Sasakian structures compatible with
the homogeneous metric. The exceptional cases $\kappa=4\tau^2$ and
$\kappa=\tau=0$ are precisely the space-form/flat cases not ruled out by this
argument.

Let $(M,\varphi,\xi,\eta,g)$ be a trans-Sasakian manifold of type
$(\alpha,\beta)$. Since $\xi$ is a unit vector field, we have
$g(\nabla_X\xi,\xi)=0$ for any vector field $X$ on $M$, hence
\begin{equation}
\nabla_{e_1}\xi = f_{11}e_1+f_{12}e_2, \qquad
\nabla_{e_2}\xi=f_{21}e_1+f_{22}e_2, \qquad \nabla_\xi\xi=f_{31}e_1+f_{32}e_2
\label{eq:ABCDPQ}
\end{equation}
for some real-valued functions $f_{11}$, $f_{12}$, $f_{21}$, $f_{22}$,
$f_{31}$, $f_{32}$. Then, using (\ref{eq-partial-partial-bar}), we get
\[
\nabla_\partial\xi = \frac1{\sqrt2}\bigl((f_{11} - if_{21})e_1+(f_{12} -
if_{22})e_2\bigr), \qquad \nabla_{\overline\partial}\xi
=\frac1{\sqrt2}\bigl((f_{11}+if_{21})e_1+(f_{12}+if_{22})e_2\bigr).
\]
Hence from (\ref{eq-spin-coe-1}), we obtain
\begin{equation}\label{eq:master-np-formulas}
\kappa_{\sc np} = -\frac{1}{\sqrt2}(f_{31}-if_{32}), \quad \sigma_{\sc
np}=-\frac12\bigl((f_{11}-f_{22})-i(f_{21}+f_{12})\bigr), \quad \rho_{\sc
np}=\frac12\bigl((f_{11}+f_{22})+i(f_{21}-f_{12})\bigr).
\end{equation}
In particular, the trans-Sasakian condition (\ref{eq-tsm-NP}) is equivalent to
\begin{equation}\label{eq:trans-condition-ABCD}
f_{31}=f_{32}=0, \qquad f_{11}=f_{22}, \qquad f_{21}=-f_{12}.
\end{equation}
In that case
\begin{equation}\label{eq:rho-alpha-beta-master}
\rho_{\sc np}=f_{11}+if_{21}.
\end{equation}
Thus, with our conventions
\[
\rho_{\sc np}=\beta+i\alpha, \qquad \beta=f_{11}=f_{22}, \qquad
\alpha=f_{21}=-f_{12}.
\]

\subsection{The non-product case $\tau\neq 0$}

Let $(M^{3},g)={\Bbb E}(\kappa,\tau)$, equipped with its standard homogeneous
metric $g$, with $\tau\neq 0$. We use the standard global homogeneous
orthonormal frame $(E_1,E_2,E_3)$, normalized as in \cite[Daniel 2007,
Section~2.1]{Daniel-07-CMH}, where $E_3$ is the canonical vertical unit Killing
vector field associated with the fibration
\[
{\Bbb E}(\kappa,\tau)\longrightarrow {M}^2(\kappa).
\]
In Daniel's notation, the bracket constant is $\sigma=\kappa/(2\tau)$; we write
\begin{equation}
\delta:=\sigma-2\tau=\frac{\kappa-4\tau^2}{2\tau}. \label{eq-delta}
\end{equation}
With this convention, the Lie bracket relations are
\begin{equation}
[E_1,E_2]=2\tau E_3,\qquad [E_2,E_3]=\sigma E_1,\qquad [E_3,E_1]=\sigma E_2,
\label{eq-Lie-bracket-non-pro}
\end{equation}
and the Levi-Civita connection $\nabla$ is expressed by
\begin{align}
\nabla_{E_1}E_1&=0, & \nabla_{E_1}E_2&=\tau E_3, & \nabla_{E_1}E_3&=-\tau E_2, \notag \\
\nabla_{E_2}E_1&=-\tau E_3, & \nabla_{E_2}E_2&=0, & \nabla_{E_2}E_3&=\tau E_1,
\label{eq:nabla-background-detailed} \\
\nabla_{E_3}E_1&=(\sigma-\tau)E_2, &
\nabla_{E_3}E_2&=-(\sigma-\tau)E_1, & \nabla_{E_3}E_3&=0. \notag
\end{align}
These are precisely the canonical-frame formulas \cite[Daniel 2007,
Section~2.1]{Daniel-07-CMH}.

We now let $\xi$ denote the structure vector field of a hypothetical
trans-Sasakian structure on $M$. We shall show that, in the genuine non-product
case, $\xi$ is forced to be vertical, namely $\xi=\pm E_3$. Here ``vertical''
means tangent to the fibres of the above fibration. It suffices to analyse the
open set on which $\xi$ has a nonzero horizontal component and to prove that
this open set is empty. Let
\[
U=\{p\in M:\xi(p)\notin\{\pm E_3(p)\}\}.
\]
On $U$, the horizontal component of $\xi$ is nonzero. Thus, if
\[
\xi = \xi^{\rm{hor}}+g(\xi,E_3)E_3, \qquad \xi^{\rm{hor}}\in
\operatorname{span}\{E_1,E_2\},
\]
we define the unit horizontal vector field $T:=\frac{\xi^{\rm{hor}}}{\Vert
\xi^{\rm{hor}}\Vert}$. Since $\xi$ is a unit vector field, there is a local
function $u$ such that $g(\xi,E_3)=\cos u$, $\Vert\xi^{\mathrm{hor}}\Vert=\sin
u$. Moreover, since $T$ is a unit vector field in the horizontal plane, there
is a local function $v$ such that
\[
T=\cos v\,E_1+\sin v\,E_2.
\]
Hence, locally on $U$, we can write
\begin{equation}
\xi =\sin u\,T+\cos u\,E_3, \label{eq:xi-angles-detailed}
\end{equation}
\begin{equation}
T=\cos v\,E_1+\sin v\,E_2. \label{eq:T-angles-detailed}
\end{equation}

To compute the Newman--Penrose coefficients of the candidate structure vector
field $\xi$, we need an orthonormal frame whose third vector field is $\xi$. We
choose this frame so as to remain adapted to the horizontal--vertical splitting
of ${\Bbb E}(\kappa,\tau)$, i.e. where $e_1$ is the horizontal unit vector
field orthogonal to the horizontal direction $T$, and $e_2$ is the unit vector
field in the $(T,E_3)$-plane orthogonal to $\xi$.
\begin{equation}
e_1 =-\sin v\,E_1 + \cos v\,E_2, \label{eq:adapted-frame-detailed(e1)}
\end{equation}
\begin{equation}
e_2=-\cos u\,T + \sin u\,E_3. \label{eq:adapted-frame-detailed(e2)}
\end{equation}
Note that the inverse relations are
\begin{equation}
T=\sin u\,\xi-\cos u\,e_2, \label{eq:inverse-frame-detailed(T)}
\end{equation}
\begin{equation}
E_3=\cos u\,\xi+\sin u\,e_2. \label{eq:inverse-frame-detailed(E3)}
\end{equation}
In particular,
\begin{equation}
\cos u\,T-\sin u\,E_3=-e_2. \label{eq:key-sign-detailed}
\end{equation}
We now compute the covariant derivatives for the adapted frame.

\begin{lem}\label{lem:adapted-covariant-derivatives}
With the notation introduced above, define
\begin{equation}
C(X):= {\rm d}v(X)+(\sigma-\tau)g(X,E_3) \label{eq:C(X)}
\end{equation}
for any vector field $X$ on $M$. Then the covariant derivatives of the
horizontal vector fields $T$ and $e_{1}$ are given by
\begin{align}
\nabla_XT &=C(X)e_1-\tau g(X,e_1) E_3, \label{eq:nabla-T-detailed}\\
\nabla_Xe_1 &=-C(X)T+\tau g(X,T) E_3. \label{eq:nabla-e1-detailed}
\end{align}
Moreover,
\begin{equation}
\nabla_{e_1}E_3 =\tau T, \qquad \nabla_{e_2}E_3 =\tau\cos u\,e_1, \qquad
\nabla_{\xi}E_3 =-\tau\sin u\,e_1, \label{eq:nabla-E3-special}
\end{equation}
and the covariant derivatives of $\xi$ in the adapted frame $(e_1,e_2,\xi)$ are
\begin{align}
\nabla_{e_1}\xi &=\sin u\,{\rm d}v(e_1)e_1-\bigl({\rm d}u(e_1)+\tau\bigr)e_2,
\label{eq:nabla-e1-xi-detailed}\\
\nabla_{e_2}\xi &=\bigl(\sin u\,{\rm d}v(e_2)+\tau+\delta\sin^2u\bigr)e_1-{\rm
d}u(e_2)e_2, \label{eq:nabla-e2-xi-detailed}\\
\nabla_{\xi}\xi &=\sin u\bigl({\rm d}v(\xi)+\delta\cos u\bigr)e_1- {\rm
d}u(\xi)e_2. \label{eq:nabla-xi-xi-detailed}
\end{align}
\end{lem}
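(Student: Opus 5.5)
The plan is a direct frame computation from the background connection (\ref{eq:nabla-background-detailed}), organised in three steps. First, the derivatives of $T$ and $e_1$. Second, the three identities for $E_3$. Third, assembling $\nabla\xi$ from (\ref{eq:xi-angles-detailed}) and re-expressing everything in the adapted frame $(e_1,e_2,\xi)$ via (\ref{eq:inverse-frame-detailed(T)})--(\ref{eq:key-sign-detailed}).

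\textbf{Step 1: derivatives of $T$ and $e_1$.} Write $X=X^1E_1+X^2E_2+X^3E_3$ and differentiate $T=\cos v\,E_1+\sin v\,E_2$. The derivative of the coefficients gives ${\rm d}v(X)(-\sin v\,E_1+\cos v\,E_2)={\rm d}v(X)e_1$. For the frame terms, (\ref{eq:nabla-background-detailed}) gives
\[
\nabla_XE_1=-\tau X^2E_3+(\sigma-\tau)X^3E_2,\qquad \nabla_XE_2=\tau X^1E_3-(\sigma-\tau)X^3E_1 .
\]
Combining these, the vertical part of $\nabla_XT$ is $\tau(\sin v\,X^1-\cos v\,X^2)E_3=-\tau g(X,e_1)E_3$. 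The horizontal part is $(\sigma-\tau)X^3e_1$. This yields (\ref{eq:nabla-T-detailed}) with $C(X)$ as in (\ref{eq:C(X)}). The formula (\ref{eq:nabla-e1-detailed}) follows in the same way, since $e_1$ is $T$ rotated by $+\pi/2$ in the horizontal plane.

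\textbf{Step 2: the derivatives of $E_3$.} From $\nabla_XE_3=\tau(X^2E_1-X^1E_2)$, a horizontal $X$ is rotated by $-\pi/2$ and scaled by $\tau$. This gives $\nabla_XE_3=\tau\bigl(g(X,e_1)T-g(X,T)e_1\bigr)$. Substituting $X=e_1,e_2,\xi$ and using $g(e_2,T)=-\cos u$, $g(\xi,T)=\sin u$ gives (\ref{eq:nabla-E3-special}).

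\textbf{Step 3: assembling $\nabla\xi$.} Write
\[
\nabla_X\xi={\rm d}u(X)(\cos u\,T-\sin u\,E_3)+\sin u\,\nabla_XT+\cos u\,\nabla_XE_3 .
\]
By (\ref{eq:key-sign-detailed}), the first term equals $-{\rm d}u(X)e_2$. Now insert Steps 1 and 2 and rewrite $E_3$ through (\ref{eq:inverse-frame-detailed(E3)}).
\begin{itemize}
\item For $X=e_1$: here $C(e_1)={\rm d}v(e_1)$. The contributions $-\tau\sin u\,E_3$ and $\tau\cos u\,T$ combine to $-\tau e_2$, by (\ref{eq:key-sign-detailed}).
\item For $X=e_2$: here $C(e_2)={\rm d}v(e_2)+(\sigma-\tau)\sin u$. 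The $e_1$-coefficient becomes $\sin u\,{\rm d}v(e_2)+(\sigma-\tau)\sin^2u+\tau\cos^2u=\sin u\,{\rm d}v(e_2)+\tau+\delta\sin^2u$, using $\delta=\sigma-2\tau$.
\item For $X=\xi$: here $C(\xi)={\rm d}v(\xi)+(\sigma-\tau)\cos u$. Together with $-\tau\sin u\cos u\,e_1$ from $\cos u\,\nabla_\xi E_3$, this gives the coefficient $\sin u({\rm d}v(\xi)+\delta\cos u)$.
\end{itemize}

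The main obstacle is purely bookkeeping of signs. One must keep the orientation of $e_1$ relative to $T$ consistent, and get the sign of the rotation in $\nabla_XE_3$ right. It is also easy to drop the $(\sigma-\tau)g(X,E_3)$ term when $X=e_2$ or $X=\xi$. At each stage I will cross-check that the results are orthogonal to $\xi$, as they must be since $\xi$ has unit length.
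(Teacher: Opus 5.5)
Your proposal is correct and follows essentially the same route as the paper. Both arguments expand $X$ in the background frame to get $\nabla_XT$ and $\nabla_Xe_1$, and both obtain the $\nabla E_3$ identities: you use the single formula $\nabla_XE_3=\tau\bigl(g(X,e_1)T-g(X,T)e_1\bigr)$, while the paper uses $\nabla_TE_3=-\tau e_1$, $\nabla_{e_1}E_3=\tau T$ and linearity. Both then differentiate $\xi=\sin u\,T+\cos u\,E_3$ and use $\cos u\,T-\sin u\,E_3=-e_2$; your coefficient checks, including $(\sigma-\tau)\sin^2u+\tau\cos^2u=\tau+\delta\sin^2u$ and the $\xi$-case, are all right.
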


\noindent{\bf Proof.}
Let
\[
X=x_1E_1+x_2E_2+x_3E_3.
\]
From the connection formulas (\ref{eq:nabla-background-detailed}), we have
\begin{equation}
\nabla_XE_1=-\tau x_2E_3+(\sigma-\tau)x_3E_2, \qquad \nabla_XE_2=\tau
x_1E_3-(\sigma-\tau)x_3E_1. \label{eq-nabla(X,E1)-1}
\end{equation}
Using (\ref{eq:T-angles-detailed}), (\ref{eq:adapted-frame-detailed(e1)}) and
(\ref{eq-nabla(X,E1)-1}), we compute
\begin{equation}
\nabla_XT = \bigl({\rm d}v(X)+(\sigma-\tau)x_3\bigr)e_1 +\tau(x_1\sin v-x_2\cos
v)E_3.\label{eq-nabla(X,E1)-2}
\end{equation}
But
\[
x_3=g(X,E_3), \qquad x_1\sin v-x_2\cos v = -g(X,e_1).
\]
Therefore, from (\ref{eq:C(X)}) and (\ref{eq-nabla(X,E1)-2}), we get
(\ref{eq:nabla-T-detailed}).

Similarly,
\begin{equation}
\nabla_Xe_1 =-\bigl({\rm d}v(X)+(\sigma-\tau)x_3\bigr)T +\tau(x_1\cos v+x_2\sin
v)E_3. \label{eq-nabla(X,E1)-4}
\end{equation}
Since
\[
x_1\cos v+x_2\sin v = g(X,T),
\]
therefore (\ref{eq-nabla(X,E1)-4}) becomes (\ref{eq:nabla-e1-detailed}). Next,
using (\ref{eq:nabla-background-detailed}), we get
\begin{equation}
\nabla_TE_3 = -\tau e_1, \qquad \nabla_{e_1}E_3 = \tau T.
\label{eq-nabla(X,E1)-6}
\end{equation}
From (\ref{eq:xi-angles-detailed}) and (\ref{eq:adapted-frame-detailed(e2)}),
we have
\begin{equation}
\nabla_{e_2}E_3 = -\cos u\,\nabla_TE_3+\sin u\,\nabla_{E_3}E_3 = \tau\cos
u\,e_1, \label{eq-nabla(X,E1)-7}
\end{equation}
\begin{equation}
\nabla_\xi E_3 = \sin u\,\nabla_TE_3+\cos u\,\nabla_{E_3}E_3 =-\tau\sin u\,e_1,
\label{eq-nabla(X,E1)-8}
\end{equation}
which proves (\ref{eq:nabla-E3-special}). Differentiating
(\ref{eq:xi-angles-detailed}) along an arbitrary vector field $X$, we obtain
\begin{equation}
\nabla_X\xi = {\rm d}u(X)(\cos u\,T-\sin u\,E_3) + \sin u\,C(X)e_1-\tau\sin u
g(X,e_1) E_3+\cos u\,\nabla_XE_3. \label{eq-nabla(X,E1)-9}
\end{equation}
Using (\ref{eq:key-sign-detailed}) in (\ref{eq-nabla(X,E1)-9}), we obtain
\begin{equation}
\nabla_X\xi = -{\rm d}u(X)e_2+\sin u\,C(X)e_1 - \tau\sin u g(X,e_1) E_3 +\cos
u\,\nabla_XE_3. \label{eq:nabla-xi-general-proof}
\end{equation}
Substituting $X=e_1,e_2,\xi$ into (\ref{eq:nabla-xi-general-proof}), and using
(\ref{eq-delta}), we respectively obtain (\ref{eq:nabla-e1-xi-detailed}),
(\ref{eq:nabla-e2-xi-detailed}), and (\ref{eq:nabla-xi-xi-detailed}).
$\blacksquare$

Now comparing \eqref{eq:nabla-e1-xi-detailed}, \eqref{eq:nabla-e2-xi-detailed},
and \eqref{eq:nabla-xi-xi-detailed} with \eqref{eq:ABCDPQ}, we have
\[
f_{11}=\sin u\,{\rm d}v(e_1), \qquad f_{12}=-({\rm d}u(e_1)+\tau),
\]
\[
f_{21}=\sin u\,{\rm d}v(e_2)+\tau+\delta\sin^2u, \qquad f_{22}=-{\rm d}u(e_2),
\]
\[
f_{31}=\sin u\bigl({\rm d}v(\xi)+\delta\cos u\bigr), \qquad f_{32}=-{\rm
d}u(\xi).
\]
Substituting the above values into \eqref{eq:master-np-formulas} gives
\begin{align}
\kappa_{\sc np} & = -\frac1{\sqrt2}\Bigl(\sin u\bigl({\rm d}v(\xi)
+\delta\cos u\bigr)+i{\rm d}u(\xi)\Bigr), \label{eq:kappa-detailed}\\
\sigma_{\sc np}&=-\frac12\Bigl(\sin u\,{\rm d}v(e_1)
+{\rm d}u(e_2)-i\bigl(\sin u\,{\rm d}v(e_2)-{\rm d}u(e_1)
+\delta\sin^2u\bigr)\Bigr), \label{eq:sigma-detailed}\\
\rho_{\sc np}&=\frac12\Bigl(\sin u\,{\rm d}v(e_1)
-{\rm d}u(e_2)+i\bigl(\sin u\,{\rm d}v(e_2)+{\rm d}u(e_1)
+2\tau+\delta\sin^2u\bigr)\Bigr).\label{eq:rho-detailed}
\end{align}
Up to this point the computation is entirely local on $U$. We have only fixed a
local unit vector field $\xi$, equivalently a local almost contact metric
structure determined by the oriented orthonormal frame $(e_1,e_2,\xi)$. We will
now impose the trans-Sasakian condition. Since the structure determined by
$(e_1,e_2,\xi)$ is trans-Sasakian if and only if (\ref{eq-tsm-NP}) is true. By
\eqref{eq:kappa-detailed}, \eqref{eq:sigma-detailed}, and
(\ref{eq:rho-detailed}), (\ref{eq-tsm-NP}) is equivalent to
\begin{eqnarray}
&{\rm d}u(\xi)=0,& \label{eq:sys1-detailed}\\
&\sin u\bigl({\rm d}v(\xi)+\delta\cos u\bigr)=0,&\label{eq:sys2-detailed}\\
&{\rm d}u(e_2)+\sin u\,{\rm d}v(e_1)=0,&\label{eq:sys3-detailed}\\
&{\rm d}u(e_1)-\sin u\,{\rm d}v(e_2)-\delta\sin^2u=0,&
\label{eq:sys4-detailed}\\ &\beta = \frac12\Bigl(\sin u\,{\rm d}v(e_1)-{\rm
d}u(e_2)\Bigr), \label{eq:sys4a-detailed}\\ &\alpha = \frac12\bigl(\sin u\,{\rm
d}v(e_2)+{\rm d}u(e_1) +2\tau+\delta\sin^2u\bigr).\label{eq:sys5-detailed}
\end{eqnarray}
Now note that since the horizontal component of $\xi=\sin u\,T+\cos u\,E_3$ is
precisely $\sin u\,T$, the open set on which $\xi$ is not vertical is
\begin{equation}
U=\{\sin u\neq0\}=\{p\in M:\xi(p)\notin\{\pm E_3(p)\}\}. \label{eq-U-sinu}
\end{equation}
On $U$, these equations give
\begin{equation}\label{eq:v-derivatives-detailed}
{\rm d}v(\xi) = -\delta\cos u, \qquad {\rm d}v(e_1)=\frac{\beta}{\sin u},
\qquad {\rm d}v(e_2)=\frac{\alpha-\tau-\delta\sin^2u}{\sin u},
\end{equation}
where
\begin{equation}
\beta = -{\rm d}u(e_2)=\sin u\,{\rm d}v(e_1), \qquad \alpha={\rm
d}u(e_1)+\tau=\sin u\,{\rm
d}v(e_2)+\tau+\delta\sin^2u.\label{eq:alpha-beta-detailed}
\end{equation}
We have therefore reduced the existence of a nonvertical compatible
trans-Sasakian structure vector field to a very explicit local first-order
system on the nonvertical locus $U$.

\begin{lem}\label{lem:bracket-alpha-beta}
Assume that the trans-Sasakian structure equation {\rm (\ref{eq-tsm-NP})} holds
on $U$. Then
\begin{align}
[e_1,\xi]&=\beta e_1-(\alpha+\tau)e_2,\label{eq:bracket-e1xi-detailed}\\
[e_2,\xi]&=(\alpha+\tau)e_1+\beta e_2.\label{eq:bracket-e2xi-detailed}
\end{align}
Moreover,
\begin{align}
\xi(\alpha)&=-2\alpha\beta,\label{eq:xi-alpha-detailed}\\
\xi(\beta)&=\alpha^2-\beta^2-\tau^2.\label{eq:xi-beta-detailed}
\end{align}
\end{lem}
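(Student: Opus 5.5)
The bracket formulas will follow from the torsion-free identity $[X,\xi]=\nabla_X\xi-\nabla_\xi X$. The two derivative identities will then follow by applying these brackets to the angle function $u$, using the expressions \eqref{eq:alpha-beta-detailed} for $\alpha$ and $\beta$ in terms of ${\rm d}u$.

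\emph{Step 1: $\nabla_\xi e_1$ and $\nabla_\xi e_2$.} On $U$ the trans-Sasakian condition \eqref{eq:trans-condition-ABCD} gives
\[
\nabla_{e_1}\xi=\beta e_1-\alpha e_2,\qquad \nabla_{e_2}\xi=\alpha e_1+\beta e_2,\qquad \nabla_\xi\xi=0 .
\]
I evaluate \eqref{eq:nabla-e1-detailed} at $X=\xi$. This gives $\nabla_\xi e_1=-C(\xi)T+\tau g(\xi,T)E_3$, where $g(\xi,T)=\sin u$. By \eqref{eq:v-derivatives-detailed} and \eqref{eq-delta},
\[
C(\xi)={\rm d}v(\xi)+(\sigma-\tau)\cos u=(-\delta+\sigma-\tau)\cos u=\tau\cos u .
\]
Hence $\nabla_\xi e_1=\tau(-\cos u\,T+\sin u\,E_3)=\tau e_2$ by \eqref{eq:adapted-frame-detailed(e2)}.

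For $\nabla_\xi e_2$, I use orthonormality of the frame. First, $g(\nabla_\xi e_2,\xi)=-g(e_2,\nabla_\xi\xi)=0$. Second, $g(\nabla_\xi e_2,e_1)=-g(e_2,\nabla_\xi e_1)=-\tau$. Third, $g(\nabla_\xi e_2,e_2)=0$. Therefore $\nabla_\xi e_2=-\tau e_1$.

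Subtracting these from $\nabla_{e_1}\xi$ and $\nabla_{e_2}\xi$ gives \eqref{eq:bracket-e1xi-detailed} and \eqref{eq:bracket-e2xi-detailed}.

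\emph{Step 2: $\xi(\beta)$.} By \eqref{eq:alpha-beta-detailed}, $\beta=-e_2(u)$, and by \eqref{eq:sys1-detailed}, $\xi(u)=0$. Writing $\xi(e_2(u))=e_2(\xi(u))+[\xi,e_2](u)$, I get
\[
\xi(\beta)=[e_2,\xi](u)=(\alpha+\tau)e_1(u)+\beta e_2(u).
\]
Now I substitute $e_1(u)=\alpha-\tau$ and $e_2(u)=-\beta$. This yields $(\alpha+\tau)(\alpha-\tau)-\beta^2=\alpha^2-\beta^2-\tau^2$, which is \eqref{eq:xi-beta-detailed}.

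\emph{Step 3: $\xi(\alpha)$.} Similarly, $\alpha=e_1(u)+\tau$ with $\tau$ constant, so
\[
\xi(\alpha)=[\xi,e_1](u)=-\beta e_1(u)+(\alpha+\tau)e_2(u)=-\beta(\alpha-\tau)-(\alpha+\tau)\beta=-2\alpha\beta .
\]
This is \eqref{eq:xi-alpha-detailed}. It is also consistent with the general identity \eqref{eq-condition-alpha-beta-xi}, which could be cited instead.

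The only delicate point is the sign and normalization bookkeeping in Step 1, specifically $C(\xi)=\tau\cos u$. It relies on the relation $\sigma-\tau-\delta=\tau$ and on the sign convention \eqref{eq:key-sign-detailed} for $e_2$; I will check these carefully. Everything else is a direct substitution.
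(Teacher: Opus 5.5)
Your proposal is correct and follows essentially the same route as the paper. You get $\nabla_\xi e_1=\tau e_2$ from $C(\xi)=\tau\cos u$, deduce $\nabla_\xi e_2=-\tau e_1$ by orthonormality, and read off the brackets from torsion-freeness. You then evaluate $[e_1,\xi](u)$ and $[e_2,\xi](u)$ in two ways, using $\xi(u)=0$ on $U$ together with $\alpha=e_1(u)+\tau$ and $\beta=-e_2(u)$. The sign checks you flagged all go through.
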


\noindent{\bf Proof.}
Putting $X=\xi$ in \eqref{eq:nabla-e1-detailed}, we get
\begin{equation}
\nabla_\xi e_1=-C(\xi)T+\tau g(\xi,T) E_3. \label{eq-nabla-xi-e1-1}
\end{equation}
Using (\ref{eq-delta}), (\ref{eq:xi-angles-detailed}), (\ref{eq:C(X)}), and
(\ref{eq:v-derivatives-detailed}) in (\ref{eq-nabla-xi-e1-1}), we obtain
\begin{equation}
\nabla_\xi e_1 = -\tau\cos u\,T+\tau\sin u\,E_3  =\tau e_2. \label{eq-nabla-xi-e1-2}
\end{equation}
Now, from (\ref{eq:nabla-e1-xi-detailed}) and (\ref{eq:alpha-beta-detailed}),
we obtain
\begin{equation}
\nabla_{e_1}\xi = \beta e_1 -\alpha e_2. \label{eq-nabla-xi-e1-2a}
\end{equation}
From (\ref{eq-nabla-xi-e1-2}) and (\ref{eq-nabla-xi-e1-2a}), we get
(\ref{eq:bracket-e1xi-detailed}). Now, from \eqref{eq:nabla-e2-xi-detailed} and
\eqref{eq:alpha-beta-detailed}, we have
\begin{equation}
\nabla_{e_2}\xi = \alpha e_1+\beta e_2. \label{eq-nabla-xi-e1-3}
\end{equation}
By \eqref{eq:nabla-xi-xi-detailed}, together with \eqref{eq:sys1-detailed} and
\eqref{eq:sys2-detailed}, we have
\begin{equation}
\nabla_\xi\xi = 0. \label{eq-nabla-xi-e1-4}
\end{equation}
Using (\ref{eq-nabla-xi-e1-2}) and metric compatibility, we get
\begin{equation}
g(\nabla_\xi e_2,e_1) = - g(e_2,\nabla_\xi e_1) = -\tau,
\label{eq-nabla-xi-e1-5}
\end{equation}
\begin{equation}
g(\nabla_\xi e_2,e_2) = 0, \label{eq-nabla-xi-e1-6}
\end{equation}
and
\begin{equation}
g(\nabla_\xi e_2,\xi) = 0. \label{eq-nabla-xi-e1-7}
\end{equation}
Hence
\begin{equation}
\nabla_\xi e_2=-\tau e_1. \label{eq-nabla-xi-e1-8}
\end{equation}
Using (\ref{eq-nabla-xi-e1-3}) and (\ref{eq-nabla-xi-e1-8}), we obtain
(\ref{eq:bracket-e2xi-detailed}). Using (\ref{eq:sys1-detailed}), we have
\begin{equation}
[e_1,\xi](u)=e_1({\rm d}u(\xi))-\xi({\rm d}u(e_1))=-\xi({\rm d}u(e_1)).
\label{eq-nabla-xi-e1-9}
\end{equation}
Using (\ref{eq:alpha-beta-detailed}) in (\ref{eq-nabla-xi-e1-9}), we get
\begin{equation}
[e_1,\xi](u)=-\xi(\alpha). \label{eq-nabla-xi-e1-10}
\end{equation}
On the other hand, using \eqref{eq:bracket-e1xi-detailed} and
(\ref{eq:alpha-beta-detailed}), we obtain
\begin{equation}
[e_1,\xi](u) = \beta {\rm d}u(e_1)-(\alpha+\tau){\rm d}u(e_2) =
\beta(\alpha-\tau) + (\alpha+\tau)\beta = 2\alpha\beta.
\label{eq-nabla-xi-e1-11}
\end{equation}
Thus from (\ref{eq-nabla-xi-e1-10}) and (\ref{eq-nabla-xi-e1-11}), we get
(\ref{eq:xi-alpha-detailed}). Similarly,
\begin{equation}
[e_2,\xi](u) = e_2({\rm d}u(\xi))-\xi({\rm d}u(e_2))=-\xi({\rm
d}u(e_2))=\xi(\beta). \label{eq-nabla-xi-e1-12}
\end{equation}
On the other hand, using \eqref{eq:bracket-e2xi-detailed}, we obtain
\begin{equation}
[e_2,\xi](u) = (\alpha+\tau){\rm d}u(e_1)+\beta {\rm d}u(e_2) =
(\alpha+\tau)(\alpha-\tau)-\beta^2 = \alpha^2-\beta^2-\tau^2.
\label{eq-nabla-xi-e1-13}
\end{equation}
Thus from (\ref{eq-nabla-xi-e1-12}) and (\ref{eq-nabla-xi-e1-13}), we get
(\ref{eq:xi-beta-detailed}). $\blacksquare$

\begin{thm}\label{thm:nonproduct-final}
Let $(M^{3},g)={\Bbb E}(\kappa,\tau)$ with $\tau\neq0$ and $\kappa\neq4\tau^2$.
Then every trans-Sasakian structure compatible with $g$ is vertical. More
precisely, on each connected component,
\[
\xi=\pm E_3.
\]
Hence no proper trans-Sasakian structures are compatible with the homogeneous
metric in the genuine non-product, non-space-form cases of type ${\Bbb
E}(\kappa,\tau)$; the only compatible structures are the canonical vertical
$\alpha$-Sasakian structures.
\end{thm}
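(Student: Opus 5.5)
The plan is to show that the nonvertical locus $U=\{\sin u\neq0\}$ is empty. I will do this by comparing two independent expressions for $S(\xi,\xi)$ on $U$. The first comes from the trans-Sasakian structure via the Newman--Penrose (Sachs) equations and Lemma~\ref{lem:bracket-alpha-beta}. The second comes from the known Ricci tensor of the homogeneous metric of ${\Bbb E}(\kappa,\tau)$.

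\emph{Step 1 (Ricci curvature from the structure).} On $U$ the trans-Sasakian condition gives $\kappa_{\sc np}=\sigma_{\sc np}=0$ and $\rho_{\sc np}=\beta+i\alpha$. Hence (\ref{eq-tsm-gen-Sach-1}) reads $S(\xi,\xi)=-2\xi(\rho_{\sc np})-2\rho_{\sc np}^2$. Since $S(\xi,\xi)$ is real, taking real parts gives
\[
S(\xi,\xi)=2\bigl(\alpha^2-\beta^2-\xi(\beta)\bigr).
\]
Substituting $\xi(\beta)=\alpha^2-\beta^2-\tau^2$ from (\ref{eq:xi-beta-detailed}) yields $S(\xi,\xi)=2\tau^2$ on $U$.

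\emph{Step 2 (Ricci curvature from the background metric).} From the connection formulas (\ref{eq:nabla-background-detailed}) I will compute, or quote from Daniel, the Ricci tensor of ${\Bbb E}(\kappa,\tau)$ in the frame $(E_1,E_2,E_3)$. It is diagonal, with
\[
S(E_1,E_1)=S(E_2,E_2)=\kappa-2\tau^2,\qquad S(E_3,E_3)=2\tau^2,
\]
and all mixed terms vanish. Using $\xi=\sin u\,T+\cos u\,E_3$ with $T$ horizontal and unit, this gives
\[
S(\xi,\xi)=(\kappa-2\tau^2)\sin^2u+2\tau^2\cos^2u=2\tau^2+(\kappa-4\tau^2)\sin^2u .
\]

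\emph{Step 3 (conclusion).} Equating the two expressions gives $(\kappa-4\tau^2)\sin^2u=0$ on $U$. Since $\kappa\neq4\tau^2$, this forces $\sin u=0$ on $U$, contradicting the definition of $U$. Hence $U=\emptyset$, so $\xi(p)\in\{\pm E_3(p)\}$ at every point. Because $\xi$ is continuous and the two sets where $\xi=E_3$ and $\xi=-E_3$ are disjoint and closed, $\xi$ equals a constant choice of $E_3$ or $-E_3$ on each connected component. Finally, for $\xi=\pm E_3$ the formulas $\nabla_{E_1}E_3=-\tau E_2$ and $\nabla_{E_2}E_3=\tau E_1$ give $\beta=0$ and $\alpha=\pm\tau$, a nonzero constant. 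So the structure is the canonical vertical $\alpha$-Sasakian one, and no proper structure exists.

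The main obstacle is Step 2: getting the Ricci components and signs right in Daniel's normalization, where $\sigma=\kappa/(2\tau)$ and $\delta=\sigma-2\tau$. I would verify them by computing the sectional curvatures directly from (\ref{eq:nabla-background-detailed}), namely $K(E_1,E_2)=\kappa-3\tau^2$ and $K(E_i,E_3)=\tau^2$. One must also check that Lemma~\ref{lem:bracket-alpha-beta} holds on all of $U$, including the local choice of $u,v$. This is harmless, since the argument is pointwise on $U$.
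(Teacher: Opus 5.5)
Your proof is correct, but it reaches the obstruction $(\kappa-4\tau^2)\sin^2u=0$ by a different mechanism from the paper's.

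The paper never uses the curvature of the background metric. It treats the formulas (\ref{eq:v-derivatives-detailed}) for ${\rm d}v$ as a first-order system and imposes its integrability. It computes $[e_1,\xi](v)$ once from the bracket (\ref{eq:bracket-e1xi-detailed}) and once as $e_1({\rm d}v(\xi))-\xi({\rm d}v(e_1))$, which yields $2\tau\delta\sin^2u=0$.

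You instead use the Sachs equation (\ref{eq-tsm-gen-Sach-1}) as a Raychaudhuri-type identity. You feed in only the evolution law $\xi(\beta)=\alpha^2-\beta^2-\tau^2$ from Lemma~\ref{lem:bracket-alpha-beta} to get $S(\xi,\xi)=2\tau^2$ on $U$. You then compare with the Ricci tensor $S=(\kappa-2\tau^2)g+(4\tau^2-\kappa)\,\theta^3\otimes\theta^3$ of the homogeneous metric, where $\theta^3$ is dual to $E_3$. Your sectional curvatures $\kappa-3\tau^2$ and $\tau^2$ are right for the connection (\ref{eq:nabla-background-detailed}). The signs are also consistent with the vertical case, where $\alpha=\tau$ and $\beta=0$ give $S(E_3,E_3)=2\tau^2$.

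What each route buys:
\begin{itemize}
\item The paper's route stays entirely at the level of connection coefficients and needs no curvature computation.
\item Yours costs a Ricci computation but makes the obstruction conceptual. The factor $\kappa-4\tau^2$ is exactly the anisotropy of the Ricci tensor, which explains why the space-form case escapes.
\item You are also more explicit than the paper in checking that $\xi=\pm E_3$ really gives $\beta=0$ and $\alpha=\pm\tau$.
\end{itemize}

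Your curvature idea can be pushed further. The first Sachs equation gives $S(\partial,\partial)=0$ for every trans-Sasakian structure. Since $g(E_3,\partial)=-\tfrac{i}{\sqrt2}\sin u$, the background Ricci tensor gives $S(\partial,\partial)=(4\tau^2-\kappa)\,g(E_3,\partial)^2=\tfrac12(\kappa-4\tau^2)\sin^2u$. This yields the same conclusion without Lemma~\ref{lem:bracket-alpha-beta} or the angle $v$ at all, and it is exactly the argument the paper uses for Sol.
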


\noindent{\bf Proof.} It is enough to show that the nonvertical locus $U$
defined by (\ref{eq-U-sinu}) is empty. From (\ref{eq:v-derivatives-detailed}),
(\ref{eq:alpha-beta-detailed}) and (\ref{eq:bracket-e1xi-detailed}), we obtain
\begin{equation}
[e_1,\xi](v) = \frac{\beta^2-(\alpha+\tau) (\alpha-\tau-\delta\sin^2u)}{\sin
u}. \label{eq-[e1,xi]v-th-1}
\end{equation}
Using (\ref{eq:sys1-detailed}), (\ref{eq:v-derivatives-detailed}), and
\eqref{eq:xi-beta-detailed}, we get
\begin{equation}
e_1({\rm d}v(\xi)) =e_1(-\delta\cos u) = \delta\sin u\,(\alpha-\tau),
\label{eq-[e1,xi]v-th-2}
\end{equation}
\begin{equation}
\xi({\rm d}v(e_1)) = \xi\left(\frac{\beta}{\sin u}\right)
=\frac{\alpha^2-\beta^2-\tau^2}{\sin u}. \label{eq-[e1,xi]v-th-3}
\end{equation}
Hence from (\ref{eq-[e1,xi]v-th-2}) and (\ref{eq-[e1,xi]v-th-3}), we have
\begin{equation}
[e_1,\xi](v) = \delta\sin u\,(\alpha-\tau) -\frac{\alpha^2-\beta^2-\tau^2}{\sin
u}. \label{eq-[e1,xi]v-th-4}
\end{equation}
Comparing (\ref{eq-[e1,xi]v-th-1}) and (\ref{eq-[e1,xi]v-th-4}), we get
\begin{equation}\label{eq:nonproduct-contradiction}
2\tau\delta\sin^2u=0.
\end{equation}
Since
\[
\tau\delta = \tau(\sigma-2\tau) = \frac{\kappa-4\tau^2}{2},
\]
our assumptions $\tau\neq0$ and $\kappa\neq4\tau^2$ imply $\tau\delta\neq0$.
Therefore \eqref{eq:nonproduct-contradiction} forces $\sin u=0$ on $U$,
contradicting the definition of $U$. Hence $U=\varnothing$, and so $\xi$ is
vertical everywhere. Since $\xi$ is a unit vector field, this means
\[
\xi=\pm E_3
\]
on each connected component. $\blacksquare$

\begin{rem}
The condition $\kappa\neq4\tau^2$ is the usual non-space-form condition for the
genuine ${\Bbb E}(\kappa,\tau)$ geometries. When $\kappa=4\tau^2$, the argument
above degenerates, as expected in the space-form case.
\end{rem}

\subsection{The product case $\tau=0$}
Let
\[
M^3= M^2_{\kappa}\times {\Bbb R}
\]
with the product metric, where $M^2_{\kappa}$ is the simply connected surface
of constant curvature $\kappa$. Choose a local positively oriented orthonormal
frame $(E_1,E_2)$ on $M^2_{\kappa}$ and let $E_3=\partial_t$.  Write
\begin{equation}
\nabla_XE_1=\omega(X)E_2, \qquad \nabla_XE_2=-\omega(X)E_1,\qquad
\nabla_XE_3=0, \label{eq-nabla(X,E1,E2,E3)-pro}
\end{equation}
where $\omega$ is the Levi-Civita connection $1$-form of $M^2_{\kappa}$. Then
\begin{equation}
[E_1,E_2]=-\omega(E_1)E_1-\omega(E_2)E_2, \qquad [E_2,E_3]=0,\qquad
[E_3,E_1]=0,\label{[X,E1,E2,E3]-pro}
\end{equation}
and the curvature of the surface is encoded by
\begin{equation}
d\omega=-\kappa\,\theta^1\wedge\theta^2, \label{eq-kappa-theta12-pro}
\end{equation}
where $(\theta^1,\theta^2)$ is the dual co-frame to the orthonormal frame
$(E_1,E_2)$. Let now $\xi$ denote the structure vector field of some compatible
trans-Sasakian structure. We now consider again on the nonvertical locus
\[
U=\{p:\xi(p)\notin\{\pm E_3(p)\}\}.
\]
and write analogously to \eqref{eq:xi-angles-detailed} and
\eqref{eq:T-angles-detailed}
\begin{equation}
\xi=\sin u\,T+\cos u\,E_3,  \label{eq-xi-pro}
\end{equation}
\begin{equation}
T=\frac{\xi^{\mathrm{hor}}}{\Vert\xi^{\mathrm{hor}}\Vert}=\cos v\,E_1+\sin
v\,E_2, \label{eq-T-pro}
\end{equation}
and complete $\xi$ to the adapted frame
\begin{equation}
e_1=-\sin v\,E_1+\cos v\,E_2,  \label{e1-pro}
\end{equation}
\begin{equation}
e_2=-\cos u\,T+\sin u\,E_3.  \label{e2-pro}
\end{equation}

\begin{lem}\label{lem:product-covariant-derivatives}
With the above notation, define
\begin{equation}
B(X):= {\rm d}v(X)+\omega(X) \label{eq-B(X)-prod}
\end{equation}
for any vector field $X$. Then,
\begin{equation}\label{eq:nabla-T-product-detailed}
\nabla_XT=B(X)e_1, \qquad \nabla_Xe_1=-B(X)T.
\end{equation}
Moreover,
\begin{equation}\label{eq:nabla-xi-product-detailed}
\nabla_X\xi=\sin u\,B(X)e_1-{\rm d}u(X)e_2.
\end{equation}
\end{lem}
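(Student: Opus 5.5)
The proof will mirror Lemma~\ref{lem:adapted-covariant-derivatives}, with $\tau=0$ and $\sigma$ replaced by the connection form $\omega$, which is now not constant. First we compute $\nabla_XT$. Differentiate $T=\cos v\,E_1+\sin v\,E_2$ using \eqref{eq-nabla(X,E1,E2,E3)-pro}:
\[
\nabla_XT=-\sin v\,{\rm d}v(X)E_1+\cos v\,{\rm d}v(X)E_2+\cos v\,\omega(X)E_2-\sin v\,\omega(X)E_1 .
\]
The right-hand side equals $({\rm d}v(X)+\omega(X))(-\sin v\,E_1+\cos v\,E_2)=B(X)e_1$.

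The same computation applied to $e_1=-\sin v\,E_1+\cos v\,E_2$ gives
\[
\nabla_Xe_1=-({\rm d}v(X)+\omega(X))(\cos v\,E_1+\sin v\,E_2)=-B(X)T .
\]
This proves \eqref{eq:nabla-T-product-detailed}. Unlike the $\tau\neq0$ case, no vertical terms appear, because $\nabla_XE_1$ and $\nabla_XE_2$ stay horizontal in the product metric.

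Next we handle $\xi$. Differentiate \eqref{eq-xi-pro} and use $\nabla_XE_3=0$:
\[
\nabla_X\xi={\rm d}u(X)\bigl(\cos u\,T-\sin u\,E_3\bigr)+\sin u\,\nabla_XT .
\]
By \eqref{e2-pro}, $\cos u\,T-\sin u\,E_3=-e_2$; this is the analogue of \eqref{eq:key-sign-detailed}. Substituting this and the formula for $\nabla_XT$ gives \eqref{eq:nabla-xi-product-detailed}.

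The whole argument is a direct computation. The only step needing care is keeping the sign in the identity $\cos u\,T-\sin u\,E_3=-e_2$ and the orientation convention for $e_1$, so that the coefficient comes out as $+B(X)$ rather than $-B(X)$. I do not expect any genuine obstacle.
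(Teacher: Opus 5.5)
Your proposal is correct and follows essentially the same route as the paper. You differentiate $T$ and $e_1$ using the connection form $\omega$ to get $\nabla_XT=B(X)e_1$ and $\nabla_Xe_1=-B(X)T$, then differentiate $\xi=\sin u\,T+\cos u\,E_3$ with $\nabla_XE_3=0$ and use $\cos u\,T-\sin u\,E_3=-e_2$, and your signs all check out.
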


\noindent{\bf Proof.} From (\ref{eq-nabla(X,E1,E2,E3)-pro}), (\ref{eq-T-pro})
and (\ref{e1-pro}), we obtain
\begin{equation*}
\nabla_XT=({\rm d}v(X)+\omega(X))e_1=B(X)e_1,
\end{equation*}
\begin{equation*}
\nabla_Xe_1=-({\rm d}v(X)+\omega(X))T=-B(X)T,
\end{equation*}
which proves (\ref{eq:nabla-T-product-detailed}).
Differentiating (\ref{eq-xi-pro}), we get
\begin{equation}
\nabla_X\xi = \cos u\,{\rm d}u(X)T+\sin u\,B(X)e_1-\sin u\,{\rm d}u(X)E_3.
\label{eq-nabla(X,xi)-pro}
\end{equation}
Using (\ref{e2-pro}) in (\ref{eq-nabla(X,xi)-pro}), we get
(\ref{eq:nabla-xi-product-detailed}). $\blacksquare$

Putting $X=e_1,e_2,\xi$ in (\ref{eq:nabla-xi-product-detailed}), we
respectively obtain
\begin{equation}
\nabla_{e_1}\xi=\sin u\,B(e_1)e_1- {\rm d}u(e_1)e_2,
\label{eq:nabla-e1xi-product-detailed}
\end{equation}
\begin{equation}
\nabla_{e_2}\xi=\sin u\,B(e_2)e_1- {\rm d}u(e_2)e_2,
\label{eq:nabla-e2xi-product-detailed}
\end{equation}
\begin{equation}
\nabla_\xi \xi=\sin u\,B(\xi)e_1- {\rm d}u(\xi)e_2.
\label{eq:nabla-xixi-product-detailed}
\end{equation}
Comparing (\ref{eq:nabla-e1xi-product-detailed}),
(\ref{eq:nabla-e2xi-product-detailed}), and
(\ref{eq:nabla-xixi-product-detailed}) with (\ref{eq:ABCDPQ}), we get
\[
f_{11} = \sin u\,B(e_1), \qquad f_{12} = -{\rm d}u(e_1),
\]
\[
f_{21}=\sin u\,B(e_2), \qquad f_{22} = -{\rm d}u(e_2),
\]
\[
f_{31}=\sin u\,B(\xi), \qquad f_{32} = -{\rm d}u(\xi).
\]
Substituting the above values into \eqref{eq:master-np-formulas} gives
\begin{align*}
\kappa_{\sc np} &=-\frac1{\sqrt2}\bigl(\sin u\,B(\xi)+i{\rm d}u(\xi)\bigr),\\
\sigma_{\sc np} &=-\frac12\Bigl(\sin u\,B(e_1)+{\rm d}u(e_2)-i\bigl(\sin
u\,B(e_2)-{\rm d}u(e_1)\bigr)\Bigr),\\ \rho_{\sc np}&=\frac12\Bigl(\sin
u\,B(e_1)-{\rm d}u(e_2)+i\bigl(\sin u\,B(e_2)+{\rm d}u(e_1)\bigr)\Bigr).
\end{align*}
Hence the trans-Sasakian condition (\ref{eq-tsm-NP}) is equivalent to
\begin{eqnarray}
&{\rm d}u(\xi)=0,& \label{eq:prod1-detailed}\\
&\sin u\,B(\xi)=0,& \label{eq:prod2-detailed}\\
&{\rm d}u(e_2)+\sin u\,B(e_1)=0,& \label{eq:prod3-detailed}\\
&{\rm d}u(e_1)-\sin u\,B(e_2)=0,&\label{eq:prod4-detailed}\\
&\beta = \frac12\Bigl(\sin u\,B(e_1)-{\rm
d}u(e_2)\Bigr),\label{eq:prod5-detailed}\\ &\alpha = \frac12\bigl(\sin
u\,B(e_2)+{\rm d}u(e_1)\bigr).\label{eq:prod6-detailed}
\end{eqnarray}
Now note that since the horizontal component of $\xi=\sin u\,T+\cos u\,E_3$ is
precisely $\sin u\,T$, the open set on which $\xi$ is not vertical is
\begin{equation}
U=\{\sin u\neq0\}=\{p:\xi(p)\notin\{\pm E_3(p)\}\}. \label{eq-U-sinu-prod}
\end{equation}
On $U$, these equations give
\begin{equation}\label{eq:alpha-beta-product-detailed}
\alpha={\rm d}u(e_1)=\sin u\,B(e_2), \qquad \beta=-{\rm d}u(e_2)=\sin
u\,B(e_1), \qquad B(\xi)=0.
\end{equation}

We can now finish the product case. The role played by the constant bracket
terms in the non-product case is now played by the curvature of the horizontal
connection form $B$ defined by (\ref{eq-B(X)-prod}).

\begin{thm}\label{thm:product-final}
Let
\[
M^3=M^2_{\kappa}\times {\Bbb R}
\]
with the product metric, and assume $\kappa\neq0$. Then every trans-Sasakian
structure compatible with the product metric is vertical. More precisely, on
each connected component,
\[
\xi=\pm E_3.
\]
Hence every such structure is cosymplectic.
\end{thm}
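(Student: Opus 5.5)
The plan is to mirror the proof of Theorem~\ref{thm:nonproduct-final}: I will show that the nonvertical locus $U$ of (\ref{eq-U-sinu-prod}) is empty. In the product case the obstruction will not come from constant bracket terms. It will come from the curvature identity (\ref{eq-kappa-theta12-pro}) applied to the $1$-form $B={\rm d}v+\omega$. Since ${\rm d}({\rm d}v)=0$, we have ${\rm d}B={\rm d}\omega=-\kappa\,\theta^1\wedge\theta^2$ on $U$. I will evaluate both sides on the pair $(e_1,\xi)$.

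\textbf{Right-hand side.} The pair $(T,e_1)$ is a positively oriented horizontal orthonormal pair by (\ref{eq-T-pro}) and (\ref{e1-pro}). Since $\theta^1\wedge\theta^2$ annihilates $E_3$, we get $\theta^1\wedge\theta^2(e_1,\xi)=\sin u\,\theta^1\wedge\theta^2(e_1,T)=-\sin u$. Hence ${\rm d}B(e_1,\xi)=\kappa\sin u$.

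\textbf{Left-hand side: brackets.} I will use ${\rm d}B(e_1,\xi)=e_1(B(\xi))-\xi(B(e_1))-B([e_1,\xi])$ together with the relations (\ref{eq:alpha-beta-product-detailed}): $B(\xi)=0$, $B(e_1)=\beta/\sin u$, $B(e_2)=\alpha/\sin u$, ${\rm d}u(\xi)=0$, ${\rm d}u(e_1)=\alpha$, ${\rm d}u(e_2)=-\beta$.
\begin{itemize}
\item From Lemma~\ref{lem:product-covariant-derivatives}, $\nabla_\xi e_1=-B(\xi)T=0$ and $\nabla_{e_1}\xi=\beta e_1-\alpha e_2$. Hence $[e_1,\xi]=\beta e_1-\alpha e_2$.
\item Differentiating (\ref{e2-pro}) along $\xi$ gives $\nabla_\xi e_2=\sin u\,{\rm d}u(\xi)T-\cos u\,B(\xi)e_1+\cos u\,{\rm d}u(\xi)E_3=0$. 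Hence $[e_2,\xi]=\nabla_{e_2}\xi=\alpha e_1+\beta e_2$.
\end{itemize}

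\textbf{Evolution of $\beta$.} As in Lemma~\ref{lem:bracket-alpha-beta}, I apply the bracket $[e_2,\xi]$ to the function $u$. Using ${\rm d}u(\xi)=0$, the left side is $[e_2,\xi](u)=-\xi({\rm d}u(e_2))=\xi(\beta)$. The right side is $\alpha\,{\rm d}u(e_1)+\beta\,{\rm d}u(e_2)=\alpha^2-\beta^2$. Hence $\xi(\beta)=\alpha^2-\beta^2$. Similarly $\xi(\sin u)=\cos u\,{\rm d}u(\xi)=0$.

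\textbf{Conclusion.} Substituting, I expect
\[
{\rm d}B(e_1,\xi)=-\frac{\alpha^2-\beta^2}{\sin u}-\Bigl(\beta\cdot\frac{\beta}{\sin u}-\alpha\cdot\frac{\alpha}{\sin u}\Bigr)=0 .
\]
Comparing the two sides gives $\kappa\sin u=0$ on $U$. Since $\kappa\neq0$, this forces $\sin u=0$, contradicting the definition of $U$. Hence $U=\varnothing$, and $\xi=\pm E_3$ on each connected component. Then $\nabla\xi=0$ by (\ref{eq-nabla(X,E1,E2,E3)-pro}), so $\rho_{\sc np}=0$, and the structure is cosymplectic by Remark~\ref{rem-tS-cases} and Theorem~\ref{th-cosym-equi}.

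\textbf{Main obstacle.} The delicate point is the bookkeeping of signs: the orientation of $\theta^1\wedge\theta^2$ on $(e_1,T)$, and the sign convention relating ${\rm d}\omega$ to $\kappa$ in (\ref{eq-kappa-theta12-pro}). An error there would not break the argument, since any nonzero multiple of $\kappa\sin u$ still yields the contradiction. The vanishing of the left-hand side is what really matters. As a fallback, I would also compute ${\rm d}B(e_2,\xi)$ and ${\rm d}B(e_1,e_2)$ as consistency checks.
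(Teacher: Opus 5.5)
Your proposal is correct and follows the paper's proof essentially step for step. The paper also evaluates ${\rm d}B={\rm d}\omega=-\kappa\,\theta^1\wedge\theta^2$ on $(e_1,\xi)$, obtains $[e_1,\xi]=\beta e_1-\alpha e_2$, $[e_2,\xi]=\alpha e_1+\beta e_2$ and $\xi(\beta)=\alpha^2-\beta^2$ from $[e_2,\xi](u)$, shows the bracket side vanishes, and compares it with $\kappa\sin u$; the only differences are cosmetic (you get $\nabla_\xi e_2=0$ by differentiating $e_2$ directly rather than by orthonormality, and you drop the factor $2$ in the exterior-derivative convention, which is harmless since that side is zero).
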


\noindent{\bf Proof.} As in the non-product case it is enough to show that the
nonvertical locus $U$ defined by (\ref{eq-U-sinu-prod}) is empty.
From \eqref{eq:nabla-xi-product-detailed}, and
\eqref{eq:alpha-beta-product-detailed}, we have
\begin{equation}
\nabla_{e_1}\xi=\beta e_1-\alpha e_2, \qquad \nabla_{e_2}\xi=\alpha e_1+\beta
e_2. \label{eq-nabla(e1,xi)-prod}
\end{equation}
Also, by \eqref{eq:nabla-T-product-detailed},
\begin{equation}
\nabla_\xi e_1=-B(\xi)T=0 \label{eq-nabla(xi,e1)-prod}
\end{equation}
on $U$. Since $\nabla_\xi\xi=0$, orthonormality of $(e_1,e_2,\xi)$ gives
\begin{equation}
\nabla_\xi e_2=0. \label{eq-nabla(xi,e2)}
\end{equation}
Therefore from (\ref{eq-nabla(e1,xi)-prod}), (\ref{eq-nabla(xi,e1)-prod}) and
(\ref{eq-nabla(xi,e2)}), we get
\begin{equation}
[e_1,\xi]=\beta e_1-\alpha e_2, \qquad [e_2,\xi]=\alpha e_1+\beta e_2.
\label{eq:product-brackets}
\end{equation}
Hence
\begin{equation}
[e_1,\xi](u) = \beta {\rm d}u(e_1)-\alpha {\rm d}u(e_2) = 2\alpha\beta.
\label{eq:product-brackets-1}
\end{equation}
But on the other hand, using (\ref{eq:prod1-detailed}), we have
\begin{equation}
[e_1,\xi](u) = e_1({\rm d}u(\xi))-\xi({\rm d}u(e_1)) = -{\rm d}\alpha(\xi).
\label{eq:product-brackets-2}
\end{equation}
By comparing (\ref{eq:product-brackets-1}) and (\ref{eq:product-brackets-2}),
we get \begin{equation}\label{eq:product-xi-alpha} {\rm d}\alpha(\xi) =
-2\alpha\beta.
\end{equation}
Similarly,
\[
[e_2,\xi](u)=-\xi({\rm d}u(e_2)) = {\rm d}\beta(\xi),
\]
while
\[
[e_2,\xi](u) = \alpha {\rm d}u(e_1)+\beta {\rm d}u(e_2) = \alpha^2-\beta^2.
\]
Hence
\begin{equation}\label{eq:product-xi-beta}
{\rm d}\beta(\xi) = \alpha^2-\beta^2.
\end{equation}
Using (\ref{eq:alpha-beta-product-detailed}), we compute
\begin{equation}
2dB(e_1,\xi) =e_1(B(\xi))-\xi(B(e_1))-B([e_1,\xi]) = - \frac{{\rm
d}\beta(\xi)}{\sin u} -\frac{\beta^2-\alpha^2}{\sin u}.
\label{eq-dB-domega(e1,xi)}
\end{equation}
Using \eqref{eq:product-xi-beta} in (\ref{eq-dB-domega(e1,xi)}), we obtain
\begin{equation}
dB(e_1,\xi)=0. \label{dB(e1,xi)}
\end{equation}
On the other hand, from (\ref{eq-B(X)-prod}), we have
\begin{equation}
dB=d\omega=-\kappa\,\theta^1\wedge\theta^2. \label{eq-dB-domega}
\end{equation}
Also,
\begin{equation}
dB(e_1,\xi) = -\kappa\,\theta^1\wedge\theta^2(e_1,\xi). \label{dB(e1,xi)-1}
\end{equation}

The form $\theta^1\wedge\theta^2$ is the area form of the horizontal factor,
with respect to the oriented frame $(E_1,E_2)$. It therefore only sees the
horizontal part of its arguments. Since $\xi=\sin u\,T+\cos u\,E_3$ and $E_3$
is vertical, we have
\begin{equation}
\theta^1\wedge\theta^2(e_1,\xi) = \sin u\,\theta^1\wedge\theta^2(e_1,T).
\label{eq-theta1-wedge-theta2(e1,xi)}
\end{equation}
Now, from (\ref{eq-T-pro}) and (\ref{e1-pro}), we have
\begin{equation}\label{eq-theta1-wedge-theta2(e1,T)}
\theta^1\wedge\theta^2(e_1,T) = \det
\begin{pmatrix}
-\sin v & \cos v\\
\cos v & \sin v
\end{pmatrix}
=-1.
\end{equation}
Hence from (\ref{eq-theta1-wedge-theta2(e1,xi)}) and
(\ref{eq-theta1-wedge-theta2(e1,T)}), we get
\begin{equation}
\theta^1\wedge\theta^2(e_1,\xi)=-\sin u.
\label{eq-theta1-wedge-theta2(e1,xi)-1}
\end{equation}
In view of (\ref{eq-theta1-wedge-theta2(e1,xi)-1}), (\ref{dB(e1,xi)-1}) becomes
\begin{equation}
dB(e_1,\xi)=\kappa\sin u. \label{dB(e1,xi)-2}
\end{equation}
Comparing (\ref{dB(e1,xi)}) and (\ref{dB(e1,xi)-2}), we get
\begin{equation}
\kappa\sin u=0.
\end{equation}
Since $\kappa\neq0$, this forces $\sin u=0$, contradicting the definition of
$U$. Hence $U=\varnothing$, and so $\xi$ is vertical everywhere. Since $\xi$ is
a unit vector field, this means
\[
\xi=\pm E_3
\]
on each connected component. Finally, for the vertical field $E_3=\partial_t$
in the product metric, one has $\nabla E_3=0$. Thus the corresponding
compatible trans-Sasakian structure has
\[
\alpha=0,\qquad \beta=0,
\]
and is cosymplectic.
$\blacksquare$

The novelty of the result is that the rigidity is not a consequence of assuming
the almost contact structure to be homogeneous. Indeed, the structure vector
field $\xi$ is allowed to be an arbitrary smooth unit vector field. The proof
shows that the trans-Sasakian equations themselves, together with the curvature
of the canonical fibration of ${\Bbb E}(\kappa,\tau)$, force the non-vertical
component of $\xi$ to vanish.

\begin{rem}
Note that the flat geometry ${\mathbb E}(0,0)=({\Bbb R}^3,g_0)$ behaves
differently from the non-flat ${\mathbb E}(\kappa,\tau)$-geometries.  In the
latter cases, the Newman--Penrose equations force the structure vector field to
coincide with the vertical field of the canonical fibration.  In the flat case,
this rigidity fails locally.

Indeed, on punctured Euclidean space ${\Bbb R}^3\setminus\{0\}$, the radial
unit vector field
\[
\xi=\frac{x}{r}\partial_x+\frac{y}{r}\partial_y+\frac{z}{r}\partial_z, \qquad
r=\sqrt{x^2+y^2+z^2},
\]
satisfies
\[
\nabla_X\xi=\frac1r\bigl(X-\eta(X)\xi\bigr),
\]
and therefore
\[
\kappa_{\sc np}=0,\qquad
\sigma_{\sc np}=0,\qquad
\rho_{\sc np}=\frac1r.
\]
Hence the associated almost contact metric structure is trans-Sasakian of type
$(\alpha,\beta)=\left(0,\frac1r\right)$. Thus the flat exception is genuine
locally: there exist non-cosymplectic trans-Sasakian structures compatible with
the flat metric. Globally, however, the Newman--Penrose equations become much
more rigid. In the flat case, the Sachs equation reduces to
\[
\xi(\rho_{\sc np})=-\rho_{\sc np}^2.
\]
For a complete regular congruence on all of ${\Bbb R}^3$, any non-zero solution
develops a singularity in finite parameter time along the geodesics of $\xi$.
Consequently, a smooth globally defined trans-Sasakian structure on Euclidean
space must satisfy $\rho_{\sc np}=0$. Equivalently, $\alpha=0$, and $\beta=0,$
so the structure is cosymplectic.

Thus the flat model behaves differently only at the local level. On the
complete Euclidean space, the only regular global trans-Sasakian structures are
the parallel ones.  This agrees with the Baird--Wood classification of regular
conformal foliations by geodesics in $3$-dimensional Euclidean space
{\rm\cite[Baird and Wood 1991]{Baird-Wood-91-JAMS}}, but here it appears
naturally from the Newman--Penrose equations themselves.
\end{rem}

\begin{rem}
It is worth mentioning that among the Thurston geometries, the Sol geometry
behaves in a fundamentally different way from the other cases studied above.
Indeed, consider the standard Sol metric
\[
g=e^{2z}dx^2+e^{-2z}dy^2+dz^2
\]
with orthonormal frame
\[
E_1=e^{-z}\partial_x,\qquad E_2=e^z\partial_y,\qquad E_3=\partial_z .
\]
For this metric the Ricci tensor is
\[
S=-2\,\theta^3\otimes\theta^3,
\]
where $\theta^3$ is dual to $E_3$.

Suppose that a trans-Sasakian structure exists on some open set. Then
$\kappa_{\sc np}=0$, $\sigma_{\sc np}=0$. Substituting these conditions into
the first generalized Sachs equation gives
\[
S(\partial,\partial)=0.
\]
Writing
\[
p=g(E_3,e_1),\qquad q=g(E_3,e_2),
\]
we have
\[
g(E_3,\partial)=\frac1{\sqrt2}(p-iq),
\]
and hence
\[
S(\partial,\partial)=-2g(E_3,\partial)^2=-(p-iq)^2.
\]
Thus $p=q=0$, so $E_3$ is orthogonal to $e_1,e_2$. Therefore $E_3$ is parallel
to $\xi$, and since both vector fields have unit length, $\xi=\pm E_3$ on each
connected component.

It remains only to check the vertical field.  If $\xi=E_3$, then with
\[
\partial=\frac1{\sqrt2}(E_1-iE_2)
\]
and using
\[
\nabla_{E_1}E_3=E_1,\qquad \nabla_{E_2}E_3=-E_2,
\]
we get
\[
\nabla_\partial\xi = \frac1{\sqrt2}(E_1+iE_2) = \overline{\partial}.
\]
Consequently
\[
\sigma_{\sc np} = -g(\partial,\nabla_\partial\xi) =
-g(\partial,\overline{\partial}) = -1,
\]
contradicting $\sigma_{\sc np}=0$. The case $\xi=-E_3$ is the same, up to sign.
Hence the standard Sol metric admits no local compatible trans-Sasakian
structure.

What is striking in this argument is that the obstruction appears directly at
the level of the Newman--Penrose equations. The combination of the shear-free
condition with a single Sachs equation immediately detects the incompatibility
between the Sol Ricci tensor and the trans-Sasakian equations. In particular,
the Newman--Penrose formalism turns the problem into a very short rigidity
argument expressed entirely through scalar quantities associated with the
congruence.
\end{rem}

\begin{rem}
The remaining space-form cases, which complete the list of Thurston geometries,
are not treated separately here, since they do not exhibit the rigidity
phenomenon considered in this section. In the family ${\Bbb E}(\kappa,\tau)$,
the round sphere corresponds to $\kappa=4\tau^2$; in this case the Ricci tensor
is isotropic and the Newman--Penrose equations no longer force the structure
vector field to be vertical. Hyperbolic space ${\Bbb H}^3$ also falls outside
the vertical-fibration mechanism used above.  A Newman--Penrose study of these
space-form cases, and more generally of warped product geometries, would be
natural: there the method would not primarily give vertical rigidity, but
rather an efficient way to analyse shear-free geodesic congruences and the
corresponding trans-Sasakian structures.
\end{rem}
\vspace{3mm}

\noindent {\bf Acknowledgement.} The first author gratefully acknowledges the
Department of Science and Technology (DST), Government of India, for granting
the INSPIRE Fellowship [DST/INSPIRE/03/2022/002296], inspire code IF210729. The
third author is thankful to the Banaras Hindu University for the ``Incentive
Grant" under the IoE Scheme (BHU), Ref.\ No.\ R/Dev/D/IoE/Incentive
(Phase-IV)/2024-25/82489.

\end{document}